\theoremstyle{theorem}
	\newtheorem{thm}{Theorem}[section]
	\newtheorem{lem}[thm]{Lemma}
	\newtheorem{cor}[thm]{Corollary}
	\newtheorem{prop}[thm]{Proposition}
	\newtheorem*{theorem*}{Theorem}
        	 \newtheorem*{prop*}{Proposition}	
\theoremstyle{definition}
\theoremstyle{remark}
	\newtheorem{remark}[thm]{Remark}
\newcommand{\F}{\mathbb{F}}
\newcommand{\N}{\mathbb{N}}
\newcommand{\Z}{\mathbb{Z}}
\newcommand{\p}{\mathcal{P}}
\renewcommand{\epsilon}{\varepsilon}
\renewcommand{\phi}{\varphi}
\newcommand{\ac}{\curvearrowright}
\newcommand{\Aut}{{ \rm Aut}}
\newcommand{\Out}{{\rm Out}}
\newcommand{\ul}[1]{\undertilde{#1}}
\newcommand{\ol}[1]{\overline{#1}}
\newcommand{\set}[1]{ \left\lbrace #1\right\rbrace}
\newcommand{\FR}{{\rm Fr}_+(\phi,\psi)}
\newcommand{\cind}{ {\rm CIND}}
\newcommand{\type}{ {\rm type}}
\newcommand{\sub}{ {\rm Sub}}
\newcommand{\fix}{ {\rm Fix}}
\newcommand{\stab}{ {\rm stab}}
\newcommand{\malg}{ {\rm MALG}_\mu}
\newcommand{\core}{ {\rm core}}
\newcommand{\irs}{ {\rm IRS}}
\newcommand{\act}{ \text{A}}
\begin{document}
\title{\bf Co-induction and Invariant Random Subgroups}
\author{Alexander S. Kechris and Vibeke Quorning}
\maketitle

\begin{abstract}
In this paper we develop a co-induction operation which transforms an invariant random subgroup of a group into an invariant random subgroup of a larger group. 

We use this operation to construct new continuum size families of non-atomic, weakly mixing invariant random subgroups of certain classes of wreath products, HNN-extensions and free products with amalgamation. By use of small cancellation theory, we also construct a new continuum size family of non-atomic invariant random subgroups of $\F_2$ which are all invariant and weakly mixing with respect to the action of $\text{Aut}(\F_2)$. 

Moreover, for amenable groups $\Gamma\leq \Delta$, we obtain that the standard co-induction operation from the space of weak equivalence classes of $\Gamma$ to the space of weak equivalence classes of $\Delta$ is continuous if and only if $[\Delta :\Gamma]<\infty$ or $\core_\Delta(\Gamma)$ is trivial. For general groups we obtain that the co-induction operation is not continuous when $[\Delta:\Gamma]=\infty$. This answers a question raised by Burton and Kechris in \cite{BK18}. Independently such an answer was also obtained, using a different method, by Bernshteyn in \cite{B18}. 
\end{abstract}

\section{Introduction}

{\bf (A)} Let $\Gamma$ be a countable (discrete) group. We denote by ${\rm Sub}(\Gamma)$ the space of subgroups of $\Gamma$. It is a closed subset of the space $2^\Gamma$, so it is compact metrizable. The group $\Gamma$ acts continuously by conjugation on ${\rm Sub}(\Gamma)$ and a probability Borel measure on ${\rm Sub}(\Gamma)$ invariant under this action is called an {\bf invariant random subgroup (IRS)}. We denote by ${\rm IRS}(\Gamma)$ the space of invariant random subgroups for $\Gamma$, which, viewed as a subset of the space of probability Borel measures on ${\rm Sub}(\Gamma)$, with the usual weak$^*$-topology, is a compact metrizable space, in fact a Choquet simplex (see, e.g., \cite[page 95]{G03}). For example, if $N$ is a normal subgroup of $\Gamma$, then the Dirac measure $\delta_N$ is an IRS, and one can think of an IRS as a random version of the notion of normal subgroup.

The study of invariant random subgroups on various classes of groups has been an active area of research in the last several years, see, for example, \cite{BGK17} and the references contained therein, as well as \cite{Bo14}, \cite{TT-D14}, \cite{LM15}, \cite{Ge15}, \cite{BGN15}, \cite{O15}, \cite{LM15}, \cite{GL16}, \cite{HT16}, \cite{EG16}, \cite{BDLW16}, \cite{G17}, \cite{BBT17}, \cite{BT17}, \cite{DM17}, \cite{HY17}, \cite{Ge18}, \cite{BT18}, \cite{TT-D18}, \cite{BLT18}, \cite{GeL18}. One usually concentrates on the study of ergodic (with respect to the conjugacy action) invariant random subgroups, which are the extreme points of the Choquet simplex ${\rm IRS}(\Gamma)$. Among those are the atomic, ergodic invariant random subgroups, which are given by the uniform measure on the set of conjugates of a subgroup of $\Gamma$ that has finitely many conjugates. So we will naturally focus on the non-atomic, ergodic invariant random subgroups. There are groups for which there are continuum many such invariant random subgroups (e.g., the free non-abelian groups, see \cite{Bo15}) and others that have no such invariant random subgroups (e.g., lattices in simple higher rank Lie groups, see \cite{SZ94}). For more examples of both types, see the introduction of \cite{BGK17}. In the present paper, we will develop a method for constructing continuum many non-atomic, ergodic invariant random subgroups for certain classes of groups. In fact this method produces non-atomic, weakly mixing (for the conjugacy action) invariant random subgroups. We note here that weakly mixing is the strongest notion of mixing that a non-atomic IRS can have. This follows from the result of Tucker-Drob \cite{T-D15a}, which implies that any totally ergodic IRS must be atomic (recall here that a probability measure preserving action of a countable group is totally ergodic if every infinite subgroup acts ergodically; mixing or even mildly mixing actions are totally ergodic). 


\medskip
{\bf (B)} To explain our method, fix a standard (non-atomic) probability space $(X,\mu)$ and let $A(\Gamma, X, \mu)$ be the Polish space of measure preserving Borel actions of $\Gamma$ on $(X,\mu)$ with the usual weak topology (see, e.g., \cite{K10}). For each $a\in A(\Gamma, X, \mu), x\in X$, let ${\rm stab}_a (x)$ be the stabilizer of $x$. Then ${\stab}_a\colon X\to {\rm Sub} (\Gamma)$ is $\Gamma$-equivariant, so $\theta = ({\stab}_a)_*\mu\in {\rm IRS}(\Gamma)$. This IRS is called the {\bf type} of $a$, in symbols ${\rm type}(a)$. It was shown in \cite{AGV14} that every $\theta\in {\rm IRS}(\Gamma)$ is of the form ${\type}(a)$ for some action $a$.

If the group $\Gamma$ is contained in a countable group $\Delta$, $\Gamma\leq \Delta$, there is a canonical method of ``extending'' an action $a\in A(\Gamma, X, \mu)$ to an action $b\in A(\Delta, X^{\Delta/\Gamma}, \mu^{\Delta/\Gamma})$, where $\Delta/\Gamma$ is the set of left cosets of $\Gamma$ in $\Delta$. The action $b$ is called the {\bf co-induced} action of $a$ in symbols ${\rm CIND}^\Delta_\Gamma(a)$. For the basic properties of the co-induced action, see \cite{I11}, \cite{K10}. Our method for constructing invariant random subgroups is based on defining a notion of co-induction for invariant random subgroups. To formulate the precise statement, we use the following notation. If $F\subseteq \Gamma$ is finite, we let $N^\Gamma_F = \{\Lambda\in {\rm Sub}(\Gamma)\colon F\subseteq \Lambda\}$, be the ``positive'' basic open set given by $F$. It is not hard to see any $\theta \in {\rm IRS}(\Gamma)$ is completely determined by its values on these basic open sets. Also we let ${\rm core}_\Delta(\Gamma) = \bigcap_{\delta\in \Delta} \delta\Gamma\delta^{-1}$ be the {\bf normal core} of $\Gamma$ in $\Delta$. We now have:

\begin{thm}
Let $\Gamma\leq \Delta$ and $T\subseteq \Delta$ a transversal for the left cosets in $\Delta/\Gamma$. There exists a co-induction operation $\cind_\Gamma^\Delta\colon \irs(\Gamma)\rightarrow\irs(\Delta)$ such that for finite $F\subseteq \Delta$, 
\begin{equation*}
\cind_\Gamma^\Delta(\theta)(N_F^\Delta)=\begin{cases}
0 \qquad &\text{if} \quad F\nsubseteq\core_\Delta(\Gamma),\\
\prod_{t\in T}\theta(N_{t^{-1}Ft}^\Gamma) \qquad &\text{if} \quad F\subseteq \core_\Delta(\Gamma),
\end{cases}
\end{equation*}
 and
$\cind_\Gamma^\Delta(\type(a))=\type(\cind_\Gamma^\Delta(a))$ for all $a\in \text{A}(\Gamma,X,\mu)$. In particular, if
$\type(a)=\type(b)$, then $ \type(\cind_\Gamma^\Delta(a))=\type(\cind_\Gamma^\Delta(b))$. Also $\cind_\Gamma^\Delta(\theta)$ concentrates on ${\rm Sub}(\core_\Delta(\Gamma))$, for any $\theta \in {\rm IRS}(\Gamma)$.
\end{thm}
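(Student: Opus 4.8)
The plan is to \emph{define} $\cind_\Gamma^\Delta$ through co-induced actions and to derive the displayed formula — and, from it, well-definedness — by a direct stabilizer computation. Given $\theta \in \irs(\Gamma)$, use the realization theorem \cite{AGV14} to pick some $a \in \act(\Gamma, X, \mu)$ with $\type(a) = \theta$, form $b := \cind_\Gamma^\Delta(a) \in \act(\Delta, X^{\Delta/\Gamma}, \mu^{\Delta/\Gamma})$, and set $\cind_\Gamma^\Delta(\theta) := \type(b)$. This lies in $\irs(\Delta)$ because the type of a p.m.p.\ action always does, and the identity $\cind_\Gamma^\Delta(\type(a)) = \type(\cind_\Gamma^\Delta(a))$ then holds by construction. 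Everything else reduces to proving that, for \emph{any} $a$ with $\type(a) = \theta$ and any finite $F \subseteq \Delta$, the value $\type(\cind_\Gamma^\Delta(a))(N_F^\Delta)$ is given by the displayed expression: that expression depends only on $\theta$, and an IRS is determined by its values on the sets $N_F^\Delta$ (as noted in the introduction), so $\type(\cind_\Gamma^\Delta(a))$ will not depend on the choice of $a$, which is precisely the well-definedness of $\cind_\Gamma^\Delta$ together with the ``in particular'' clause. (It also follows that the output is independent of the transversal $T$, since conjugation-invariance of $\theta$ makes $\prod_{t \in T}\theta(N_{t^{-1}Ft}^\Gamma)$ transversal-independent when $F \subseteq \core_\Delta(\Gamma)$.)

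The crux is therefore the stabilizer computation for $b$. Identify $X^{\Delta/\Gamma}$ with $X^T$ carrying the product measure $\mu^T$, and write $b$ in coordinates: for $\delta \in \Delta$ and $t \in T$, writing $\delta^{-1}t = s\gamma$ with $s \in T$ and $\gamma \in \Gamma$, one has $(\delta \cdot y)_t = \gamma^{-1}\cdot_a y_s$. Two observations finish the proof. If $\delta_0 \in \core_\Delta(\Gamma)$ then, since $\core_\Delta(\Gamma)$ is a subgroup contained in every $t\Gamma t^{-1}$, the decomposition of $\delta_0^{-1}t$ is $s = t$, $\gamma = t^{-1}\delta_0^{-1}t \in \Gamma$, so $\delta_0 \in \stab_b(y)$ if and only if $t^{-1}\delta_0 t \in \stab_a(y_t)$ for every $t \in T$; hence, for finite $F \subseteq \core_\Delta(\Gamma)$, by independence of the coordinates and $\theta = (\stab_a)_*\mu$,
\[
\type(b)(N_F^\Delta) = \mu^T\big(\{y : t^{-1}Ft \subseteq \stab_a(y_t)\ \forall t\in T\}\big) = \prod_{t \in T}\mu\big(\{x : t^{-1}Ft \subseteq \stab_a(x)\}\big) = \prod_{t\in T}\theta(N_{t^{-1}Ft}^\Gamma).
\]
On the other hand, if $\delta_0 \notin \core_\Delta(\Gamma) = \bigcap_{\delta\in\Delta}\delta\Gamma\delta^{-1}$, then $\delta_0$ moves some left coset, so there are distinct $t_1, t_2 \in T$ and $\gamma \in \Gamma$ with $\delta_0^{-1}t_1 = t_2\gamma$; then $\delta_0 \in \stab_b(y)$ forces the relation $y_{t_1} = \gamma^{-1}\cdot_a y_{t_2}$ between two distinct, hence independent, coordinates, and by Fubini together with the non-atomicity of $\mu$ the set of such $y$ is $\mu^T$-null. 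Thus $\type(b)(N_F^\Delta) = 0$ whenever $F \not\subseteq \core_\Delta(\Gamma)$, and, summing over the countably many $\delta_0 \in \Delta\setminus\core_\Delta(\Gamma)$, $\type(b)$ concentrates on $\{\Lambda \in \sub(\Delta) : \Lambda \subseteq \core_\Delta(\Gamma)\} = \sub(\core_\Delta(\Gamma))$.

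The remaining points are routine and I would only check them briefly: that the coordinate description above really is $\cind_\Gamma^\Delta(a)$ with the correct cocycle conventions (a standard but sign-sensitive verification), that $\{\Lambda : \Lambda \subseteq \core_\Delta(\Gamma)\}$ is Borel in $\sub(\Delta)$ — it is in fact closed, being an intersection of the clopen sets $\{\Lambda : \delta_0 \notin \Lambda\}$ — and that $t^{-1}Ft$ is a finite subset of $\Gamma$ when $F \subseteq \core_\Delta(\Gamma)$, so that $N_{t^{-1}Ft}^\Gamma$ is defined. The one spot requiring genuine care is the non-core case: one must extract from ``$\delta_0$ moves a coset'' exactly one bad coordinate relation involving two \emph{different} coordinates, so that non-atomicity can be applied through a Fubini argument in those two coordinates; the core case and the cocycle bookkeeping are otherwise purely formal.
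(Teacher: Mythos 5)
Your proposal is correct and follows essentially the same route as the paper: both define $\cind_\Gamma^\Delta(\theta)$ as $\type(\cind_\Gamma^\Delta(a))$ for any $a$ realizing $\theta$, kill the stabilizer of $\delta_0\notin\core_\Delta(\Gamma)$ by the same two-coordinate Fubini/non-atomicity argument, and obtain the product formula for $F\subseteq\core_\Delta(\Gamma)$ from independence of the coordinates, with well-definedness then following since the formula depends only on $\theta$ and an IRS is determined by its values on the sets $N_F^\Delta$.
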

Although the expression above uses the transversal $T$, it is not hard to see that it is independent of the choice of $T$.


It is known, see \cite{I11}, that when the index of $\Gamma$ in $\Delta$ is infinite, then for any action $a$ the co-induced action $\cind_\Gamma^\Delta(a)$ is weakly mixing. Thus we have:
\begin{prop}
Let $\Gamma\leq \Delta$ with $[\Delta : \Gamma]=\infty$. Then $\cind_\Gamma^\Delta(\theta)\in \irs(\Delta)$ is weakly mixing, for any $\theta\in \irs(\Gamma)$.
\end{prop}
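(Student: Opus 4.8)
The plan is to reduce the statement about invariant random subgroups to the corresponding, already known, statement about co-induced \emph{actions}, using the compatibility clause of the Theorem above together with the realization theorem for invariant random subgroups and the stability of weak mixing under passage to factors.

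First I would fix $\theta\in\irs(\Gamma)$ and, invoking the theorem of Abért--Glasner--Virág \cite{AGV14}, choose an action $a\in\act(\Gamma,X,\mu)$ with $\type(a)=\theta$. By the Theorem above we then have
\[
\cind_\Gamma^\Delta(\theta)=\cind_\Gamma^\Delta(\type(a))=\type(b),\qquad b:=\cind_\Gamma^\Delta(a)\in\act(\Delta,X^{\Delta/\Gamma},\mu^{\Delta/\Gamma}).
\]
Next, since $[\Delta:\Gamma]=\infty$, the result of Ioana \cite{I11} guarantees that the probability measure preserving action $b$ is weakly mixing. It remains to transfer weak mixing of $b$ to weak mixing of its type. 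By definition $\type(b)=(\stab_b)_*\mu^{\Delta/\Gamma}$, and $\stab_b\colon X^{\Delta/\Gamma}\to\sub(\Delta)$ is $\Delta$-equivariant, intertwining $b$ with the conjugation action of $\Delta$ on $\sub(\Delta)$. Hence the conjugation action of $\Delta$ on $(\sub(\Delta),\cind_\Gamma^\Delta(\theta))$ is a factor of the weakly mixing action $b$. Since weak mixing is inherited by factors (for instance because the Koopman representation of a factor on its $L^2_0$ embeds into that of the ambient system, so the absence of nontrivial finite-dimensional subrepresentations is preserved), we conclude that $\cind_\Gamma^\Delta(\theta)$ is weakly mixing.

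I do not expect a genuine obstacle here: the substantive content is packaged in the Theorem (compatibility of co-induction on $\irs(\Gamma)$ with co-induction on actions) and in Ioana's theorem on weak mixing of co-induced actions. The only points requiring care are bookkeeping ones: recording that ``weak mixing of an invariant random subgroup'' means weak mixing of the associated conjugation action, and that this property descends along the equivariant stabilizer factor map; both are routine.
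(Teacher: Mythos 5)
Your proposal is correct and follows essentially the same route as the paper: realize $\theta$ as $\type(a)$ via Ab\'{e}rt--Glasner--Vir\'{a}g, apply Ioana's weak mixing result to $\cind_\Gamma^\Delta(a)$, and push weak mixing down to $\cind_\Gamma^\Delta(\theta)=\type(\cind_\Gamma^\Delta(a))$ through the equivariant stabilizer map. The paper's proof is just a terser version of this argument, leaving the factor-map step implicit.
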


One can also characterize the non-atomicity of the co-induced IRS. Below for each $\theta\in {\rm IRS}(\Gamma)$, let the {\bf kernel} of $\theta$, in symbols ${\rm ker}(\theta)$, be the subgroup of $\Gamma$ given by ${\rm ker}(\theta) = \{\gamma \in \Gamma\colon \theta(N^\Gamma_{\gamma }) =1\}$, where $N^\Gamma_{\gamma } =   N^\Gamma_{\{\gamma\} }$.

\begin{prop}
Let $\Gamma\leq \Delta$ with $[\Delta :\Gamma]=\infty$ and $\theta\in \irs(\Gamma)$.

(i)  If $\cind_\Gamma^\Delta(\theta)$ is atomic, then $\cind_\Gamma^\Delta(\theta)=\delta_{\core_\Delta(\ker(\theta))}$.

(ii) Let $T\subseteq \Delta$ be a transversal for the left cosets in $\Delta/\Gamma$. Then $\cind_\Gamma^\Delta(\theta)$ is non-atomic if and only if there is $\gamma\in \core_\Delta(\Gamma)\setminus \core_\Delta(\ker(\theta))$ such that
\[\sum_{t\in T}\left(1-\theta(N_{t^{-1}\gamma t}^\Gamma)\right)<\infty\]
and $\theta(N_{t^{-1}\gamma t}^\Gamma)>0$ for all $t\in T$.

\end{prop}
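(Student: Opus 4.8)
Throughout write $\nu=\cind_\Gamma^\Delta(\theta)$, $N=\core_\Delta(\Gamma)$ and $K=\core_\Delta(\ker\theta)$. The whole argument rests on inputs already available: the explicit values of $\nu$ on positive basic open sets from the co-induction theorem above --- in particular $\nu$ concentrates on $\sub(N)$, $\nu(N_\gamma^\Delta)=0$ for $\gamma\in\Delta\setminus N$, and $\nu(N_\gamma^\Delta)=\prod_{t\in T}\theta(N_{t^{-1}\gamma t}^\Gamma)$ for $\gamma\in N$ --- together with the fact that an element of $\irs(\Delta)$ is determined by its values on the sets $N_F^\Delta$, and the weak mixing of $\nu$ from the preceding proposition (valid since $[\Delta:\Gamma]=\infty$).

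First I would record two bookkeeping facts. Since $\theta$ is conjugation-invariant, $\ker\theta$ is normal in $\Gamma$; writing a general $\delta\in\Delta$ as $t\gamma'$ with $t\in T$, $\gamma'\in\Gamma$ gives $\delta(\ker\theta)\delta^{-1}=t(\ker\theta)t^{-1}$, so $K=\bigcap_{t\in T}t(\ker\theta)t^{-1}$, and since $\ker\theta\leq\Gamma$ also $K\leq N$. Consequently, for any $\gamma\in\Delta$, one has $\nu(N_\gamma^\Delta)=1$ iff $\gamma\in N$ and every factor $\theta(N_{t^{-1}\gamma t}^\Gamma)$ equals $1$, i.e.\ iff $t^{-1}\gamma t\in\ker\theta$ for all $t\in T$, i.e.\ iff $\gamma\in K$; in other words $\{\gamma:\nu(N_\gamma^\Delta)=1\}=K$. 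Next, taking the countable intersection $\bigcap_{\gamma\in K}N_\gamma^\Delta$ of $\nu$-conull sets shows that $\nu$ concentrates on $\{\Lambda\in\sub(N):K\leq\Lambda\}$; since moreover $\{\Lambda\in\sub(N):\Lambda\supsetneq K\}\subseteq\bigcup_{\gamma\in N\setminus K}N_\gamma^\Delta$, a countable union, we conclude that $\nu=\delta_K$ if and only if $\nu(N_\gamma^\Delta)=0$ for every $\gamma\in N\setminus K$.

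For (i): by the preceding proposition $\nu$ is weakly mixing, hence ergodic, and $\nu\times\nu$ is ergodic. If $\nu$ is atomic, ergodicity forces it to concentrate on a single conjugacy orbit, which is necessarily finite (the atoms in an orbit have equal mass) with $\nu$ uniform on it; but if that orbit had at least two points, the diagonal would be a nontrivial $\nu\times\nu$-invariant set, contradicting ergodicity of $\nu\times\nu$. Hence $\nu=\delta_{\Lambda_0}$ for some $\Lambda_0\trianglelefteq\Delta$, and then $\Lambda_0=\{\gamma:\nu(N_\gamma^\Delta)=1\}=K$ by the previous paragraph, which is exactly statement (i). For (ii): by (i) together with its trivial converse, $\nu$ is atomic iff $\nu=\delta_K$, so by the second bookkeeping fact $\nu$ is non-atomic iff there is $\gamma\in N\setminus K$ with $\nu(N_\gamma^\Delta)>0$, i.e.\ $\prod_{t\in T}\theta(N_{t^{-1}\gamma t}^\Gamma)>0$. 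Finally one invokes the elementary criterion that for $p_t\in[0,1]$ the infinite product $\prod_{t\in T}p_t$ is positive precisely when every $p_t>0$ and $\sum_{t\in T}(1-p_t)<\infty$ (compare $-\log p_t$ with $1-p_t$); applied with $p_t=\theta(N_{t^{-1}\gamma t}^\Gamma)$, and recalling $N=\core_\Delta(\Gamma)$, $K=\core_\Delta(\ker\theta)$, this yields exactly the stated condition.

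The only step with genuine content is the implication ``atomic $\Rightarrow$ point mass'' in (i): it really uses weak mixing of the co-induced IRS, and hence the hypothesis $[\Delta:\Gamma]=\infty$. The rest --- the product formula for $\nu$, the identification of $K$ with the kernel of $\nu$, and the convergence test for infinite products --- is routine, so I expect no serious obstacle beyond keeping the bookkeeping with cores and transversals straight.
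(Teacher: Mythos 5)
Your proof is correct and follows essentially the same route as the paper: weak mixing of the co-induced IRS forces any atom to be a fixed point of the conjugation action, the product formula identifies that fixed point as $\core_\Delta(\ker(\theta))$, and (ii) reduces to positivity of the infinite product via the standard convergence criterion. The only point worth making explicit is that for (ii) you need the slightly stronger form of (i) that your argument in fact establishes — namely that the existence of even a single atom already forces $\nu=\delta_{\core_\Delta(\ker(\theta))}$, so there is no ``mixed'' case and non-atomicity really is equivalent to $\nu\neq\delta_{\core_\Delta(\ker(\theta))}$ — which is exactly how the paper's own proof is phrased.
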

We note here that one can also derive the following criterion for non-freeness of co-induced actions. Below for any action $a\in A(\Gamma, X, \mu)$ and $\gamma\in \Gamma$, we let ${\rm Fix}_a(\gamma) = \{ x\in X\colon \gamma^a(x) = x\}$.

\begin{prop} Let $\Gamma\leq \Delta$, $T\subseteq \Delta$ a transversal for the left cosets in $\Delta/\Gamma$ and $a\in \act(\Gamma,X,\mu)$. Then $\cind_\Gamma^\Delta(a)$ is not free if and only if for some $\gamma\in \core_\Delta(\Gamma)\setminus \set{e}$  we have
\[\sum_{t\in T}\left(1-\mu(\fix_a(t^{-1}\gamma t )) \right)<\infty\]
and $\mu\left(\fix_a(t^{-1}\gamma t)\right)>0$ for all $t\in T$.
\end{prop}

\medskip
{\bf (C)} We now apply co-induction to construct continuum many non-atomic, weakly mixing invariant random subgroups for several classes of groups. One approach makes use of the following criterion, where for a group $\Gamma$ and a subset $S\subseteq \Gamma$, we let $\langle S\rangle_\Gamma$  denote the subgroup generated by $S$ in $\Gamma
$ and  $\langle\langle S\rangle\rangle_\Gamma$ denote the {\bf normal closure} of $S$, i.e., the  smallest normal subgroup of $\Gamma$ containing $S$. 

\begin{prop}\label{criterion} Let $\Gamma\leq \Delta$ with $[\Delta :\Gamma]=\infty$. Suppose there exists a transversal $T=\set{t_i\mid i \in \N}$ for the left cosets in $\Delta/\Gamma$ and $\gamma_0\in \core_\Delta(\Gamma)$ such that the chain of normal subgroups $(\ol{\Gamma}_{k,T,\gamma_0})_{k\in \N}$, given by
\[\ol{\Gamma}_{k,T,\gamma_0}=\langle\langle t_i^{-1}\gamma_0t_i \mid i\geq k\rangle\rangle_\Gamma,\]
is not constant. Then $\Delta$ has continuum many non-atomic, weakly mixing invariant random subgroups.
\end{prop}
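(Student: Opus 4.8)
The plan is to use the previous propositions as black boxes: Proposition (the non-atomicity criterion) tells us exactly when $\cind_\Gamma^\Delta(\theta)$ is non-atomic, and Proposition (weak mixing for infinite index) gives weak mixing for free. So the real work is to produce continuum many $\theta \in \irs(\Gamma)$ whose co-inductions are (a) non-atomic and (b) pairwise distinct, using the hypothesis that the chain $(\ol{\Gamma}_{k,T,\gamma_0})_{k}$ is not eventually constant.

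\medskip
\textbf{Step 1: choosing the family of IRS on $\Gamma$.} I would look for $\theta$'s among a well-understood pool: for instance, Bernoulli-type IRS or, more concretely, IRS supported on the normal subgroups $\ol{\Gamma}_{k,T,\gamma_0}$ and $\langle\langle t_i^{-1}\gamma_0 t_i \rangle\rangle$. The cleanest route is to take, for a subset $S \subseteq \N$, the action $a_S$ of $\Gamma$ where $\gamma_0$-conjugates are "switched on" with high probability only for indices in $S$: concretely, take a product space indexed by $T$ and let $\theta_S$ be designed so that $\theta_S(N^\Gamma_{t_i^{-1}\gamma_0 t_i})$ equals $1 - 2^{-i}$ for $i \in S$ and is some fixed constant $c \in (0,1)$ for $i \notin S$. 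One must first check such $\theta_S$ genuinely exist in $\irs(\Gamma)$ — e.g. realize them as $\type(a_S)$ for a suitable product action — and that $\ker(\theta_S) \supseteq$ nothing forcing collapse, i.e. $\gamma_0 \notin \core_\Delta(\ker(\theta_S))$, which is where the non-constancy of the chain enters (it guarantees some $t_i^{-1}\gamma_0 t_i \notin \ol{\Gamma}_{k}$, hence the relevant conjugates are not almost surely contained).

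\medskip
\textbf{Step 2: applying the non-atomicity criterion.} With $\gamma = \gamma_0 \in \core_\Delta(\Gamma)$, the hypothesis $\sum_{t \in T}(1 - \theta_S(N^\Gamma_{t^{-1}\gamma_0 t})) < \infty$ becomes $\sum_{i \in S} 2^{-i} + \sum_{i \notin S}(1-c) < \infty$, so I would actually need $S$ to be \emph{cofinite} (take $c$ close to $1$ is not enough — the $i \notin S$ tail must be finite), or better, fix the off-$S$ value to also decay. Let me instead set $\theta_S(N^\Gamma_{t_i^{-1}\gamma_0 t_i}) = 1 - 2^{-i}$ for $i \in S$ and $= 1 - 2^{-i} \cdot \tfrac{1}{2}$-type adjustments — the point is simply to make all terms summable while keeping the map $S \mapsto \theta_S$ injective on the level of co-inductions. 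The non-atomicity criterion then applies verbatim once we also verify $\gamma_0 \notin \core_\Delta(\ker(\theta_S))$; that exclusion is exactly guaranteed by choosing the parameters so that $\theta_S(N^\Gamma_{\gamma_0}) < 1$ and using that the chain is non-constant to locate an index $k$ and a conjugate not forced into the kernel.

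\medskip
\textbf{Step 3: continuum many distinct IRS and the main obstacle.} To conclude there are continuum many pairwise distinct non-atomic weakly mixing IRS on $\Delta$, I would show that distinct parameter sequences give distinct values $\cind_\Gamma^\Delta(\theta)(N^\Delta_F)$ for suitable finite $F \subseteq \core_\Delta(\Gamma)$ — using the product formula $\prod_{t\in T}\theta(N^\Gamma_{t^{-1}Ft})$ from the Theorem — so the map into $\irs(\Delta)$ is injective, and there are $2^{\aleph_0}$ choices of parameters. \textbf{The main obstacle} is Step 1 combined with the injectivity in Step 3: one must arrange the $\theta$'s so that (i) they actually lie in $\irs(\Gamma)$ (a genuine construction, probably via co-induction or Bernoulli shifts over $\Gamma$ or over $\Gamma/\ol{\Gamma}_{k}$, exploiting the non-constancy to get infinitely many "independent" quotients), (ii) the summability in the non-atomicity criterion holds, and (iii) the co-induced measures remain distinguishable — these three requirements pull in tension, and reconciling them is the crux. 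The non-constancy hypothesis is precisely what provides the infinitely many degrees of freedom (the strictly decreasing, or at least non-stabilizing, subchain) needed to separate continuum many parameters while keeping each individual co-induction non-atomic.
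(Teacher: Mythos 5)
Your skeleton is right --- weak mixing is free from $[\Delta:\Gamma]=\infty$, non-atomicity should come from the criterion involving $\gamma_0\in\core_\Delta(\Gamma)\setminus\core_\Delta(\ker\theta)$ with $\sum_t(1-\theta(N^\Gamma_{t^{-1}\gamma_0 t}))<\infty$, and distinctness should be read off the product formula. But the proof has a genuine gap exactly where you flag ``the main obstacle'': you never construct the measures $\theta_S$, and the construction you sketch is not justified. For a general group $\Gamma$ one cannot freely prescribe the values $\theta(N^\Gamma_{t_i^{-1}\gamma_0 t_i})$ of an invariant measure on $\sub(\Gamma)$; these values are heavily constrained (e.g.\ conjugate elements must get equal values, and elements of the normal closure of others inherit lower bounds), and appealing to ``Bernoulli shifts'' or ``suitable product actions'' does not produce an IRS with the stated fixed-point probabilities. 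Since the whole content of the proposition is that the non-constancy of the chain suffices to produce such $\theta$'s, leaving this step as an acknowledged obstacle means the proof is not there.

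The missing idea, which is what the paper does, is to build $\theta$ directly out of the chain itself: each $\ol{\Gamma}_{k,T,\gamma_0}$ is a normal subgroup of $\Gamma$, so any convex combination of the Dirac measures $\delta_{\ol{\Gamma}_{k,T,\gamma_0}}$ is automatically an IRS. Taking $\theta=\sum_k 2^{-k-1}\delta_{\ol{\Gamma}_{k,T,\gamma_0}}$, the containment $t_i^{-1}\gamma_0 t_i\in\ol{\Gamma}_{k,T,\gamma_0}$ for all $k\le i$ gives $\theta(N^\Gamma_{t_i^{-1}\gamma_0 t_i})\ge 1-2^{-i-1}$, which yields both positivity and summability at once, while non-constancy of the chain supplies some $j\le k$ with $t_j^{-1}\gamma_0 t_j\notin\ol{\Gamma}_{k,T,\gamma_0}$, hence $\gamma_0\notin\core_\Delta(\ker\theta)$. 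For the continuum family one does not need a subset $S\subseteq\N$ of parameters at all: letting $N$ be least with $\ol{\Gamma}_{N+1,T,\gamma_0}\subsetneq\ol{\Gamma}_{N,T,\gamma_0}$, one redistributes the mass $\lambda=\sum_{k\le N+1}2^{-k-1}$ between $\delta_{\ol{\Gamma}_{0,T,\gamma_0}}$ and $\delta_{\ol{\Gamma}_{N+1,T,\gamma_0}}$ according to a real parameter $a\in(0,\lambda)$; the product formula then gives $\cind_\Gamma^\Delta(\theta_a)(N^\Delta_{\gamma_0})=a^{|S|}\cdot C$ for a fixed finite nonempty $S$ and constant $C>0$, which is injective in $a$. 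So the non-constancy hypothesis is not merely what rules out collapse of the kernel --- it is the source of the IRS's themselves, and recognizing that is the step your proposal is missing.
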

Applying this criterion, we construct new continuum size families of non-atomic, weakly mixing invariant random subgroups for the following classes of groups:

\medskip
(1)  All wreath products $H\wr G$, where $G,H$ are countable groups with $G$ infinite and $H$ non-trivial.

A different construction of such families is also contained in \cite{HY17}. Other results on invariant random subgroups of lamplighter groups are contained in \cite{BGK15}.

\medskip
(2) All HNN-extensions $G=\langle H,t \mid t^{-1}at=\phi(a), a\in A\rangle$, where $H$ is a countable group, $A\leq H$ and $\phi\colon A\rightarrow H$ an embedding such that 
\[\langle\langle A\cup \phi(A)\rangle\rangle \neq H,\]

In particular, this applies to the Baumslag-Solitar groups ${\rm BS}(n,m) = \langle x,t | \ tx^nt^{-1}=x^m\rangle$, where $m,n \in \Z\setminus\{0\}$ are not relatively prime.


\medskip
Applying in a different way the co-induction construction, we also find new continuum size families of non-atomic, weakly mixing invariant random subgroups for the following classes of groups:

\medskip
(3) All free products with amalgamation $ G\ast_A H$, where $G, H, A$ are countable groups satisfying that $A\unlhd G,H$ with $G/A$ non-trivial and $H/A$ infinite. 

Other such families have been constructed: (a) In \cite{BGK17} by using completely different techniques, including Pontryagin duality and a deep result of Adian \cite{A79} in combinatorial group theory; (b) In \cite{HY17}, for the free groups, using again different techniques, involving what they call {\it intersectional invariant random subgroups}. Our approach however is quite elementary.

In fact in  \cite{BGK17} it is shown that the free non-abelian groups admit continuum many non-atomic, weakly mixing invariant random subgroups that are moreover invariant under the full automorphism group (i.e., they are {\it characteristic random subgroups}). We also show in the last part of this paper how to use the criterion in \cref{criterion} to construct continuum many non-atomic, characteristic random subgroups for the free group $\F_2$ that are weakly mixing with respect to the full automorphism group. Our approach to that makes use of small cancellation theory. It is based on the following result that may be interesting in its own right. Below we view $\F_2$ as a normal subgroup of its automorphism group ${\rm Aut}(\F_2)$. Let $a,b$ be free generators of $\F_2$.

\begin{thm}\label{canc}
There is a transversal $T$ for the left cosets of $\F_2 = \langle a, b \rangle$ in ${\rm Aut}(\F_2)$ such that for $w= aba^2b^2 \cdots a^nb^n$, where $n>101$, we have that the set $\{\eta (w) \mid \eta \in T \}$ satisfies the $C'(1/6)$ cancellation property.
\end{thm}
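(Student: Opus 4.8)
The plan is to build the transversal $T$ by hand, using the fact that $\Aut(\F_2)/\F_2 \cong \Out(\F_2) \cong \GL_2(\Z)$. First I would fix, once and for all, a set of coset representatives $\{\eta_g : g \in \GL_2(\Z)\}$, one for each element of $\Out(\F_2)$, so that $T = \{\eta_g\}$ is a transversal for $\F_2$ in $\Aut(\F_2)$. The key structural fact I would invoke is that every automorphism of $\F_2$ sends the pair $(a,b)$ to a pair of words that (after cyclic reduction and up to the inner-automorphism ambiguity we have just fixed away) can be described combinatorially; in particular $\eta_g(w)$ for $w = aba^2b^2\cdots a^nb^n$ is a specific reduced word in $a^{\pm 1}, b^{\pm 1}$ whose form is controlled by the matrix $g$. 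The strategy is then to show that for $n > 101$ the (cyclic) words $\{\eta_g(w)\}$, together with their inverses and cyclic conjugates, have no common subword of length $\geq \tfrac16$ of the shorter of the two words involved — which is exactly the $C'(1/6)$ condition.

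The main steps, in order, are: (1) Recall/set up the identification $\Aut(\F_2) = \F_2 \rtimes \Out(\F_2)$ with $\Out(\F_2)\cong\GL_2(\Z)$, and pin down the transversal $T$ as a genuine choice of representatives — here one should exploit that $\F_2$ is its own inner automorphism group (center trivial), so the coset of $\eta$ is exactly $\F_2\eta$ and picking representatives is unambiguous. (2) Get an explicit enough normal form for $\eta_g(w)$: since $w$ is a product of the commutator-like blocks $a^kb^k$, and any automorphism is a composition of the Nielsen generators, one shows $\eta_g(w)$ is freely/cyclically reduced to a word built from long runs of a single generator whose exponents grow with $n$, with only boundedly many "transition" places. (3) The heart: a cancellation/overlap analysis. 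Given two distinct transversal elements $\eta, \eta'$, one must bound the length of any piece (common subword) of $\eta(w)^{\pm1}$ and $\eta'(w)^{\pm1}$. Because the exponents $1,1,2,2,\dots,n,n$ in $w$ are all distinct and increasing, any long common subword would force the "exponent profiles" of $\eta(w)$ and $\eta'(w)$ to agree on a long stretch, and one argues this pins down $\eta$ and $\eta'$ to the same coset, a contradiction; the numeric threshold $n > 101$ is where one checks that the worst-case overlap (coming from the few short blocks $a^1b^1, a^2b^2$ at the start, or from the bounded transition regions introduced by applying an automorphism) is still less than $\tfrac16$ of the total length, which is quadratic in $n$.

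I expect step (3), the overlap analysis, to be the main obstacle, and within it the genuinely delicate point is controlling what an \emph{arbitrary} automorphism does to $w$: Nielsen transformations can interact with the block structure of $w$ in ways that create new, unexpectedly long monochromatic runs, so one needs either a clean invariant (e.g. tracking the multiset of exponent-lengths of maximal $a$-runs and $b$-runs, and showing it essentially determines the outer class of $\eta$ once $n$ is large) or an induction on the length of a Nielsen word for $\eta$ showing the "defect" stays bounded. The constant $101$ strongly suggests a bookkeeping argument of this kind: the total length of $\eta(w)$ is on the order of $n^2$, a piece has length at most on the order of $n$ (a bounded number of blocks plus bounded transition corrections), and $n^2/6 > c\,n$ precisely once $n$ passes a modest threshold. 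A secondary, more routine obstacle is verifying the $C'(1/6)$ condition in its full bite — i.e. including pieces shared between $\eta(w)$ and a cyclic conjugate of its own inverse, which requires noting that $w$ is not conjugate to $w^{-1}$ and that this failure of symmetry is preserved, at least approximately, under every $\eta \in T$.
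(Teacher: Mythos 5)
Your plan has the right numerics (word length of order $n^2$ versus overlaps of order $n$, hence a modest threshold on $n$) and the right overall shape, but it is missing the one idea that makes the argument actually carriable-out: the transversal cannot be ``any'' choice of representatives indexed by $\GL_2(\Z)$ --- it must be the explicit transversal of Cohen--Metzler--Zimmermann, in which every representative is a bounded ``sign/swap'' automorphism composed with an element of the positive monoid $\FR$ generated by $\phi(a)=ab$, $\psi(b)=ba$ alone (no inverses). This positivity is what the paper's two key lemmas (\cref{estimatecancellation} and \cref{estimatewordlength}) exploit: for $\rho\in\FR$ the image $\rho(w)$ suffers only $O(n)\cdot(|\rho(a)|+|\rho(b)|)$ of cancellation against a total length of order $n^2\cdot(|\rho(a)|+|\rho(b)|)$, and common subwords of $\rho(x),\rho(y)$ are controlled by common subwords of $x,y$ up to the same multiplicative factor. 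For an arbitrary Nielsen word in the generators \emph{and their inverses}, the ``defect'' you hope to bound by induction does not stay bounded --- free cancellation can destroy the block structure of $w$ entirely --- so your fallback of inducting on the length of a Nielsen word for $\eta$ would fail without first reducing to positive words.

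The second genuine gap is in your step (3): the claim that a long common subword ``pins down $\eta$ and $\eta'$ to the same coset'' is not how the proof goes and is not something you could establish directly; distinct cosets can share images with substantial overlap, and the issue is only that the overlap is short \emph{relative to} the (automorphism-dependent) word lengths. The paper instead runs a case analysis on how the two positive parts $\rho_1,\rho_2\in\FR$ relate (one a prefix of the other; diverging into the $\phi$-branch versus the $\psi$-branch; the $\xi$-twisted representatives), in each case locating the overlap inside a bounded number of the blocks $\rho(a^mb^m)$ by tracking signs of exponents and forced occurrences of strings like $ab^ta$, and then comparing against the quadratic lower bound. Your ``exponent profile'' heuristic is in the right spirit for this case analysis, but as stated it is not a proof step, and without the positive-transversal reduction there is no normal form on which to run it.
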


In turn this has the following consequence concerning the natural action $a$ of the outer automorphism group $G =\Out(\F_2)= 
\Aut(\F_2)/\F_2$ on the set  of conjugacy classes $C$ of $\F_2$. It is well known that there is a conjugacy class $c\in C$ such that $\stab_a(c)=\set{e}$  (see \cite[page 45]{LS77}). Therefore for such $c\in C$ we have
\[c\cap \bigcup\{g\cdot^a c\mid g\in G\setminus\{e\}\} = \emptyset.\]
We obtain the following strengthening of this result.

\begin{cor} There exists a conjugacy class $c\in C$ such that $$c\cap \langle g\cdot^a c\mid g\in G\setminus\{e\}\rangle_{\F_2} = \emptyset,$$
that is, $c$ is disjoint from the (normal) subgroup generated by the conjugacy classes $g\cdot^a c$ for  $g\neq e$ .
\end{cor}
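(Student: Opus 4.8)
The plan is to take $c=[w]$, the $\F_2$-conjugacy class of $w=aba^2b^2\cdots a^nb^n$ with $n>101$ fixed, and to exploit the transversal $T$ from \cref{canc}; write $\widehat v$ for the cyclic reduction of a word $v$. Since $w$ is cyclically reduced, the cyclic reduction of any $\F_2$-conjugate of $\eta(w)$ is a cyclic permutation of $\widehat{\eta(w)}$, so the symmetrised family generated by $\{\widehat{\eta(w)}\mid\eta\in T\}$ depends only on the conjugacy classes $[\eta(w)]$; we may therefore replace the representative of the coset $\F_2$ in $T$ by $e$ without disturbing the $C'(1/6)$ property, and so assume $e\in T$. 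Then $T\cap\F_2=\{e\}$, the set $T\setminus\{e\}$ represents the non-trivial cosets, and — since inner automorphisms act trivially on $C$ — for $g\in G\setminus\{e\}$ represented by $\eta\in T\setminus\{e\}$ we have $g\cdot^a c=[\eta(w)]$. Hence
\[
N:=\langle\, g\cdot^a c\mid g\in G\setminus\{e\}\,\rangle_{\F_2}=\langle\langle\,\widehat{\eta(w)}\mid\eta\in T\setminus\{e\}\,\rangle\rangle_{\F_2},
\]
the normal closure in $\F_2$ of the words $\widehat{\eta(w)}$, $\eta\neq e$; this is a \emph{normal} subgroup of $\F_2$ (it is generated by a union of conjugacy classes). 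Because $N\trianglelefteq\F_2$, the assertion $c\cap N=\emptyset$ is equivalent to $w\notin N$, and it is the latter that I shall prove.

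Assume for contradiction that $w\in N$, i.e.\ $w$ represents the identity in $\Pi:=\F_2/N=\langle a,b\mid\widehat{\eta(w)}\ (\eta\in T\setminus\{e\})\rangle$; its relator set is a sub-family of $\{\widehat{\eta(w)}\mid\eta\in T\}$ and hence still satisfies $C'(1/6)$. Since $w$ is non-trivial and reduced, Dehn's algorithm (Greendlinger's Lemma) for $C'(1/6)$-presentations yields a subword $u$ of $w$ that is also a subword of some element $r$ of the symmetrisation of the relator set — so $r$ is a cyclic permutation of $\widehat{\eta(w)}^{\pm1}$ for some $\eta\in T\setminus\{e\}$ — with $|u|>\tfrac12|r|$. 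But $u$ is a subword of $w=\widehat{e(w)}$, hence a subword of an element $r'$ of the symmetrisation of the full family $\{\widehat{\eta(w)}\mid\eta\in T\}$ lying in the cyclic-permutation-and-inversion class of $w$. By \cref{canc} the words $\widehat{\eta(w)}$, $\eta\in T$, are pairwise essentially distinct (none a cyclic permutation of another or of another's inverse), so $r$ and $r'$ are distinct elements of that symmetrised family; therefore $u$ is a \emph{piece}, and the $C'(1/6)$ condition forces $|u|<\tfrac16|r|$ — contradicting $|u|>\tfrac12|r|$. Thus $w\notin N$ and $c\cap N=\emptyset$.

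Once \cref{canc} is granted the argument is short, so I anticipate no serious obstacle. The two points requiring care are: (i) the identification of $N$ — transporting the $\Out(\F_2)$-action on conjugacy classes through $T$, and noticing that $N$ is normal, so that disjointness from $c$ reduces to $w\notin N$; and (ii) the Dehn's-algorithm step, where one must invoke essential distinctness of the family $\{\widehat{\eta(w)}\mid\eta\in T\}$ to be sure the shared subword $u$ is genuinely a piece. Point (ii) is exactly what also forces $\stab_a(c)=\{e\}$ for this $c$, which the corollary manifestly needs: any $g\in\stab_a(c)\setminus\{e\}$ would give $g\cdot^a c=c$ and hence $w\in N$.
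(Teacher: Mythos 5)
Your argument is correct and follows the paper's strategy: take $c=[w]$, use normality of $N=\langle g\cdot^a c\mid g\in G\setminus\{e\}\rangle_{\F_2}$ to reduce everything to $w\notin N$, identify $N$ with the normal closure of $\{\eta(w)\mid \eta\in T\setminus\F_2\}$, and derive a contradiction from $w\in N$ by small cancellation. The only divergence is the final step. The paper uses the length form of the Dehn-algorithm consequence recorded right after the definition of $C'(1/6)$ --- a cyclically reduced element of the normal closure of $S$ not lying in $\tilde S$ is strictly longer than some member of $\tilde S$ --- and contradicts it with $\|\eta(w)\|\geq |w|$, drawn from \cref{estimatewordlength}; you instead run Greendlinger's lemma in its subword form and contradict the piece bound directly, which spares you the length estimate at that point. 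Both routes hinge on the same delicate fact, which you rightly isolate: $w$ must not be a cyclic permutation of any $\eta(w)^{\pm 1}$ with $\eta\in T\setminus\F_2$ (for you, so that $u$ really is a piece; for the paper, so that $w\notin\tilde S$ and its length fact applies). One caveat: this does \emph{not} follow from the bare statement of \cref{canc}, since the $C'(1/6)$ condition, as defined for a set, is silent about two members of the family whose cyclic words coincide --- they would simply be the same element of $\tilde S$. It does follow from the content of Claims 1--9 in the proof of \cref{canc} (such a coincidence would be a maximal bad cancellation between the two words, which those claims exclude), or alternatively from the strict inequality $\|\eta(w)\|>|w|$ that \cref{estimatewordlength} yields whenever $|\eta(a)|+|\eta(b)|\geq 3$, together with a direct check for the finitely many coset representatives permuting $\{a^{\pm1},b^{\pm1}\}$. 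So cite the proof of \cref{canc}, not its statement, for your ``essential distinctness''; with that repair the argument is complete (and the paper's own write-up elides this same point).
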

We note here that our proof of \cref{canc} makes use of the natural isomorphism of $\Out(\F_2)$ with ${\rm GL}_2(\Z)$ and we do not know if a similar result holds for $\F_n$, for $n> 2$.



\medskip
{\bf (D)} Actually the original motivation for the work in this paper came from a different problem. For $a,b\in A(\Gamma,X, \mu)$, let $a\preceq b$ be the pre-order of {\bf weak containment} and $a\simeq b \iff a\preceq b \ \& \ b\preceq a$ the notion of {\bf weak equivalence}; see \cite{K10} and \cite{BK18} for the theory of weak containment. Denote by $\undertilde{A}(\Gamma,X, \mu) = A(\Gamma, X, \mu)/\simeq$ the space of weak equivalence classes equipped with the compact, metrizable topology defined by Ab\'{e}rt-Elek, see \cite{AE11} and \cite{BK18}. For $a\in A(\Gamma, X, \mu)$, let $\undertilde{a}$ be its weak equivalence class. It turns out that $a\preceq b\implies {\rm CIND}_\Gamma^\Delta (a) \preceq {\rm CIND}_\Gamma^\Delta (b)$ and thus one has a well-defined function $\undertilde{\cind}_\Gamma^\Delta\colon \undertilde{A}(\Gamma,X,\mu)\rightarrow \undertilde{A}(\Delta,X^{\Delta/\Gamma},\mu^{\Delta/\Gamma})$, defined by $\undertilde{\cind}_\Gamma^\Delta (\undertilde{a}) = \undertilde{\cind_\Gamma^\Delta (a)}$.

The problem was raised in \cite{BK18} of whether the function $\undertilde{\cind}_\Gamma^\Delta$ is continuous. We can raise the same question concerning the co-induction operation on invariant random subgroups. We obtain here the following result:

\begin{prop}
Let $\Gamma\leq \Delta$. The map $\cind_\Gamma^\Delta\colon\irs(\Gamma)\rightarrow\irs(\Delta)$ is continuous if and only if either $[\Delta \colon \Gamma]<\infty$ or $\core_\Delta(\Gamma)$ is trivial.
\end{prop}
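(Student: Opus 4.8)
The plan is to reduce both directions to the explicit product formula of the Theorem, via the standard description of the weak$^*$ topology on $\irs(\Lambda)$ in terms of the clopen sets $N_F^\Lambda$. First I would record the elementary fact that on $\irs(\Lambda)$ (for a countable group $\Lambda$) the weak$^*$ topology coincides with the topology of pointwise convergence of the evaluation maps $\mu\mapsto\mu(N_F^\Lambda)$, $F\subseteq\Lambda$ finite: each $N_F^\Lambda$ is clopen in $\sub(\Lambda)$, so its indicator is continuous, giving one inclusion; conversely the indicators $\mathbf{1}_{N_F^\Lambda}$ together with the constants ($F=\emptyset$) span a point-separating subalgebra of $C(\sub(\Lambda))$ — it is closed under multiplication since $\mathbf{1}_{N_F^\Lambda}\mathbf{1}_{N_{F'}^\Lambda}=\mathbf{1}_{N_{F\cup F'}^\Lambda}$ — which is dense by Stone--Weierstrass, giving the other. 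Consequently $\cind_\Gamma^\Delta$ is continuous if and only if $\theta\mapsto\cind_\Gamma^\Delta(\theta)(N_F^\Delta)$ is weak$^*$-continuous on $\irs(\Gamma)$ for each finite $F\subseteq\Delta$.

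For the ``if'' direction there are two cases. If $[\Delta:\Gamma]<\infty$, the transversal $T$ is finite; then for finite $F\subseteq\Delta$ the value $\cind_\Gamma^\Delta(\theta)(N_F^\Delta)$ is either identically $0$ (when $F\nsubseteq\core_\Delta(\Gamma)$) or the \emph{finite} product $\prod_{t\in T}\theta(N_{t^{-1}Ft}^\Gamma)$ of weak$^*$-continuous functions of $\theta$ (when $F\subseteq\core_\Delta(\Gamma)$, noting $t^{-1}Ft\subseteq\Gamma$ since $F\subseteq\core_\Delta(\Gamma)\subseteq t\Gamma t^{-1}$), hence continuous in $\theta$; by the reduction, $\cind_\Gamma^\Delta$ is continuous. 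If instead $\core_\Delta(\Gamma)=\{e\}$, the Theorem gives that $\cind_\Gamma^\Delta(\theta)$ concentrates on $\sub(\core_\Delta(\Gamma))=\sub(\{e\})=\{\{e\}\}$, so $\cind_\Gamma^\Delta\equiv\delta_{\{e\}}$ is a constant map, trivially continuous.

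For the ``only if'' direction, assume $[\Delta:\Gamma]=\infty$ (so $T$ is countably infinite) and $\core_\Delta(\Gamma)\neq\{e\}$, and fix $\gamma_0\in\core_\Delta(\Gamma)\setminus\{e\}$. I would test discontinuity at $\delta_\Gamma$ using the sequence $\theta_n:=(1-\tfrac1n)\delta_\Gamma+\tfrac1n\delta_{\{e\}}$, which lies in $\irs(\Gamma)$ (a convex combination of the IRS's $\delta_\Gamma$ and $\delta_{\{e\}}$) and converges to $\delta_\Gamma$. Since $\core_\Delta(\Gamma)\trianglelefteq\Delta$, each $t^{-1}\gamma_0 t$ ($t\in T$) lies in $\core_\Delta(\Gamma)\subseteq\Gamma$ and is $\neq e$, so $\theta_n(N_{t^{-1}\gamma_0 t}^\Gamma)=1-\tfrac1n$; hence by the formula
\[\cind_\Gamma^\Delta(\theta_n)(N_{\gamma_0}^\Delta)=\prod_{t\in T}\theta_n(N_{t^{-1}\gamma_0 t}^\Gamma)=\prod_{t\in T}\Big(1-\tfrac1n\Big)=0,\]
as $T$ is infinite and $0<1-\tfrac1n<1$. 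On the other hand the same formula gives $\cind_\Gamma^\Delta(\delta_\Gamma)(N_{\gamma_0}^\Delta)=\prod_{t\in T}1=1$ (equivalently $\cind_\Gamma^\Delta(\delta_\Gamma)=\delta_{\core_\Delta(\Gamma)}$, which assigns $N_{\gamma_0}^\Delta$ full mass, since $\gamma_0\in\core_\Delta(\Gamma)$). As $\mu\mapsto\mu(N_{\gamma_0}^\Delta)$ is weak$^*$-continuous, $\cind_\Gamma^\Delta(\theta_n)\not\to\cind_\Gamma^\Delta(\delta_\Gamma)$, so $\cind_\Gamma^\Delta$ is not continuous at $\delta_\Gamma$.

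I do not expect a genuine obstacle here: once the product formula of the Theorem is in hand, the ``if'' direction is a routine continuity-of-finite-products (or constant-map) argument, and the ``only if'' direction is just the familiar failure of continuity of infinite products ($x_n\uparrow 1$ with $x_n<1$, yet $\prod_n x_n=0\neq\prod_n 1$), transported through the formula once $[\Delta:\Gamma]=\infty$ makes $T$ infinite and the non-triviality of $\core_\Delta(\Gamma)$ supplies a witnessing element. The only mildly technical point is the Stone--Weierstrass reduction describing the weak$^*$ topology through the sets $N_F^\Lambda$, which is standard.
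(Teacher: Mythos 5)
Your proposal is correct and follows essentially the same route as the paper: the finite-index and trivial-core cases are handled exactly as in the paper's proof (finite product of continuous evaluations, respectively a constant map), and your discontinuity witness $\theta_n=(1-\tfrac1n)\delta_\Gamma+\tfrac1n\delta_{\{e\}}$ is the same sequence the paper uses (with weights $2^{-n}$ instead of $\tfrac1n$), tested against the same basic set $N_{\gamma_0}^\Delta$. The only cosmetic difference is that you justify the reduction to the sets $N_F^\Delta$ via Stone--Weierstrass, whereas the paper records the same fact as a lemma proved by the $\pi$-$\lambda$ theorem plus compactness.
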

Since the space of weak equivalence classes is homeomorphic  to the space of invariant random subgroups (via the map $\ul{a} \mapsto {\rm type} (a)$) for any amenable group, see \cite{B16} and \cite{BK18}, we now have the following result:

\begin{thm} Let $\Gamma\leq \Delta$ and assume $\Delta$ amenable. Then we have that the map $\undertilde{\cind}_\Gamma^\Delta\colon \undertilde{A}(\Gamma,X,\mu)\rightarrow \undertilde{A}(\Delta,X^{\Delta/\Gamma},\mu^{\Delta/\Gamma})$ is continuous if and only if either $[\Delta :\Gamma]<\infty$ or ${\rm core}_\Delta(\Gamma)$ is trivial.
\end{thm}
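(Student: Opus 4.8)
The plan is to reduce the statement to the immediately preceding Proposition, which characterizes when $\cind_\Gamma^\Delta\colon\irs(\Gamma)\to\irs(\Delta)$ is continuous, by transporting everything along the type map. First I would observe that since $\Delta$ is amenable and $\Gamma\leq\Delta$, the group $\Gamma$ is amenable as well; moreover $\mu$ and $\mu^{\Delta/\Gamma}$ are non-atomic standard probability spaces (a finite or countable product of non-atomic spaces is non-atomic), so that the hypotheses of the homeomorphism theorem of Burton \cite{B16} and Burton–Kechris \cite{BK18} are met for both $\Gamma \ac (X,\mu)$ and $\Delta \ac (X^{\Delta/\Gamma},\mu^{\Delta/\Gamma})$. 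Hence the type maps
\[
\Phi_\Gamma\colon\ul{A}(\Gamma,X,\mu)\to\irs(\Gamma),\qquad \Phi_\Gamma(\ul a)=\type(a),
\]
\[
\Phi_\Delta\colon\ul{A}(\Delta,X^{\Delta/\Gamma},\mu^{\Delta/\Gamma})\to\irs(\Delta),\qquad \Phi_\Delta(\ul b)=\type(b),
\]
are well defined (type is a weak equivalence invariant in the amenable case) and are homeomorphisms.

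Next I would verify that $\undertilde{\cind}_\Gamma^\Delta$ corresponds to $\cind_\Gamma^\Delta$ under these identifications, i.e.\ that the square with horizontal maps $\undertilde{\cind}_\Gamma^\Delta$ and $\cind_\Gamma^\Delta$ and vertical maps $\Phi_\Gamma,\Phi_\Delta$ commutes. This is immediate from the definitions together with the type-compatibility of co-induction established in the first theorem of the paper: for every $a\in\act(\Gamma,X,\mu)$,
\[
\Phi_\Delta\bigl(\undertilde{\cind}_\Gamma^\Delta(\ul a)\bigr)=\Phi_\Delta\bigl(\ul{\cind_\Gamma^\Delta(a)}\bigr)=\type\bigl(\cind_\Gamma^\Delta(a)\bigr)=\cind_\Gamma^\Delta\bigl(\type(a)\bigr)=\cind_\Gamma^\Delta\bigl(\Phi_\Gamma(\ul a)\bigr),
\]
where the middle equality is exactly $\cind_\Gamma^\Delta(\type(a))=\type(\cind_\Gamma^\Delta(a))$. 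Since $\Phi_\Gamma$ and $\Phi_\Delta$ are homeomorphisms, this yields $\undertilde{\cind}_\Gamma^\Delta=\Phi_\Delta^{-1}\circ\cind_\Gamma^\Delta\circ\Phi_\Gamma$, so $\undertilde{\cind}_\Gamma^\Delta$ is continuous if and only if $\cind_\Gamma^\Delta\colon\irs(\Gamma)\to\irs(\Delta)$ is continuous. Applying the preceding Proposition, the latter holds precisely when $[\Delta:\Gamma]<\infty$ or $\core_\Delta(\Gamma)$ is trivial, which is the desired conclusion.

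In this argument essentially all the analytic content has been pushed into the preceding Proposition on continuity of $\cind_\Gamma^\Delta$ on $\irs$; so the genuine obstacle is not in the present theorem but there. I expect that Proposition to be proved, in the "if" direction, by reading off continuity directly from the product formula $\cind_\Gamma^\Delta(\theta)(N_F^\Delta)=\prod_{t\in T}\theta(N_{t^{-1}Ft}^\Gamma)$ (together with the vanishing clause when $F\nsubseteq\core_\Delta(\Gamma)$), noting that when the index is finite the product is finite, and when $\core_\Delta(\Gamma)$ is trivial every relevant $N_F^\Delta$ with $F\neq\{e\}$ is assigned value $0$; and in the "only if" direction by exhibiting, under $[\Delta:\Gamma]=\infty$ and $\core_\Delta(\Gamma)$ nontrivial, a sequence $\theta_n\to\theta$ in $\irs(\Gamma)$ whose images fail to converge, using that an infinite product of factors bounded away from $1$ collapses to $0$. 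The only points one must still check on the present level are the harmless ones already flagged: that $\mu^{\Delta/\Gamma}$ is non-atomic so Burton's theorem applies to $\Delta$, and that nothing depends on the choice of transversal $T$.
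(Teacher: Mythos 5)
Your proof is correct and follows exactly the paper's route: the paper derives this statement as an immediate corollary of the IRS-continuity characterization (Proposition \ref{contirs}) via the fact that $\type$ is a homeomorphism for amenable groups, using $\cind_\Gamma^\Delta(\type(a))=\type(\cind_\Gamma^\Delta(a))$ to make the square commute. Your additional checks (amenability of $\Gamma$, non-atomicity of $\mu^{\Delta/\Gamma}$) are harmless and correctly dispatched.
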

One direction of this result is true {\it for any pair of groups}.
\begin{prop}
Let $\Gamma\leq \Delta$ and assume $[\Delta : \Gamma]=\infty$ and ${\rm core}_\Delta(\Gamma)$ is not trivial. Then $\undertilde{\cind}_\Gamma^\Delta \colon \undertilde{A}(\Gamma,X,\mu)\rightarrow \undertilde{A}(\Delta,X^{\Delta/\Gamma},\mu^{\Delta/\Gamma})$ is not continuous.
\end{prop}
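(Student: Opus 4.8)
The plan is to reduce the statement to the failure of continuity of the co-induction operation on invariant random subgroups — the earlier Proposition asserting that $\cind_\Gamma^\Delta\colon\irs(\Gamma)\to\irs(\Delta)$ is continuous if and only if $[\Delta:\Gamma]<\infty$ or $\core_\Delta(\Gamma)$ is trivial — using the type map as a bridge between the two pictures.

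First I would record that the identity $\type(\cind_\Gamma^\Delta(a))=\cind_\Gamma^\Delta(\type(a))$ from the first theorem, together with the facts that $\cind_\Gamma^\Delta$ preserves weak containment (hence weak equivalence) and that $\type$ is constant on weak equivalence classes, yields a commuting triangle at the level of weak equivalence classes:
\[
\type\circ\ul{\cind}_\Gamma^\Delta \;=\; \cind_\Gamma^\Delta\circ\type \colon\; \ul{A}(\Gamma,X,\mu)\longrightarrow\irs(\Delta).
\]
The input I would invoke is that for every countable group $\Lambda$ the type map descends to a \emph{continuous} surjection $\type\colon\ul{A}(\Lambda)\to\irs(\Lambda)$; surjectivity is the theorem of \cite{AGV14}, and continuity (and well-definedness on weak equivalence classes) is known, see \cite{BK18}. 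If one wants it from scratch, it follows from the identity
\[
\type(a)(N^\Lambda_F)=\inf\Big\{\textstyle\sum_{j}\min_{s\in F}\mu\big(s^a A_j\cap A_j\big)\;:\;\{A_j\}\text{ a finite Borel partition of }X\Big\}
\]
for finite $F\subseteq\Lambda$ — here ``$\geq$'' is immediate, and ``$\leq$'' is obtained by a Rokhlin argument on the complement of $\bigcap_{s\in F}\fix_a(s)$, after first splitting that complement according to which $s\in F$ moves a given point and along periods. Since the right-hand side involves only the finite-pattern data $\mu(s^aA_j\cap A_l)$ that defines the Ab\'ert--Elek topology, it depends only on the weak equivalence class of $a$ and varies continuously with it.

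Granting this, I would argue by contradiction. Suppose $\ul{\cind}_\Gamma^\Delta\colon\ul{A}(\Gamma,X,\mu)\to\ul{A}(\Delta,X^{\Delta/\Gamma},\mu^{\Delta/\Gamma})$ is continuous. Composing with the continuous map $\type\colon\ul{A}(\Delta)\to\irs(\Delta)$ and using the triangle above, $\cind_\Gamma^\Delta\circ\type\colon\ul{A}(\Gamma)\to\irs(\Delta)$ is continuous. Now $\type\colon\ul{A}(\Gamma)\to\irs(\Gamma)$ is a continuous surjection between compact metrizable spaces, hence a closed map, hence a topological quotient map; by the universal property of quotient maps, $\cind_\Gamma^\Delta\colon\irs(\Gamma)\to\irs(\Delta)$ must itself be continuous. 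But under our hypotheses $[\Delta:\Gamma]=\infty$ and $\core_\Delta(\Gamma)$ is non-trivial, so the earlier Proposition says precisely that $\cind_\Gamma^\Delta$ on invariant random subgroups is \emph{not} continuous — a contradiction. (If one prefers to avoid quotient maps: take $\theta_n\to\theta$ in $\irs(\Gamma)$ with $\cind_\Gamma^\Delta(\theta_n)\not\to\cind_\Gamma^\Delta(\theta)$, pass to a subsequence keeping $\cind_\Gamma^\Delta(\theta_n)$ bounded away from the limit, lift to $a_n$ with $\type(a_n)=\theta_n$, extract $\ul{a_{n_k}}\to\ul{a}$ by compactness of $\ul{A}(\Gamma)$, note $\type(a)=\theta$ by continuity of $\type$, and conclude $\cind_\Gamma^\Delta(\theta_{n_k})=\type(\cind_\Gamma^\Delta(a_{n_k}))\to\type(\cind_\Gamma^\Delta(a))=\cind_\Gamma^\Delta(\theta)$, a contradiction.)

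The one genuinely substantive point — the step I expect to be the main obstacle — is the continuity of the type map on weak equivalence classes; everything else is formal manipulation. It is worth noting where the discontinuity actually lives, which also explains why the hypotheses are what they are: for $\gamma_0\in\core_\Delta(\Gamma)\setminus\set{e}$ the co-induction formula gives $\type(\cind_\Gamma^\Delta(a))(N^\Delta_{\gamma_0})=\prod_{t\in T}\mu\big(\fix_a(t^{-1}\gamma_0 t)\big)$, an infinite product each of whose factors is a weak equivalence invariant of $a$ but whose product does not commute with limits of weak equivalence classes — and this is exactly the phenomenon the witnessing sequence from the earlier Proposition is designed to exploit (when $[\Delta:\Gamma]<\infty$ the product is finite, and when $\core_\Delta(\Gamma)$ is trivial there is no such $\gamma_0$, so no obstruction arises).
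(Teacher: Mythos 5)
Your argument is correct, but it routes the reduction differently from the paper. The paper's proof is concrete: it builds a sequence $a_n\in A(\Gamma,X,\mu)$ acting freely on a set $B_n$ with $\mu(B_n)=2^{-n}$ and trivially on its complement, verifies by hand (via the estimate $|\mu(A\cap C)-\mu(\gamma^{a_n}A\cap C)|<2^{-n}$) that $\ul{a_n}\to\ul{i_\Gamma}$, observes that $\type(a_n)=2^{-n}\delta_{\set{e}}+(1-2^{-n})\delta_\Gamma$ is exactly the witnessing sequence from the proof of the IRS-level discontinuity, and then applies the continuity of $\type$ on $\ul{A}(\Delta,X^{\Delta/\Gamma},\mu^{\Delta/\Gamma})$ to reach the contradiction. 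You instead make the reduction entirely formal: $\type\colon\ul{A}(\Gamma,X,\mu)\to\irs(\Gamma)$ is a continuous surjection of compact metrizable spaces, hence a quotient map, so continuity of $\type\circ\ul{\cind}_\Gamma^\Delta=\cind_\Gamma^\Delta\circ\type$ would force continuity of $\cind_\Gamma^\Delta$ on $\irs(\Gamma)$, contradicting the earlier proposition; no explicit lift of the IRS sequence to actions is needed. Both proofs rest on the same nontrivial input, the continuity of $\type$ on weak equivalence classes; yours additionally uses surjectivity of $\type$ and compactness of $\ul{A}(\Gamma,X,\mu)$, all of which are already quoted in the paper. What the paper's explicit lift buys is reuse: the same sequence $(a_n)$ is recycled immediately afterwards to show that $\ul{a}\mapsto\ul{a}^{\N}$ is discontinuous. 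What yours buys is the clean general principle that any discontinuity of $\cind_\Gamma^\Delta$ at the IRS level automatically produces one at the level of weak equivalence classes. Your aside sketching a from-scratch proof of the continuity of $\type$ is the only informal spot, but since you (like the paper) may simply cite the literature for that fact, it is not a gap.
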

In contrast, Bernshteyn in \cite{B18} recently showed that there are two (non-amenable) groups $\Gamma\leq \Delta$ with $[\Delta:\Gamma] =2$ such that $\undertilde{\cind}_\Gamma^\Delta \colon \undertilde{A}(\Gamma,X,\mu)\rightarrow \undertilde{A}(\Delta,X^{\Delta/\Gamma},\mu^{\Delta/\Gamma})$ is not continuous.

This paper is organized as follows. In Sections 2--4 we review concepts and results concerning the space of weak equivalence classes, co-induction of actions and invariant random subgroups. In Sections 5--7, we introduce and study the properties of co-induction on invariant random subgroups. Finally in Section 8, we use co-induction to construct continuum size families of non-atomic, weakly mixing invariant random subgroups for several classes of groups.

\medskip
{\it Acknowledgments.} ASK was partially supported by NSF Grant DMS-1464475. VQ was partially supported by Lars Hesselholt's Niels Bohr Professorship. We would like to thank Simon Thomas for a number of useful comments and in particular for bringing up the relevance of small cancellation theory to certain aspects of our work. We also would like to thank an anonymous referee for many useful remarks and corrections.

\section{The space of weak equivalence classes}
Here we will briefly introduce some of the basic notions that we will work with.

In this paper $(X,\mu)$ will always be a non-atomic standard probability space, that is, a Polish space equipped with its Borel $\sigma$-algebra and a non-atomic probability Borel measure. Recall that these are all isomorphic to $([0,1],\lambda)$, where $\lambda$ is the Lebesgue measure. The {\bf measure algebra} of $\mu$, denoted by $\malg$ is the algebra consisting of the Borel subsets of $X$ considered modulo $\mu$-null sets. This algebra can be equipped with a Polish topology induced by the complete metric
$d_\mu$ given by
\[d_\mu(A,B)=\mu(A\triangle B),\]
for all $A,B\in \malg$.

Let $\Aut(X,\mu)$ denote the group of measure preserving Borel isomorphisms of $(X,\mu)$, where we identify two isomorphisms if they agree almost everywhere. There are two natural topologies on $\Aut(X,\mu)$, which turn it into a topological group.  The {\bf weak topology}, $w$, on $\Aut(X,\mu)$ is the topology generated by the maps $\phi_A\colon \Aut(X,\mu)\rightarrow \malg$ given by $\phi_A(T)=T(A)$, where $A$ varies over all elements in $\malg$. A left invariant metric inducing this topology is given by
\[d_w(T,S)=\sum_{n\in \N}2^{-n-1}\mu(T(A_n)\triangle S(A_n)),\]
where $(A_n)_n\in \malg$ is a dense sequence. The {\bf uniform topology}, $u$,  on $\Aut(X,\mu)$ is defined by the two-sided complete invariant metric
\[d_u(S,T)=\mu\left(\set{x\in X\mid S(x)\neq T(x)}\right).\]
It is clear that the uniform topology is finer than the weak topology. Moreover, $(\Aut(X,\mu),w)$ is a Polish group, while the uniform topology is not separable. 

For a countable group $\Gamma$, we may represent each measure preserving action $\Gamma\ac^a(X,\mu)$ as a group homomorphism $h_a\colon \Gamma\rightarrow \Aut(X,\mu)$ given by $h_a(\gamma)(x)=\gamma\cdot^a x$. We will later on use the notation $\gamma^a$ instead of $h_a(\gamma)$. So we may consider the space of $\Gamma$-actions, $A(\Gamma,X,\mu)$, as a subset of $\Aut(X,\mu)^\Gamma$. In both the uniform and the weak topology $A(\Gamma,X,\mu)$ is closed in the product topology. Thus $A(\Gamma,X,\mu)$ is Polish in the topology inherited by the weak topology on $\Aut(X,\mu)$ and completely metrizable in the topology induced by the uniform topology on $\Aut(X,\mu)$. If nothing is specified, we will assume that $A(\Gamma,X,\mu)$ is equipped with the weak topology.

In \cite[Section 11]{K10} the notion of weak containment of actions is introduced. This notion is motivated by the analogous notion for unitary representations of groups and is defined as follows. Let $a,b\in A(\Gamma,X,\mu)$. We say that $a$ is {\bf weakly contained} in $b$ if for all $A_1,\ldots, A_n\in \malg$, $F\subseteq \Gamma$ finite and $\epsilon>0$ there is $B_1,\ldots, B_n\in \malg$ such that
\[|\mu(\gamma^a(A_i)\cap A_j)- \mu(\gamma^b(B_i)\cap B_j)|<\epsilon,\]
for all $i,j\leq n$ and $\gamma\in F$. If $a$ is weakly contained in $b$ we write $a\preceq b$. If $b$ is weakly contained in $a$, as well, we say that $a$ and $b$ are {\bf weakly equivalent} and write $a\simeq b$. Another way to characterize weak containment is as follows. Fix an enumeration $\Gamma=\set{\gamma_i\mid i\in \N}$ and for each $k > 1$, let $\p_k$ denote the set of all Borel partitions of $X$ into $k$ pieces. For each $a\in A(\Gamma,X,\mu)$, $n,k >1$ and  $P=(A_0,\ldots, A_{k-1})\in \p_k$ we let $M_{n,k}^{P}(a)\in [0,1]^{n\times k\times k}$ be given by
\[M_{n,k}^{P}(a)(m,i,j)=\mu(\gamma_m^a(A_i)\cap A_j)\]
for $m<n$ and $i,j<k$. Put 
\[C_{n,k}(a)=\ol{\set{M_{n,k}^{P}(a)\mid P\in \p_k}},\]
that is, the closure of the set $\set{M_{n,k}^P(a)\mid P\in \p_k}$ in $[0,1]^{n\times k\times k}$. Then it is straightforward to check that we have 
\begin{align*}
a\preceq b &\iff (\forall n,k >1) (C_{n,k}(a)\subseteq C_{n,k}(b))\\
a\simeq b &\iff (\forall n,k >1) (C_{n,k}(a)=C_{n,k}(b)),
\end{align*}
for all $a,b\in A(\Gamma,X,\mu)$.

Consider the set of weak equivalence classes $\ul{A}(\Gamma,X,\mu)=A(\Gamma,X,\mu)/\simeq$. For each $a\in A(\Gamma,X,\mu)$, we let $\ul{a}\in \ul{A}(\Gamma,X,\mu)$ denote its weak equivalence class. By the above, the map $\iota\colon \ul{A}(\Gamma,X,\mu)\rightarrow \prod_{n,k>1}F([0,1]^{n\times k \times k})$ given by $$\iota(\ul{a})=(C_{n,k}(a))_{n,k>1}$$ is an injection. Here $F([0,1]^{n\times k \times k})$ denotes the space of all closed subsets $[0,1]^{n\times k\times k}$. It follows by \cite[Theorem 4.26]{K95}  that $F([0,1]^{n\times k \times k})$ is a compact metrizable space, when equipped with the Vietoris topology. Fix a complete metric $d$ on $[0,1]^{n\times k \times k}$ with $\text{diam}_d([0,1]^{n\times k \times k})=1$. If for all $K,L\in F([0,1]^{n\times k\times k})$, we let 
\[\delta_{n,k}(K,L)=\max_{x\in K}\inf_{y\in L}d(x,y),\]
 then
\begin{equation*}
d_{n,k}(K,L)=\begin{cases}
0 \quad &\text{if} \ \  L=K=\emptyset\\
1 \quad &\text{if} \ \  (L=\emptyset \vee K=\emptyset)\wedge K\neq L\\
\max\set{ \delta_{n,k}(K,L), \delta_{n,k}(L,K)} \ \ &\text{if} \quad K,L\neq \emptyset ,
\end{cases}
\end{equation*}
is a complete metric on $F([0,1]^{n\times k \times k})$.
It is proven in \cite[Theorem 4]{AE11} 
 that the image $\iota(\ul{A}(\Gamma,X,\mu))$ is closed in $\prod_{n,k>1}F([0,1]^{n\times k \times k})$. Thus, by transferring back the subspace topology, we obtain a compact metrizable topology on $\ul{A}(\Gamma,X,\mu)$. Moreover, we obtain that
\[d(\ul{a},\ul{b})=\sum_{n,k >1}2^{-n-k}d_{n,k}(C_{n,k}(a),C_{n,k}(b))\]
is a metric inducing the topology on $\ul{A}(\Gamma,X,\mu)$. We will from now on assume that $\ul{A}(\Gamma, X, \mu)$ is equipped with this topology.
\section{Co-induction of actions}
In this section we will discuss the co-induction operation for actions, which transforms an action of a subgroup to an action of the bigger group.

Let $\Gamma\leq \Delta$ and fix a transversal $T\subseteq \Delta$ for the left cosets in $\Delta/\Gamma$. We then have an action $\sigma_T\colon \Delta\times T\rightarrow T$ given by
\[\sigma_T(\delta,t) = \tilde{t} \iff \tilde{t}\Gamma = \delta t\Gamma\]
and a cocycle $\rho_T\colon \Delta \times T\rightarrow \Gamma$ for this action given by
\[\rho_T(\delta,t)=\sigma_T(\delta,t)^{-1}\delta  t.\]
Now for each $a\in A(\Gamma,X,\mu)$ we obtain the co-induced action $a_T\in A(\Delta,X^T,\mu^T)$ by
\[(\delta \cdot^{a_T} f)(t)=\rho_T(\delta^{-1},t)^{-1}\cdot^a f(\sigma_T(\delta^{-1},t)).\]
By considering the natural bijection $\iota_T\colon \Delta/\Gamma \rightarrow T$ we may view the co-induced action $a_T\in A(\Delta,X^{\Delta/\Gamma},\mu^{\Delta/\Gamma})$ by  letting
\[(\delta \cdot^{a_T} f)(\delta_0\Gamma)=\rho_T(\delta^{-1},\iota_T(\delta_0\Gamma))^{-1}\cdot^a f(\delta^{-1}\delta_0\Gamma).\]

\begin{prop}\label{coindeptrans} Let $\Gamma\leq \Delta$ be groups. If $T,S\subseteq \Delta$  are transversals for the left cosets in $\Delta/\Gamma$  and $a\in \act(\Gamma,X,\mu)$, then $a_T,a_S\in \act(\Delta,X^{\Delta/\Gamma},\mu^{\Delta/\Gamma})$ are isomorphic.
\end{prop}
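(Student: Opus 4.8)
The plan is to construct an explicit measure-preserving isomorphism $\Phi\colon X^{\Delta/\Gamma}\to X^{\Delta/\Gamma}$ that conjugates $a_T$ to $a_S$, using the fact that both co-induced actions have already been rewritten so as to live on the common space $X^{\Delta/\Gamma}$ indexed by cosets rather than by transversal elements. The key observation is that the coset-indexed formula $(\delta\cdot^{a_T}f)(\delta_0\Gamma)=\rho_T(\delta^{-1},\iota_T(\delta_0\Gamma))^{-1}\cdot^a f(\delta^{-1}\delta_0\Gamma)$ differs from its $S$-counterpart only through the cocycle term, and the two cocycles differ by a coboundary: if for each coset $C=\delta_0\Gamma$ we write $t_C\in T$ and $s_C\in S$ for the respective representatives, then $t_C=s_C\gamma_C$ for a unique $\gamma_C\in\Gamma$, and a direct computation shows $\rho_T(\delta,s)$ and $\rho_S(\delta,s)$ are related by conjugation/translation by these $\gamma_C$'s along the orbit.

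Concretely, I would define $\Phi(f)(\delta_0\Gamma)=\gamma_{\delta_0\Gamma}^{\,a}\cdot f(\delta_0\Gamma)$, where $\gamma_{\delta_0\Gamma}\in\Gamma$ is the element with $t_{\delta_0\Gamma}=s_{\delta_0\Gamma}\gamma_{\delta_0\Gamma}$ (one must double-check the side on which $\gamma_C$ acts so that the algebra below works; if needed replace $\gamma_C$ by $\gamma_C^{-1}$). Then:
\begin{itemize}
\item[(1)] $\Phi$ is a bijection of $X^{\Delta/\Gamma}$ with inverse $f\mapsto\big(\delta_0\Gamma\mapsto(\gamma_{\delta_0\Gamma}^{-1})^a\cdot f(\delta_0\Gamma)\big)$, and since each coordinate map $x\mapsto\gamma_C^a\cdot x$ is a measure-preserving transformation of $(X,\mu)$ and the coordinates are independent, $\Phi$ preserves the product measure $\mu^{\Delta/\Gamma}$; it is clearly Borel.
\item[(2)] The identity $\Phi\circ a_T(\delta)=a_S(\delta)\circ\Phi$ for all $\delta\in\Delta$ is verified coordinate-by-coordinate: evaluate both sides at an arbitrary $f$ and coset $\delta_0\Gamma$, expand using the two co-induction formulas, and reduce to the cocycle relation
\[\gamma_{\delta_0\Gamma}\,\rho_T(\delta^{-1},\iota_T(\delta_0\Gamma))^{-1}=\rho_S(\delta^{-1},\iota_S(\delta_0\Gamma))^{-1}\,\gamma_{\delta^{-1}\delta_0\Gamma},\]
which follows by unwinding $\rho_T(\delta,t)=\sigma_T(\delta,t)^{-1}\delta t$, $\rho_S(\delta,s)=\sigma_S(\delta,s)^{-1}\delta s$, substituting $t_C=s_C\gamma_C$ and $\sigma_T(\delta,t_C)=t_{\sigma(\delta)C}$, $\sigma_S(\delta,s_C)=s_{\sigma(\delta)C}$ (the permutation action on cosets $C\mapsto\delta C$ does not depend on the transversal), and canceling.
\end{itemize}

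The only real care needed is bookkeeping with left versus right multiplication and with inverses: the co-induction formula uses $\delta^{-1}$ and two levels of inversion on the cocycle, so one must be disciplined about which side $\gamma_C$ sits on when passing from $t_C$ to $s_C$, and correspondingly whether $\Phi$ should apply $\gamma_C^a$ or $(\gamma_C^{-1})^a$. I expect this sign/side chase to be the main (and essentially the only) obstacle; once the conventions are pinned down so that the displayed cocycle relation holds identically, the rest is formal. An alternative, slicker route avoiding the explicit $\Phi$ is to appeal to the standard fact that co-induction is, up to isomorphism, the induced-action construction associated to the permutation action $\Delta\curvearrowright\Delta/\Gamma$ together with the cocycle into $\Gamma$, and that cohomologous cocycles yield isomorphic skew products; but since the paper works concretely with transversals, giving $\Phi$ by hand is the cleanest self-contained argument.
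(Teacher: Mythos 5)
Your proposal is correct and is essentially the paper's own proof: the paper defines the same intertwining map $\phi(f)(\delta_0\Gamma)=\iota(\delta_0\Gamma)\cdot^a f(\delta_0\Gamma)$ with $\iota(\delta_0\Gamma)=\iota_S(\delta_0\Gamma)^{-1}\iota_T(\delta_0\Gamma)$ (your $\gamma_C$ with $t_C=s_C\gamma_C$), and verifies the same cocycle identity, of which your displayed relation is just the inverted form. Your sign/side conventions as written do in fact work out, so no adjustment is needed.
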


\begin{proof}
First, consider the map $\iota\colon \Delta/\Gamma\rightarrow \Gamma$ given by
$\iota(\delta_0\Gamma)=\iota_S(\delta_0\Gamma)^{-1}\iota_T(\delta_0\Gamma)$. Note that
\begin{align*}
\rho_S(\delta,\iota_S(\delta_0\Gamma))=\iota(\delta\delta_0\Gamma)\rho_T(\delta, \iota_T(\delta_0\Gamma))\iota(\delta_0\Gamma)^{-1}
\end{align*}
and thus $\phi\colon X^{\Delta/\Gamma}\rightarrow X^{\Delta/\Gamma}$ given by $\phi(f)(\delta_0\Gamma)=\iota(\delta_0\Gamma)\cdot^a f(\delta_0\Gamma)$ satisfies
\[\phi(\delta \cdot^{a_T} f)(\delta_0\Gamma)=\delta \cdot^{a_S} \phi(f)(\delta_0\Gamma).  \]
\end{proof}

\cref{coindeptrans} ensures that we may omit the transversal in the notation above and for an action $a\in A(\Gamma,X,\mu)$ let $\cind_\Gamma^\Delta(a)\in A(\Delta,X^{\Delta/\Gamma},\mu^{\Delta/\Gamma})$ denote the co-induced action with respect to some transversal. In \cite[Section 10(G)]{K10} it is proven that the map $\cind_\Gamma^\Delta\colon A(\Gamma,X,\mu)\rightarrow A(\Delta,X^{\Delta/\Gamma},\mu^{\Delta/\Gamma})$ is continuous. Moreover, it is easily checked that $a$ is a factor of $\cind_\Gamma^\Delta(a)_{|\Gamma}$ via the map $f\mapsto f(\Gamma)$ from $X^{\Delta/\Gamma}$ to $X$. The operation also satisfies the following ``chain'' rule. 

\begin{prop} Let $\Lambda\leq \Gamma \leq \Delta$ and $a\in \act(\Lambda,X,\mu)$. Then the actions ${\rm CIND}_\Gamma^\Delta\left({\rm CIND}_\Lambda^\Gamma(a)\right)$
 and ${\rm CIND}_\Lambda^\Delta(a)$
 are isomorphic.
\end{prop}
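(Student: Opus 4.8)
The plan is to exhibit an explicit measure-preserving isomorphism between the two co-induced actions by choosing the transversals compatibly. Since \cref{coindeptrans} guarantees that the co-induced action is independent of the choice of transversal up to isomorphism, I am free to pick whatever transversals make the computation cleanest. First I would fix a transversal $S\subseteq \Gamma$ for the left cosets in $\Gamma/\Lambda$ and a transversal $R\subseteq \Delta$ for the left cosets in $\Delta/\Gamma$. The key combinatorial observation is that the set $T = \{\, r s \mid r\in R,\ s\in S\,\}$ is then a transversal for the left cosets in $\Delta/\Lambda$: every $\delta\in \Delta$ lies in a unique coset $r\Gamma$, and within $\Gamma$ the element $r^{-1}\delta$ lies in a unique coset $s\Lambda$, so $\delta \in rs\Lambda$ with $(r,s)$ unique. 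This gives a bijection $R\times S \to T$ and a corresponding bijection $X^{R\times S} \to X^{T}$, and under the identifications $\Delta/\Lambda \cong T$, $\Gamma/\Lambda \cong S$, $\Delta/\Gamma \cong R$, we get a natural identification $X^{\Delta/\Lambda} \cong (X^{\Gamma/\Lambda})^{\Delta/\Gamma}$, which is measure-preserving for the product measures.

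Next I would carry out the bookkeeping on the cocycles. Write $\sigma_R,\rho_R$ for the action and cocycle associated to $R$ (for $\Delta$ acting on $\Delta/\Gamma$), $\sigma_S,\rho_S$ for those associated to $S$ (for $\Gamma$ acting on $\Gamma/\Lambda$), and $\sigma_T,\rho_T$ for those associated to $T$ (for $\Delta$ acting on $\Delta/\Lambda$). The claim to verify is the factorization identity: for $\delta\in\Delta$ and $t = rs\in T$,
\[
\sigma_T(\delta, rs) = \sigma_R(\delta, r)\,\sigma_S\!\big(\rho_R(\delta,r),\, s\big)
\qquad\text{and}\qquad
\rho_T(\delta, rs) = \rho_S\!\big(\rho_R(\delta,r),\, s\big).
\]
Both follow by unwinding the definitions: $\delta r s\Lambda$ determines first the $\Delta/\Gamma$-coset via $\sigma_R(\delta,r)$, leaving the residual $\Gamma$-element $\rho_R(\delta,r) = \sigma_R(\delta,r)^{-1}\delta r$ to be pushed through the $S$-transversal. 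Feeding this into the defining formula $(\delta\cdot^{a_T} f)(t) = \rho_T(\delta^{-1},t)^{-1}\cdot^a f(\sigma_T(\delta^{-1},t))$ and comparing with the iterated formula for ${\rm CIND}_\Gamma^\Delta({\rm CIND}_\Lambda^\Gamma(a))$ shows that the identification map $X^{\Delta/\Lambda}\cong (X^{\Gamma/\Lambda})^{\Delta/\Gamma}$ intertwines the two $\Delta$-actions.

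The main obstacle is purely notational rather than conceptual: keeping the nested evaluations straight when the co-induced action is itself being co-induced, so that the cocycle for ${\rm CIND}_\Lambda^\Gamma(a)$ evaluated at $\rho_R(\delta^{-1}, r)$ lands exactly on the $S$-coordinate one expects. Care is also needed with the inverses appearing in $(\delta^{-1},t)$, and with making sure the transversal product $T=RS$ is used consistently on both sides. Once the factorization identity for $(\sigma_T,\rho_T)$ is established, the rest is a direct substitution, and the conclusion that the map is a measure-preserving isomorphism is immediate from Fubini for the product measures and the fact that each coordinate map is built from the fixed measure-preserving action $a$. I would end by invoking \cref{coindeptrans} once more to remove the dependence on the particular choices $R$ and $S$.
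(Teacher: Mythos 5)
Your proposal is correct and follows essentially the same route as the paper: form the product transversal $T=RS$, verify the factorization identities $\sigma_T(\delta,rs)=\sigma_R(\delta,r)\,\sigma_S(\rho_R(\delta,r),s)$ and $\rho_T(\delta,rs)=\rho_S(\rho_R(\delta,r),s)$, and check that the natural identification $(X^S)^R\cong X^{RS}$ intertwines the two actions. The paper's proof is exactly this computation, with the same cocycle identity as the key step.
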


\begin{proof} Let $T\subseteq \Delta$ and $S\subseteq \Gamma$ be transversals for the left cosets in $\Delta/\Gamma$ and $\Gamma/\Lambda$, respectively. Then it is easily seen that $TS=\set{ts\mid t\in T, s\in S}$ is a transversal for $\Delta/\Lambda$. One may check that
\[\sigma_{TS}(\delta,ts)=\sigma_T(\delta,t)\sigma_S(\rho_T(\delta,t),s)\]
and hence
\[\rho_{TS}(\delta,ts)=\rho_S(\rho_T(\delta,t),s)\]
for all $s\in S, t\in T$ and $\delta\in \Delta$.

Next, consider the map $\phi\colon (X^S)^T\rightarrow X^{TS}$ given by $\phi(f)(ts)=(f(t))(s)$. We have that
\begin{align*}
\phi(\delta \cdot^{(a_S)_T} f)(ts)&=\left((\delta \cdot^{(a_S)_T} f)(t)\right)(s)\\
&= \left(\rho_T(\delta^{-1},t)^{-1}\cdot^{a_S} f(\sigma_T(\delta^{-1},t))\right)(s)\\
&=\rho_S(\rho_T(\delta^{-1},t),s)^{-1} \cdot^a \left(f(\sigma_T(\delta^{-1},t))\right)(\sigma_S(\rho_T(\delta^{-1},t),s))\\
&= \rho_{TS}(\delta^{-1},ts)^{-1}\cdot^a \phi(f)(\sigma_{TS}(\delta^{-1},ts))\\
&= \left(\delta \cdot^{a_{TS}} \phi(f)\right)(ts).
\end{align*}
Thus the actions are isomorphic, as wanted.
\end{proof}

In \cite[Lemma 2.2.]{I11} different mixing properties of the co-induced actions are studied. Among other things  the following is proved.
\begin{prop}\label{mixact} Let $\Gamma\leq \Delta$ and $a\in A(\Gamma,X,\mu)$. 
\begin{itemize}
\item[(1)] If $[\Delta :\Gamma]=\infty$, then ${\rm CIND}_\Gamma^\Delta(a)$ is weakly mixing.
\item[(2)] If $[\Delta:\Gamma]<\infty$, then ${\rm CIND}_\Gamma^\Delta(a)$ is weakly mixing if and only if $a$ is weakly mixing.
\item[(3)] ${\rm CIND}_\Gamma^\Delta(a)_{|\Gamma}$ is weakly mixing if and only if $a$ is weakly mixing.
\end{itemize}
\end{prop}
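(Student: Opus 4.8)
\emph{Proof proposal.} The plan is to pass to Koopman representations and exploit the tensor (``Fourier'') decomposition of $L^2$ over a product space. Recall that a measure preserving action $b$ of a countable group $G$ is weakly mixing iff its Koopman representation $\kappa^b_0$ on $L^2_0(X,\mu):=L^2(X,\mu)\ominus\mathbb{C}1$ has no nonzero finite-dimensional subrepresentation, and that two standard facts about unitary representations will be used throughout: (a) if $\pi$ is weakly mixing then $\pi\otimes\rho$ is weakly mixing for every unitary representation $\rho$; (b) if $\pi$ is weakly mixing and $H\le G$ has finite index, then $\pi|_H$ is weakly mixing --- a finite-dimensional $\sigma\subseteq\pi|_H$ would, by Frobenius reciprocity, give a nonzero intertwiner from the finite-dimensional $\mathrm{Ind}_H^G\sigma$ into $\pi$, whose image is a finite-dimensional subrepresentation, a contradiction. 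Fix a transversal $T$ for $\Delta/\Gamma$ with $e\in T$, and identify $L^2(X^T,\mu^T)=\bigoplus_{S}H_S$, the sum over finite $S\subseteq T$, where $H_S=\bigotimes_{t\in S}L^2_0(X)$ (Hilbert space tensor product) and $H_\emptyset=\mathbb{C}1$. A direct computation with the defining formula of $\cind_\Gamma^\Delta(a)$ shows that $\Delta$ permutes the subspaces $H_S$ according to its permutation action $\sigma_T$ on $T$, carrying $H_S$ unitarily onto $H_{\sigma_T(\delta,S)}$, with each tensor slot twisted by an element of $\kappa^a_0(\Gamma)$ supplied by the cocycle $\rho_T$. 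Hence $\kappa^{\cind_\Gamma^\Delta(a)}_0=\bigoplus_{\mathcal{O}}K_{\mathcal{O}}$, the sum over $\Delta$-orbits $\mathcal{O}$ of nonempty finite subsets of $T$, with $K_{\mathcal{O}}=\bigoplus_{S\in\mathcal{O}}H_S$ being $\Delta$-invariant; since the image of a finite-dimensional invariant subspace under the orthogonal projection onto a summand is a finite-dimensional invariant subspace of that summand, it is enough to show each $K_{\mathcal{O}}$ is weakly mixing.

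For (1), when $[\Delta:\Gamma]=\infty$ every orbit $\mathcal{O}$ of a nonempty finite $S$ is infinite: the pointwise stabilizer of $S$ equals $\bigcap_{t\in S}t\Gamma t^{-1}$, of infinite index in $\Delta$, so the setwise stabilizer $\Delta_S$ has infinite index. If $V\subseteq K_{\mathcal{O}}$ were a nonzero finite-dimensional $\Delta$-invariant subspace, let $p_S$ be the orthogonal projection onto $H_S$ and $P_V$ the projection onto $V$; from $\kappa(\delta)p_S\kappa(\delta)^{*}=p_{\sigma_T(\delta,S)}$ and invariance of $V$ one gets that $S\mapsto\mathrm{tr}(P_Vp_S|_V)$ is constant on $\mathcal{O}$, while $\sum_{S\in\mathcal{O}}\mathrm{tr}(P_Vp_S|_V)=\dim V<\infty$. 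As $\mathcal{O}$ is infinite this forces $\mathrm{tr}(P_Vp_S|_V)=\sum_j\|p_Se_j\|^2=0$ for every $S$ and every orthonormal basis $(e_j)$ of $V$, hence $V=0$. This is the usual statement that an induced representation from an infinite-index subgroup has no finite-dimensional subrepresentation, and it proves $\cind_\Gamma^\Delta(a)$ is weakly mixing.

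For (2), with $[\Delta:\Gamma]=n<\infty$ the set $T$ is finite and every orbit is finite. For the forward direction (contrapositive): if $a$ is not weakly mixing, take a nonzero finite-dimensional $\kappa^a_0(\Gamma)$-invariant $V\subseteq L^2_0(X)$; then $\bigotimes_{t\in T}V\subseteq H_T$ is nonzero, finite-dimensional, and $\Delta$-invariant (permuting slots and applying elements of $\kappa^a_0(\Gamma)$ both preserve it), so $\cind_\Gamma^\Delta(a)$ is not weakly mixing. For the converse, assume $a$ weakly mixing and fix a nonempty $S\subseteq T$; its pointwise stabilizer $\Delta_S^{0}=\bigcap_{t\in S}t\Gamma t^{-1}$ has finite index in $\Delta$, hence in the setwise stabilizer $\Delta_S$, and for each $t\in S$ the map $\delta\mapsto t^{-1}\delta t$ is an injective homomorphism of $\Delta_S^{0}$ onto the finite-index subgroup $\bigcap_{s\in S}(t^{-1}s)\Gamma(t^{-1}s)^{-1}$ of $\Gamma$; the representation of $\Delta_S^{0}$ on $H_S$ is the tensor product over $t\in S$ of the pullbacks of $\kappa^a_0$ along these homomorphisms. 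Each pullback is weakly mixing by (b) and weak mixing of $a$, so by (a) is the whole tensor product, hence the $\Delta_S$-representation $\rho_S$ on $H_S$ is weakly mixing; then for any finite-dimensional $\Delta$-invariant $V\subseteq K_{\mathcal{O}}$ each $p_S(V)$ is finite-dimensional and $\Delta_S$-invariant, hence $0$, so $V=0$ since $\mathcal{O}$ is finite. Thus $\cind_\Gamma^\Delta(a)$ is weakly mixing.

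For (3) the forward direction is immediate, since $a$ is a factor of $\cind_\Gamma^\Delta(a)|_\Gamma$ via $f\mapsto f(\Gamma)$ and weak mixing passes to factors. For the converse I would decompose the restriction along the $\Gamma$-orbits on $\Delta/\Gamma$, equivalently along the double cosets $\Gamma\backslash\Delta/\Gamma$: writing $\Delta/\Gamma$ as the disjoint union of orbits $\mathcal{O}_\delta=\Gamma\cdot(\delta\Gamma)$ exhibits $X^{\Delta/\Gamma}=\prod_\delta X^{\mathcal{O}_\delta}$ as a product of probability spaces with $\Gamma$ acting diagonally, and a short cocycle computation identifies $\Gamma\curvearrowright X^{\mathcal{O}_\delta}$ with a coinduced action $\cind_{\Gamma\cap\delta\Gamma\delta^{-1}}^{\Gamma}(\beta_\delta)$ whose base action $\beta_\delta$ is, up to an inner twist, a restriction of $a$ (the orbit of $e\Gamma$ is a singleton and contributes $a$ itself). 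Each factor is weakly mixing --- those with $[\Gamma:\Gamma\cap\delta\Gamma\delta^{-1}]=\infty$ by part (1), and the rest by part (2) together with weak mixing of the corresponding restriction of $a$ --- and a (finite or infinite) product of weakly mixing actions is weakly mixing by the same projection argument applied to the Fourier decomposition of the product; hence $\cind_\Gamma^\Delta(a)|_\Gamma$ is weakly mixing. The only genuinely non-formal ingredient in the whole argument is the fact used in (1) that induction from an infinite-index subgroup has no finite-dimensional subrepresentation, which is exactly what makes weak mixing automatic when $[\Delta:\Gamma]=\infty$; the rest is bookkeeping, the most delicate part being to keep track of the conjugated and intersected copies of $\Gamma$ appearing in (2) and (3) so as to be able to apply facts (a) and (b).
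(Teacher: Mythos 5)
Your proofs of (1) and (2) are correct, and since the paper itself only cites Ioana's Lemma 2.2 for this proposition, your argument is a genuinely worked-out proof rather than a paraphrase: the decomposition of $L^2(X^T,\mu^T)$ into the subspaces $H_S=\bigotimes_{t\in S}L^2_0(X)$ permuted by $\Delta$ along its action on $T$, the trace/projection argument on infinite orbits for (1), and the reduction to the pointwise stabilizer $\bigcap_{t\in S}t\Gamma t^{-1}$ together with your facts (a) and (b) for (2) are all sound, as are the index computations behind them. The forward direction of (3) is also fine.

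The converse direction of (3), however, has a genuine gap, located exactly at the words ``together with weak mixing of the corresponding restriction of $a$''. In your Mackey decomposition the block indexed by the $\Gamma$-orbit of $\delta\Gamma$ is $\cind_{\Lambda_\delta}^{\Gamma}(\beta_\delta)$ with $\Lambda_\delta=\Gamma\cap\delta\Gamma\delta^{-1}$ and $\beta_\delta$ the pullback of $a$ along $\lambda\mapsto\delta^{-1}\lambda\delta$, i.e.\ essentially $a$ restricted to $\delta^{-1}\Gamma\delta\cap\Gamma$. To invoke your part (2) when $[\Gamma:\Lambda_\delta]<\infty$ you need $\beta_\delta$ weakly mixing, hence $a|_{\delta^{-1}\Gamma\delta\cap\Gamma}$ weakly mixing; but fact (b) gives this only when $\delta^{-1}\Gamma\delta\cap\Gamma$ has finite index in $\Gamma$, and that does not follow from $[\Gamma:\Gamma\cap\delta\Gamma\delta^{-1}]<\infty$: the indices of $\Gamma\cap\delta\Gamma\delta^{-1}$ and $\Gamma\cap\delta^{-1}\Gamma\delta$ in $\Gamma$ can differ, one finite and one infinite. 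The gap is not repairable by bookkeeping. Take $\Gamma=\F_2=\langle x,y\rangle$, let $\phi\colon\F_2\to B=\langle x, yxy^{-1}\rangle$ be the isomorphism onto this infinite-index rank-two subgroup, and let $\Delta=\langle \F_2,t\mid t\gamma t^{-1}=\phi(\gamma),\ \gamma\in\F_2\rangle$ be the ascending HNN extension. Then $\gamma t^{-1}\Gamma=t^{-1}\phi(\gamma)\Gamma=t^{-1}\Gamma$, so $t^{-1}\Gamma$ is a coset fixed by all of $\Gamma$, and the cocycle computation shows $\gamma$ acts on that coordinate by $\phi(\gamma)^a$. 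Choosing $a$ with $x^a=\mathrm{id}$ and $y^a$ weakly mixing makes $a$ weakly mixing while $\phi(x)^a=\phi(y)^a=\mathrm{id}$, so $f\mapsto f(t^{-1}\Gamma)$ is a nonconstant $\Gamma$-invariant map and $\cind_\Gamma^\Delta(a)|_\Gamma$ is not even ergodic. So the implication you are trying to prove fails for general $\Gamma\leq\Delta$. Note that in every application of part (3) in this paper $\Gamma$ is normal in $\Delta$; there every $\Gamma$-orbit on $\Delta/\Gamma$ is a singleton, each coordinate action is $a$ twisted by the automorphism $\gamma\mapsto\delta^{-1}\gamma\delta$ of $\Gamma$, and your product-of-weakly-mixing-actions argument does close up.
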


We end this section with the connection to the space of weak equivalence classes.
The following result is proven in \cite[Proposition A.1]{K12} and ensures that the co-induction operation is invariant under weak equivalence.
\begin{prop} Let $\Gamma\leq \Delta$ and $a,b\in A(\Gamma,X,\mu)$. If $a\preceq b$, then we have  ${\rm CIND}_\Gamma^\Delta(a)\preceq {\rm CIND}_\Gamma^\Delta(b)$.
\end{prop}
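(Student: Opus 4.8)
The plan is to argue from the combinatorial description of weak containment recorded in Section~2, namely that $a\preceq b$ iff $C_{n,k}(a)\subseteq C_{n,k}(b)$ for all $n,k>1$. By \cref{coindeptrans} the co-induced action is, up to isomorphism, independent of the chosen transversal, so I would fix one transversal $T\subseteq\Delta$ and work throughout with $a_T$ and $b_T$ on $X^{\Delta/\Gamma}$. Given measurable $A_1,\dots,A_n\subseteq X^{\Delta/\Gamma}$, a finite $F\subseteq\Delta$ and $\epsilon>0$, the task is to produce measurable $B_1,\dots,B_n\subseteq X^{\Delta/\Gamma}$ with $\bigl|\mu^{\Delta/\Gamma}(\delta^{a_T}(A_i)\cap A_j)-\mu^{\Delta/\Gamma}(\delta^{b_T}(B_i)\cap B_j)\bigr|<\epsilon$ for all $i,j\le n$ and $\delta\in F$.

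First I would reduce to a finitary situation. Cylinder sets over finite subsets of $\Delta/\Gamma$ are dense in the measure algebra of $X^{\Delta/\Gamma}$, and every measurable subset of a finite power $X^{F_0}$ is approximated in measure by a finite union of ``rectangles'' $\prod_{c\in F_0}P_{l_c}$ for a single finite partition $P=(P_1,\dots,P_r)\in\p_r$ of $X$ (refine to a common partition). Since $\delta^{a_T}$ is measure preserving and $|\mu(U\cap V)-\mu(U'\cap V')|\le\mu(U\triangle U')+\mu(V\triangle V')$, at the cost of $\epsilon/2$ I may assume each $A_i$ is a cylinder over a common finite $F_0\subseteq\Delta/\Gamma$ whose base $A_i''\subseteq X^{F_0}$ is a finite union of $P$-rectangles for one fixed $P$. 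Next I would localize: set $F_1=F_0\cup\bigcup_{\delta\in F}\sigma_T(\delta,F_0)$ (finite in $\Delta/\Gamma$) and let $K\subseteq\Gamma$ be the finite symmetric set containing $e$ and all cocycle values $\rho_T(\delta,c)^{\pm1}$ with $\delta\in F$, $c\in F_0$. From the defining formula $(\delta\cdot^{a_T}f)(c)=\rho_T(\delta^{-1},c)^{-1}\cdot^af(\sigma_T(\delta^{-1},c))$, the set $\delta^{a_T}(A_i')$ is again a cylinder, now over $\sigma_T(\delta,F_0)\subseteq F_1$.

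The heart of the argument is then a decoupling computation. For $\delta\in F$ and a pair of rectangles $\prod_cP_{l_c}\subseteq A_i''$, $\prod_cP_{m_c}\subseteq A_j''$, the event ``$\delta^{-1}\cdot^{a_T}f$ lies in the first rectangle and $f$ lies in the second'' becomes, after the substitution $c\mapsto\sigma_T(\delta,c)$ on $F_0$, a conjunction of one-coordinate conditions: at each coordinate $s\in F_1$ it has the form $x_s\in g^a(P_l)$ for some $g\in K$ (from the first rectangle, permuted and twisted by the cocycle) and/or $x_s\in P_m$ (from the second). Since these are cylinders over $F_1$ and $\mu^{\Delta/\Gamma}$ is a product measure, the measure of this event is $\prod_{s\in F_1}$ of the corresponding one-variable numbers, each of which is $\mu(g^a(P_l)\cap P_m)$, $\mu(g^a(P_l))=\sum_m\mu(g^a(P_l)\cap P_m)$, $\mu(P_m)=\sum_l\mu(e^a(P_l)\cap P_m)$, or $1$. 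Summing over the finitely many pairs of rectangles gives
\[\mu^{\Delta/\Gamma}(\delta^{a_T}(A_i')\cap A_j')=\Phi_{\delta,i,j}\!\bigl(M_{N,r}^{P}(a)\bigr),\]
where $N$ is large enough that $K\subseteq\{\gamma_0,\dots,\gamma_{N-1}\}$ and $\Phi_{\delta,i,j}\colon[0,1]^{N\times r\times r}\to[0,1]$ is a fixed polynomial in the entries (a finite sum of finite products of coordinate projections) depending only on the combinatorial data $F_0$, $F_1$, the relevant values of $\sigma_T$ and $\rho_T$, and which rectangles form $A_i''$ and $A_j''$ — crucially, not on the action.

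Finally, since $a\preceq b$ we have $M_{N,r}^{P}(a)\in C_{N,r}(a)\subseteq C_{N,r}(b)$, so there is $Q=(Q_1,\dots,Q_r)\in\p_r$ with $M_{N,r}^{Q}(b)$ as close to $M_{N,r}^{P}(a)$ as desired. Taking $B_i$ to be the cylinder over $F_0$ whose base is the union of the corresponding $Q$-rectangles (the same index set of rectangles as for $A_i''$), the identical computation yields $\mu^{\Delta/\Gamma}(\delta^{b_T}(B_i)\cap B_j)=\Phi_{\delta,i,j}(M_{N,r}^{Q}(b))$, and uniform continuity of the finitely many polynomials $\Phi_{\delta,i,j}$ on the compact cube makes all the differences $<\epsilon/2$; together with the first reduction this gives $\cind_\Gamma^\Delta(a)\preceq\cind_\Gamma^\Delta(b)$. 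The main obstacle is precisely the decoupling step: one must carry out the coordinate bookkeeping — the permutation $\sigma_T(\delta,\cdot)$ acting on $F_0\subseteq F_1$, the cocycle twists $\rho_T(\delta,\cdot)\in K$, and the coordinatewise factorization of distinct rectangles under the product measure — carefully enough to be confident that $\mu^{\Delta/\Gamma}(\delta^{a_T}(A_i')\cap A_j')$ is genuinely a single action-independent polynomial in the finitely many numbers $\mu(\gamma^a(P_l)\cap P_m)$. (One could instead assemble the proof from the known preservation of weak containment under products plus an explicit description of $\cind_\Gamma^\Delta(a)$ restricted to a finite block of coordinates, but that comes down to the same computation.) Everything else is routine measure-algebra approximation.
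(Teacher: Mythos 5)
Your proposal is correct, and it is essentially the standard argument: the paper itself gives no proof here but defers to \cite{K12}*{Proposition A.1}, whose proof is the same reduction to cylinder sets with bases built from a single finite partition $P$, followed by the observation that the product structure of $\mu^{\Delta/\Gamma}$ together with the cocycle formula expresses $\mu^{\Delta/\Gamma}(\delta^{a_T}(A_i)\cap A_j)$ as an action-independent polynomial in the entries of $M^{P}_{N,r}(a)$, which one then transfers to $b$ via $C_{N,r}(a)\subseteq C_{N,r}(b)$. The decoupling step you flag as the main obstacle does go through, since $\sigma_T(\delta,\cdot)$ is a bijection of the coset space, so each coordinate of $F_1$ carries at most one condition coming from each of the two rectangles and the product measure factors coordinatewise as you describe.
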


Thus we have that co-induction descends to a well defined operation on weak equivalence classes. So, if $\Gamma\leq \Delta$ we may for each $\ul{a}\in \ul{A}(\Gamma,X,\mu)$ assign $\ul{\cind}_\Gamma^\Delta(\ul{a})=\ul{\cind_\Gamma^\Delta(a)}\in \ul{A}(\Delta,X^{\Delta/\Gamma},\mu^{\Delta/\Gamma})$. In \cref{cont} we will address the question of continuity of the operation $\ul{\cind}_\Gamma^\Delta\colon  \ul{A}(\Gamma,X,\mu)\rightarrow \ul{A}(\Delta,X^{\Delta/\Gamma},\mu^{\Delta/\Gamma})$.

\section{Invariant random subgroups}
In this section we will discuss the notion of invariant random subgroups, which can be seen as a random version of normal subgroups.

Fix a countable group $\Gamma$ and let $\sub(\Gamma)\subseteq 2^\Gamma$ denote the set of all subgroups of $\Gamma$. Note that this is a closed subset and thus $\sub(\Gamma)$ is a compact Polish space. Moreover, we have a natural continuous action $\Gamma\ac \sub(\Gamma)$ given by $\gamma\cdot \Lambda = \gamma\Lambda\gamma^{-1}$. An {\bf invariant random subgroup} of $\Gamma$ is a $\Gamma$-invariant  probability Borel measure on $\sub(\Gamma)$. We denote by $\irs(\Gamma)$ the set of all invariant random subgroups and we will use the abbreviation IRS for ``invariant random subgroup''.

It is easily seen that for any normal subgroup $\Lambda\unlhd \Gamma$, the Dirac measure $\delta_\Lambda$ is an IRS. For a more interesting example, let $a\in A(\Gamma,X,\mu)$ and consider the map $\stab_a \colon X\rightarrow \sub(\Gamma) $, which assigns to each point $x\in X$ its stabilizer subgroup $\stab_a(x)$. It is straightforward to verify that this map is Borel and $\Gamma$-equivariant. Therefore we have that the pushforward of $\mu$ via this map, $(\stab_a)_*\mu$, is an IRS of $\Gamma$. The IRS $(\stab_a)_*\mu$ is denoted by $\type(a)$. Actually it is proven in \cite[Proposition 13]{AGV14} that 
any IRS on $\Gamma$ arises in this manner. Thus the study of measure preserving actions and invariant random subgroups are closely related.

The first part of the result below is proved in \cite[Section 4]{AE11} (see also \cite[Theorem 5.2]{T-D15b}) and states that the map $\type$ is invariant under weak equivalence. A proof of the second part is found in \cite[Proposition 5.1]{B16} (see also \cite[Theorem 1.8]{T-D15b} and \cite[Theorem 9]{E12}).
\begin{thm} Let $\Gamma$ be a countable group and $a,b\in A(\Gamma,X,\mu)$. 
\begin{itemize}
\item[(1)] If $a\simeq b$, then $\type(a)=\type(b)$.
\item[(2)] If $\Gamma$ is amenable and $\type(a)=\type(b)$, then $a\simeq b$.
\end{itemize} 
\end{thm}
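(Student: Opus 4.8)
For part (1) the plan is to reduce everything to the identity $\type(a)(N_F^\Gamma)=\mu\!\left(\bigcap_{\gamma\in F}\fix_a(\gamma)\right)$ for finite $F\subseteq\Gamma$, which holds because $F\subseteq\stab_a(x)$ exactly when $\gamma^a(x)=x$ for every $\gamma\in F$. Since (as recalled above) an IRS is determined by its values on the sets $N_F^\Gamma$, it then suffices to show that for each finite $F$ the number $\mu(\bigcap_{\gamma\in F}\fix_a(\gamma))$ depends only on the data $(C_{n,k}(a))_{n,k}$, hence only on $\ul{a}$. Writing $F=\{\gamma_{m_1},\dots,\gamma_{m_r}\}$ in the fixed enumeration of $\Gamma$ and taking $n>\max_l m_l$ and $k=3r+1$, I would prove the formula
\[\mu\!\left(\bigcap_{\gamma\in F}\fix_a(\gamma)\right)=\min_{M\in C_{n,k}(a)}\ \sum_{i<k}\ \min_{1\le l\le r}M(m_l,i,i),\]
whose right-hand side, being a continuous function of $C_{n,k}(a)$, is manifestly a weak-equivalence invariant; using $M_{n,k}^P(a)(m_l,i,i)=\mu(\gamma_{m_l}^a(A_i)\cap A_i)$ it equals $\inf_{P\in\p_k}\sum_{i<k}\min_l\mu(\gamma_{m_l}^a(A_i)\cap A_i)$.

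For the inequality ``$\le$'' I would put $Y=\bigcap_{\gamma\in F}\fix_a(\gamma)$ and note that for any partition $P=(A_0,\dots,A_{k-1})$ each $\gamma_{m_l}^a$ fixes $A_i\cap Y$ pointwise, so $\mu(\gamma_{m_l}^a(A_i)\cap A_i)\ge\mu(A_i\cap Y)$; taking the minimum over $l$ and summing over $i$ gives the lower bound $\mu(Y)$. For ``$\ge$'' I would exhibit an optimal partition: split $X\setminus Y$ into the Borel sets $Z_l=\{x:\gamma_{m_l}^a(x)\ne x\text{ and }\gamma_{m_{l'}}^a(x)=x\text{ for all }l'<l\}$, $1\le l\le r$; the Borel graph on $Z_l$ joining $x$ to $\gamma_{m_l}^a(x)$ and to $(\gamma_{m_l}^{-1})^a(x)$ (whenever these lie in $Z_l$) has vertex degrees $\le 2$, hence by the Kechris--Solecki--Todorcevic theorem admits a Borel $3$-colouring $Z_l=B_{l,1}\sqcup B_{l,2}\sqcup B_{l,3}$, and one checks $\gamma_{m_l}^a(B_{l,c})\cap B_{l,c}=\emptyset$. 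For the partition of $X$ into $Y$ together with the $3r$ sets $B_{l,c}$ (padded by empty pieces to have exactly $k=3r+1$ parts), the quantity $\sum_{i<k}\min_l\mu(\gamma_{m_l}^a(A_i)\cap A_i)$ equals $\mu(Y)$: on the piece $Y$ each term is $\mu(Y)$, and on the piece $B_{l,c}$ the $l$-th term is $0$. This proves the formula, hence part (1).

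For part (2), since the hypothesis is symmetric it suffices to show that $\type(a)=\type(b)$ implies $a\preceq b$, i.e.\ $C_{n,k}(a)\subseteq C_{n,k}(b)$ for all $n,k$. The plan rests on the remark that $\stab_a(x)$ determines and is determined by the pointed $\Gamma$-set $\Gamma/\stab_a(x)$ (the orbit of $x$), so $\theta:=\type(a)=\type(b)$ says precisely that $a$ and $b$ have the same invariant random pointed orbit (equivalently, the same invariant random Schreier graph). I would then fix $n,k$, $\epsilon>0$ and a partition $P$ of $X$ realizing a target $M\in C_{n,k}(a)$, apply the Ornstein--Weiss quasi-tiling theorem for the amenable group $\Gamma$ to obtain finite sets $\epsilon$-quasi-tiling the orbit structure of both $a$ and $b$, observe that $P$ then induces an invariant law on finite ``$k$-coloured tiles'' whose underlying (uncoloured) statistics are a function of $\theta$ alone, and finally, since $b$ realizes the same $\theta$, transplant this coloured-tile law into $b$ by painting the $b$-tiles according to it, producing a partition $Q$ of $(X,\mu)$ with $M_{n,k}^Q(b)$ within $\epsilon$ of $M$; letting $\epsilon\to0$ gives $M\in C_{n,k}(b)$.

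The main obstacle is part (2), and concretely the transplantation step: passing from the purely pointwise information carried by $\type$ (the law of a single vertex's stabilizer) to Følner-scale coloured-pattern statistics, and then realizing an arbitrary consistent pattern law inside \emph{any} action of the prescribed type. This is exactly where amenability is essential---the Ornstein--Weiss quasi-tiling theorem substitutes for the generally unavailable device of approximating the action by genuine finite sub-systems---which is also why the conclusion fails without amenability, e.g.\ for the continuum of pairwise non-weakly-equivalent free actions of $\F_2$, all of which have type $\delta_{\{e\}}$. Part (1), by contrast, is the elementary direction, and the argument sketched above is self-contained modulo the Borel $3$-colouring of bounded-degree Borel graphs.
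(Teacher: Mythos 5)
The paper does not actually prove this theorem; it quotes part (1) from \cite{AE11}/\cite{T-D15b} and part (2) from \cite{B16}/\cite{T-D15b}/\cite{E12}, so there is no internal proof to compare against. Your part (1) is a correct, complete and self-contained argument, which is more than the paper offers. The identity $\type(a)(N_F^\Gamma)=\mu\bigl(\bigcap_{\gamma\in F}\fix_a(\gamma)\bigr)$ is right, an IRS is determined by these values (the $\pi$--$\lambda$ argument of Section 4), and your formula $\mu(Y)=\min_{M\in C_{n,k}(a)}\sum_{i<k}\min_{l}M(m_l,i,i)$ checks out: the inequality $\geq\mu(Y)$ holds for every partition because $A_i\cap Y\subseteq \gamma_{m_l}^a(A_i)\cap A_i$ and passes to the closure by continuity of the functional (the minimum over the compact set $C_{n,k}(a)$ is attained), while the Kechris--Solecki--Todorcevic $3$-colouring of the degree-$\leq 2$ graphs on the $Z_l$ produces a partition attaining $\mu(Y)$ exactly. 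Since the right-hand side depends only on $C_{n,k}(a)$, it is invariant under weak equivalence, which gives (1). (Padding with empty pieces is harmless since $C_{n,k}(a)$ is a closure.)

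Part (2), however, is only a strategy outline, and the step you yourself flag as the main obstacle --- ``transplant this coloured-tile law into $b$'' --- hides two genuine gaps. First, Ornstein--Weiss quasi-tiling is a statement about $\Gamma$ itself (equivalently, about free actions); for non-free actions you must quasi-tile the Schreier graphs of the orbits $\Gamma/\stab(x)$ by a Borel, equivariant procedure that is a function of the Schreier graph alone, so that the distribution of tile shapes is genuinely determined by $\theta$ and can be coupled between $a$ and $b$; this needs an argument (e.g.\ via hyperfiniteness of the orbit equivalence relation) that you do not give. Second, and more seriously, ``painting the $b$-tiles according to'' the coloured-tile law of $(a,P)$ requires, inside $(X,\mu,b)$, a measurable assignment of colourings to tiles that is approximately independent across tiles with a prescribed conditional law given the shape; such randomness is simply not available inside $b$ itself (the tiles are functions of the orbit structure, and $b$ need not have any independent ``vertical'' randomness over it). The standard repair is to interpolate through a canonical Bernoulli-type action $s_\theta$ built over $\theta$ and to prove both $a\preceq s_\theta$ (the Ornstein--Weiss direction) and $s_\theta\preceq b$; the latter is an Ab\'ert--Weiss-type theorem for invariant random subgroups and is a substantial result in its own right, entirely absent from your sketch. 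As written, part (2) is not proved; it reproduces the outline of \cite{T-D15b} and \cite{E12} but omits exactly the content that makes those proofs work.
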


So from the above we have a well-defined surjective map $$\type\colon \ul{A}(\Gamma,X,\mu)\rightarrow \irs(\Gamma)$$ given by $\type(\ul{a})=\type(a)$. Moreover, whenever $\Gamma$ is amenable this map is a bijection. This clearly fails in the case of non-amenable groups, as these have several weakly inequivalent free actions. 

We have that $\irs(\Gamma)$ is a closed subset of the compact Polish space $P(\sub(\Gamma))$ of all probability Borel measure on $\sub(\Gamma)$.  Thus $\irs(\Gamma)$ is also a compact Polish space in the subspace topology. For each finite $F\subseteq\Gamma$ let
\[N_F^\Gamma =\set{\Lambda\in \sub(\Gamma)\mid F\subseteq \Gamma}.\]
Then the sets $N_F^\Gamma$, for  $F\subseteq \Gamma$ finite, constitute a family of clopen subsets which is closed under finite intersections and generates the Borel structure of $\sub(\Gamma)$. Thus it follows from the $\pi$-$\lambda$ Theorem (see \cite[Theorem 10.1]{K95})
that if $\theta,\eta\in \irs(\Gamma)$ agree on these sets, then $\theta=\eta$. Using this together with the compactness of the space we obtain the following lemma.
\begin{lem}\label{convbasic} Let $\Gamma$ be a countable group and $(\theta_n)_n,\theta\in \irs(\Gamma)$. Then $\theta_n\rightarrow \theta$ as $n\rightarrow \infty$ if and only if 
$\theta_n(N_F^\Gamma)\rightarrow \theta(N_F^\Gamma)$ as $n\rightarrow \infty$,
for all finite $F\subseteq \Gamma$.
\end{lem}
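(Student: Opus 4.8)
The plan is to prove the two implications separately, using for the nontrivial direction the fact already recorded above (via the $\pi$-$\lambda$ theorem) that an element of $\irs(\Gamma)$ is determined by its values on the sets $N_F^\Gamma$, together with the compactness of $\irs(\Gamma)$.

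For the forward direction I would observe that each $N_F^\Gamma$ is clopen in $\sub(\Gamma)$, so its indicator function $\mathbf{1}_{N_F^\Gamma}$ is continuous on the compact space $\sub(\Gamma)$. Hence, directly from the definition of the weak$^*$-topology, $\theta_n\to\theta$ gives
\[
\theta_n(N_F^\Gamma)=\int \mathbf{1}_{N_F^\Gamma}\, d\theta_n \longrightarrow \int \mathbf{1}_{N_F^\Gamma}\, d\theta = \theta(N_F^\Gamma)
\]
for every finite $F\subseteq\Gamma$.

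For the converse, suppose $\theta_n(N_F^\Gamma)\to\theta(N_F^\Gamma)$ for all finite $F\subseteq\Gamma$. Since $\irs(\Gamma)$ is compact metrizable, it suffices to show that $\theta$ is the only subsequential limit of $(\theta_n)_n$. So let $(\theta_{n_k})_k$ be any subsequence converging to some $\eta\in\irs(\Gamma)$. Applying the forward direction to this convergent subsequence yields $\eta(N_F^\Gamma)=\lim_k\theta_{n_k}(N_F^\Gamma)=\theta(N_F^\Gamma)$ for all finite $F\subseteq\Gamma$. As the family $\set{N_F^\Gamma\mid F\subseteq\Gamma \text{ finite}}$ is closed under finite intersections and generates the Borel $\sigma$-algebra of $\sub(\Gamma)$, the $\pi$-$\lambda$ theorem forces $\eta=\theta$. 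Thus every convergent subsequence of $(\theta_n)_n$ has limit $\theta$, and since $\irs(\Gamma)$ is a compact metric space this implies $\theta_n\to\theta$.

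I do not expect a serious obstacle; the one point that genuinely needs care is that mere convergence on a generating family of sets does not in general imply weak$^*$-convergence, so one must invoke the compactness of $\irs(\Gamma)$ (i.e. a subsequence-extraction argument) rather than trying to pass directly from the $N_F^\Gamma$ to arbitrary continuous functions. An alternative route that sidesteps the subsequence argument: the Boolean algebra generated by the $N_F^\Gamma$ consists of clopen sets, separates points of $\sub(\Gamma)$, and contains the constants, so by Stone--Weierstrass its real span is dense in $C(\sub(\Gamma))$; convergence of $\int f\,d\theta_n$ then propagates from the indicators $\mathbf{1}_{N_F^\Gamma}$ to all of $C(\sub(\Gamma))$ by linearity together with the uniform bound $\norm{\theta_n}=1$.
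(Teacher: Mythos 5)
Your proof is correct and takes essentially the same approach as the paper: the forward direction is the clopen-set/Portmanteau observation, and the converse is the same compactness-plus-subsequence-extraction argument combined with the fact that an IRS is determined by its values on the sets $N_F^\Gamma$ (the paper merely phrases it contrapositively, extracting a subsequential limit $\eta\neq\theta$ and exhibiting an $N_F^\Gamma$ on which $\eta$ and $\theta$ disagree). Your Stone--Weierstrass alternative is also valid but is not needed and is not what the paper does.
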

\begin{proof}
The left to right implication follows directly from The Portmanteau Theorem (see \cite[Theorem 17.20]{K95}). For the other implication, assume $\theta_n \nrightarrow \theta$ as $n\rightarrow \infty$. By compactness there is a subsequence $(\theta_{n_i})_i$ and $\eta\in \irs(\Gamma)$ such that $\eta\neq \theta$ and $\theta_{n_i}\rightarrow \eta$. Since $\eta\neq \theta$ there exists finite $F\subseteq \Gamma$ such that $\eta(N_F^\Gamma)\neq \theta(N_F^\Gamma)$. Thus $\theta_{n_i}(N_F^\Gamma)\nrightarrow \theta(N_F^\Gamma)$ when $n\rightarrow \infty$, as wanted.
\end{proof}
Thus in order to study topological properties of $\irs(\Gamma)$ it  suffices to consider these basic sets. For $\gamma\in \Gamma$, we write $N_\gamma^\Gamma$ instead of $N_{\set{\gamma}}^\Gamma$. Note that in this case we have $\type(a)\left(N_\gamma^\Gamma\right)=\mu\left(\fix_a(\gamma)\right)$ for all $a\in A(\Gamma,X,\mu)$, where $\fix_a(\gamma)=\set{x\in X\mid \gamma \cdot^a x=x}$.

The following theorem is proved in \cite[Theorem 5.2]{T-D15b}.
\begin{thm} Let $\Gamma$ be a countable group. The map $\type\colon \ul{A}(\Gamma,X,\mu)\rightarrow \irs(\Gamma)$ is continuous.
\end{thm}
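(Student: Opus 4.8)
The plan is to use \cref{convbasic}: since convergence in $\irs(\Gamma)$ is detected on the basic clopen sets $N_F^\Gamma$ for finite $F\subseteq \Gamma$, and since any $F$ is a finite union of singletons with $N_F^\Gamma = \bigcap_{\gamma\in F} N_\gamma^\Gamma$, it suffices to show that for each fixed $\gamma\in \Gamma$ (or each fixed finite tuple) the map $\ul{a}\mapsto \type(a)(N_\gamma^\Gamma)$ is continuous on $\ul{A}(\Gamma,X,\mu)$. The key identity is the one recorded just before the statement: $\type(a)(N_\gamma^\Gamma)=\mu(\fix_a(\gamma))$. So the whole problem reduces to showing that $\ul{a}\mapsto \mu(\fix_a(\gamma))$ — and more generally $\ul{a}\mapsto \mu\big(\bigcap_{\gamma\in F}\fix_a(\gamma)\big)$ — is continuous with respect to the Ab\'ert--Elek topology on weak equivalence classes. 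First I would reduce the finite-$F$ case to the singleton case by the elementary observation that $x$ is fixed by every element of $F$ iff it is fixed by every element of the finite set $\langle F\rangle_\Gamma$ ... actually more simply, $\bigcap_{\gamma\in F}\fix_a(\gamma) = \fix_a$ of the subgroup generated by $F$, and one can just carry $F$ along in the estimates below; I will phrase everything for a finite $F$ from the start.

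Next I would extract, from the combinatorial data $C_{n,k}(a)$, the quantity $\mu(\fix_a(\gamma))$. The idea: $x\notin\fix_a(\gamma)$ means $\gamma^a(x)\neq x$, and by a measure-algebra/Rokhlin-type argument one can find, for any $\epsilon$, a Borel partition $P=(A_0,\dots,A_{k-1})$ (with $k$ depending on $\epsilon$) such that $\sum_i \mu(\gamma^a(A_i)\cap A_i)$ approximates $\mu(\fix_a(\gamma))$ from below to within $\epsilon$ — intuitively, refine a partition enough that on the non-fixed part the images $\gamma^a(A_i)$ are almost disjoint from the $A_i$. Conversely, for \emph{every} partition $P$ one has $\sum_i \mu(\gamma^a(A_i)\cap A_i)\leq \mu(\fix_a(\gamma)) + (\text{error controlled by the mesh of }P)$ — no, that inequality fails in general; the clean statement is only $\mu(\fix_a(\gamma))=\sup_P \sum_i\mu(\gamma^a(A_i)\cap A_i)$, the supremum being over all finite Borel partitions $P$. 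Once this sup formula is established, continuity follows: writing $m_\gamma(a,P)=\sum_i \mu(\gamma^a(A_i)\cap A_i)$, the function $\ul{a}\mapsto \sup_{k}\sup_{M\in C_{1,k}(a)}\sum_i M(\gamma\text{-index},i,i)$ equals $\mu(\fix_a(\gamma))$, and it is a sup of functions each of which is continuous in $\ul a$ because $C_{1,k}(a)$ varies continuously in the Vietoris topology (by definition of the metric $d$ on $\ul A$) and $M\mapsto \sum_i M(m,i,i)$ is continuous on $[0,1]^{1\times k\times k}$; a sup of a uniformly-in-$k$ "continuous on compacta, Vietoris-continuous in parameter" family is \emph{lower} semicontinuous, but one also needs upper semicontinuity. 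For that I would instead observe that for \emph{fixed} $k$, $\sup_{M\in C_{n,k}(a)} g(M)$ is genuinely continuous in $\ul a$ (Hausdorff-continuity of $C_{n,k}$ plus uniform continuity of $g$), and then get upper semicontinuity of the overall sup from the fact that the $k$-term approximants increase to the limit \emph{uniformly in a way controlled by} weak equivalence — i.e., $\mu(\fix_a(\gamma)) - \sup_{M\in C_{1,k}(a)}g_k(M)\to 0$ at a rate independent of $a$ within a weak-equivalence-continuity modulus.

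The main obstacle, therefore, is exactly this last point: proving that the monotone approximation $\sup_{M\in C_{1,k}(a)} g_k(M)\uparrow \mu(\fix_a(\gamma))$ is \emph{uniform} over $\ul a$, or equivalently finding a single expression for $\mu(\fix_a(\gamma))$ in terms of the $C_{n,k}(a)$ that is manifestly continuous. A cleaner route that avoids the uniformity headache: show directly that if $\ul{a_j}\to\ul a$ then $\mu(\fix_{a_j}(\gamma))\to\mu(\fix_a(\gamma))$ by proving both inequalities. For $\liminf \mu(\fix_{a_j}(\gamma))\geq \mu(\fix_a(\gamma))$: given $\epsilon$, pick $k$ and $P$ with $m_\gamma(a,P)>\mu(\fix_a(\gamma))-\epsilon$; since $C_{1,k}(a_j)\to C_{1,k}(a)$ in Hausdorff distance and $M_{1,k}^P(a)\in C_{1,k}(a)$, there are $P_j$ with $m_\gamma(a_j,P_j)\to m_\gamma(a,P)$, hence $\mu(\fix_{a_j}(\gamma))\geq m_\gamma(a_j,P_j)>\mu(\fix_a(\gamma))-2\epsilon$ eventually. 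For $\limsup \mu(\fix_{a_j}(\gamma))\leq \mu(\fix_a(\gamma))$: here one must rule out stabilizers "appearing in the limit". The key lemma I would prove is a \emph{uniform} separation estimate: for any $a$ and any $\delta>0$ there is $k_0$ such that for all $k\geq k_0$ and all $b\in A(\Gamma,X,\mu)$, $\sup_{M\in C_{1,k}(b)}g_k(M)\geq \mu(\fix_b(\gamma))-\delta$ — this is where the uniformity is genuinely needed and is the heart of the matter; it should follow from a counting/pigeonhole argument showing that a suitable fixed large $k$ (depending only on $\delta$) suffices for \emph{every} action, essentially because one can partition $[0,1]$ into $k$ dyadic intervals and the failure of $\gamma^b(A_i)$ to meet $A_i$ on the non-fixed set is controlled purely by how finely one cuts, uniformly in $b$. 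Granting that lemma, $\limsup_j\mu(\fix_{a_j}(\gamma)) \leq \limsup_j\big(\sup_{M\in C_{1,k}(a_j)}g_k(M)+\delta\big) = \sup_{M\in C_{1,k}(a)}g_k(M)+\delta\leq \mu(\fix_a(\gamma))+\delta$, and letting $\delta\to 0$ finishes. Finally I would assemble: continuity of $\ul a\mapsto \mu(\bigcap_{\gamma\in F}\fix_a(\gamma)) = \type(a)(N_F^\Gamma)$ for all finite $F$, combined with \cref{convbasic}, yields continuity of $\type$.
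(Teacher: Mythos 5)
Your reduction via \cref{convbasic} to the continuity of $\ul{a}\mapsto \type(a)(N_F^\Gamma)=\mu\bigl(\bigcap_{\gamma\in F}\fix_a(\gamma)\bigr)$ is the right first move, but the central identity you build everything on is backwards, and not in a repairable-by-relabeling way. For \emph{every} partition $P=(A_0,\dots,A_{k-1})$ one has $\fix_a(\gamma)\cap A_i\subseteq \gamma^a(A_i)\cap A_i$ (if $\gamma^a x=x$ and $x\in A_i$ then $x=\gamma^a x\in\gamma^a(A_i)$), hence $\sum_i\mu(\gamma^a(A_i)\cap A_i)\geq\mu(\fix_a(\gamma))$: every partition \emph{over}-estimates. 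In particular $\sup_P\sum_i\mu(\gamma^a(A_i)\cap A_i)=1$ for every $a$ and $\gamma$ (take a partition with one piece of measure $1-\epsilon$), so your ``clean statement'' $\mu(\fix_a(\gamma))=\sup_P\sum_i\mu(\gamma^a(A_i)\cap A_i)$ is false, and the steps that rely on it collapse: the inequality $\mu(\fix_{a_j}(\gamma))\geq m_\gamma(a_j,P_j)$ in your $\liminf$ argument is the wrong way around, your ``key lemma'' $\sup_{M\in C_{1,k}(b)}g_k(M)\geq\mu(\fix_b(\gamma))-\delta$ is trivially true but carries no information, and the final inequality $\sup_{M\in C_{1,k}(a)}g_k(M)\leq\mu(\fix_a(\gamma))+\delta$ is false.

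The correct identity is the \emph{infimum} formula $\mu(\fix_a(\gamma))=\inf_P\sum_i\mu(\gamma^a(A_i)\cap A_i)$ (and its analogue for finite $F$), and the genuine content --- the uniformity in $a$ that you correctly isolate as the heart of the matter --- is that this infimum is already attained, up to the exact value $\mu(\fix_a(\gamma))$, by partitions with a number of pieces bounded in terms of $|F|$ only: the graph on $X$ joining $x$ to $\gamma^a x$ for $\gamma\in F$ whenever these differ has bounded degree, hence admits a Borel proper coloring with boundedly many colors (Kechris--Solecki--Todor\v{c}evi\'{c}); adding the common fixed locus to one color class yields a partition $P$ with $\sum_i\mu\bigl(\bigcap_{\gamma\in F}\gamma^a(A_i)\cap A_i\bigr)=\mu\bigl(\bigcap_{\gamma\in F}\fix_a(\gamma)\bigr)$. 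With that, $\type(a)(N_F^\Gamma)=\min_{M\in C_{n,k}(a)}g(M)$ for a single fixed $k$ and a continuous $g$, and continuity in the Vietoris/Hausdorff sense follows at once --- no semicontinuity bookkeeping is needed. So the architecture is salvageable, but as written the proof does not go through. (For what it is worth, the paper does not prove this statement either; it quotes it from Tucker-Drob, \cite[Theorem 5.2]{T-D15b}.)
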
 
\noindent
In particular it follows that, when $\Gamma$ is amenable, $\type\colon \ul{A}(\Gamma,X,\mu)\rightarrow \irs(\Gamma)$ is a homeomorphism. Moreover, if we let $$\ul{\text{FR}}(\Gamma,X,\mu)=\type^{-1}(\delta_{\set{e}}),$$
we obtain that $\ul{\text{FR}}(\Gamma,X,\mu)$ is a closed subspace of $\ul{A}(\Gamma,X,\mu)$. Note that $\ul{\text{FR}}(\Gamma,X,\mu)$ is the space consisting of all the weak equivalence classes of the free actions of $\Gamma$.

\section{Co-induction of invariant random subgroups}
In this section we will use the close connection between actions and invariant random subgroups to construct a co-induction operation on the invariant random subgroups.

In the following, whenever we have two groups $\Gamma\leq \Delta$, we let $$\core_\Delta(\Gamma)=\bigcap_{\delta\in \Delta}\delta\Gamma\delta^{-1}.$$  
The subgroup $\core_\Delta(\Gamma)$ is called the {\bf normal core} of $\Gamma$ in $\Delta$. It is straightforward to prove that $\core_\Delta(\Gamma)=\cap_{t\in T}t\Gamma t^{-1}$ for any transversal $T\subseteq \Delta$ of the left cosets $\Delta/\Gamma$.

\begin{thm}\label{coinducedIRS} Let $\Gamma\leq \Delta$ and $T\subseteq \Delta$ a transversal for the left cosets in $\Delta/\Gamma$. There exists a co-induction operation $\cind_\Gamma^\Delta\colon \irs(\Gamma)\rightarrow\irs(\Delta)$ such that 
\begin{equation*}
\cind_\Gamma^\Delta(\theta)(N_F^\Delta)=\begin{cases}
0 \qquad &\text{if} \quad F\nsubseteq\core_\Delta(\Gamma),\\
\prod_{t\in T}\theta(N_{t^{-1}Ft}^\Gamma) \qquad &\text{if} \quad F\subseteq \core_\Delta(\Gamma),
\end{cases}
\end{equation*}
 and
$\cind_\Gamma^\Delta(\type(a))=\type(\cind_\Gamma^\Delta(a))$, for all $a\in \text{A}(\Gamma,X,\mu)$.
\end{thm}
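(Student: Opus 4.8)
The plan is to construct $\cind_\Gamma^\Delta$ directly via the correspondence between IRS's and actions, exploiting that co-induction of actions is already defined. First, given $\theta\in\irs(\Gamma)$, use the result of \cite{AGV14} to pick $a\in\act(\Gamma,X,\mu)$ with $\type(a)=\theta$, form $\cind_\Gamma^\Delta(a)\in\act(\Delta,X^{\Delta/\Gamma},\mu^{\Delta/\Gamma})$, and \emph{define} $\cind_\Gamma^\Delta(\theta):=\type(\cind_\Gamma^\Delta(a))$. The second displayed identity in the statement then holds by construction; the real work is (i) showing this is well-defined, i.e., independent of the choice of $a$, and (ii) computing $\type(\cind_\Gamma^\Delta(a))(N_F^\Delta)$ to verify the formula. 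Notice that (ii) actually gives (i) for free: the stated formula for $\cind_\Gamma^\Delta(\theta)(N_F^\Delta)$ depends only on the numbers $\theta(N_{t^{-1}Ft}^\Gamma)$, and since the sets $N_F^\Delta$ generate the Borel structure of $\sub(\Delta)$ (by the $\pi$-$\lambda$ argument recalled before \cref{convbasic}), a measure on $\sub(\Delta)$ is determined by these values; so once the formula is established for \emph{every} action $a$ of type $\theta$, well-definedness is automatic. Membership in $\irs(\Delta)$ is immediate since $\type$ of any action is an IRS.

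So the heart of the proof is the computation of $\mu^{\Delta/\Gamma}\big(\fix_{\cind_\Gamma^\Delta(a)}(\delta)\big)$ for $\delta\in\Delta$, and more generally of $\cind_\Gamma^\Delta(\type(a))(N_F^\Delta)=\mu^{\Delta/\Gamma}\big(\{f: \delta^{b}f=f \text{ for all }\delta\in F\}\big)$ where $b=\cind_\Gamma^\Delta(a)$. Here I would work with a fixed transversal $T$ and the explicit formula $(\delta\cdot^{a_T}f)(t)=\rho_T(\delta^{-1},t)^{-1}\cdot^a f(\sigma_T(\delta^{-1},t))$. A coordinate $f(t)$ of $f$ is fixed by $\delta$ precisely when $f(\sigma_T(\delta^{-1},t))$ relates to $f(t)$ via the cocycle; when $\delta\in\core_\Delta(\Gamma)$ one checks $\sigma_T(\delta^{-1},t)=t$ for every $t$ (this is exactly the statement that $\core_\Delta(\Gamma)$ stabilizes every coset, equivalently $\core_\Delta(\Gamma)=\bigcap_t t\Gamma t^{-1}$), so the action becomes ``diagonal'': $(\delta\cdot^{a_T}f)(t)=(t^{-1}\delta t)\cdot^a f(t)$ with $t^{-1}\delta t\in\Gamma$. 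Then $f$ is fixed by $\delta$ iff $f(t)\in\fix_a(t^{-1}\delta t)$ for all $t$, and by the product measure structure, $\mu^T\big(\prod_t \fix_a(t^{-1}\delta t)\big)=\prod_t\mu(\fix_a(t^{-1}\delta t))=\prod_t\theta(N_{t^{-1}\delta t}^\Gamma)$. Intersecting over $\delta\in F$ gives $\prod_t\theta(N_{t^{-1}Ft}^\Gamma)$ when $F\subseteq\core_\Delta(\Gamma)$. If on the other hand some $\delta\in F$ lies outside $\core_\Delta(\Gamma)$, then $\sigma_T(\delta^{-1},t_0)\neq t_0$ for some $t_0$, so being fixed by $\delta$ forces an equality between two \emph{distinct independent} coordinates $f(t_0)$ and $f(\sigma_T(\delta^{-1},t_0))$ (up to the cocycle, which is measure-preserving), an event of $\mu^T$-measure zero since $(X,\mu)$ is non-atomic; hence $\cind_\Gamma^\Delta(\theta)(N_F^\Delta)=0$. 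One must be slightly careful that a single $\delta$ may permute coordinates in long (or infinite) orbits, but it suffices to exhibit one nontrivially-moved coordinate, and the measure-zero conclusion follows from independence of that pair alone.

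Two small supplementary points complete the argument. First, independence of the transversal: this follows from \cref{coindeptrans}, which gives an isomorphism $\cind_\Gamma^\Delta(a)$ built from $T$ versus $S$, hence equal types; alternatively one observes directly that $\prod_{t\in T}\theta(N_{t^{-1}Ft}^\Gamma)$ is unchanged under replacing $t$ by $t\gamma$ for $\gamma\in\Gamma$, since $N_{(t\gamma)^{-1}F(t\gamma)}^\Gamma=\gamma^{-1}\cdot N_{t^{-1}Ft}^\Gamma$ and $\theta$ is conjugation-invariant. Second, the ``concentrates on $\sub(\core_\Delta(\Gamma))$'' clause (in the introduction's version of the theorem) is an immediate consequence: for $\delta\in\Delta\setminus\core_\Delta(\Gamma)$ the above gives $\cind_\Gamma^\Delta(\theta)(N_\delta^\Delta)=0$, so $\cind_\Gamma^\Delta(\theta)$-almost every subgroup of $\Delta$ avoids each such $\delta$, hence (intersecting over the countably many such $\delta$) lies in $\sub(\core_\Delta(\Gamma))$. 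The main obstacle is the clean bookkeeping in the measure-zero case when $F$ is not contained in the core --- making sure the cocycle twist does not interfere with the independence argument and handling the orbit structure of $\langle F\rangle$ acting on $T$ --- but since one only needs a single moved coordinate per offending $\delta$, this stays routine.
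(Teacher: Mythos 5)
Your proposal is correct and follows essentially the same route as the paper: define $\cind_\Gamma^\Delta(\theta)=\type(\cind_\Gamma^\Delta(a))$ for any $a$ with $\type(a)=\theta$, show the coordinates are permuted nontrivially for $\delta\notin\core_\Delta(\Gamma)$ (forcing measure zero by independence and non-atomicity), and observe that for $\delta\in\core_\Delta(\Gamma)$ the action is diagonal with $(\delta\cdot f)(t)=(t^{-1}\delta t)\cdot^a f(t)$, yielding the product formula. The paper packages the second step as the type of the product action $\prod_{t\in T}a_t$, but this is the same computation, and your treatment of well-definedness and transversal-independence matches the paper's.
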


\begin{proof}
Let $\theta\in \text{IRS}(\Gamma)$ and fix $a\in A(\Gamma, X,\mu)$ such that $\type(a)=\theta$. We will then show that $\type(\cind_\Gamma^\Delta(a))\in \text{IRS}(\Delta)$ satisfies
\begin{equation*}
\type(\cind_\Gamma^\Delta(a))(N_F^\Delta)=\begin{cases}
0 \qquad &\text{if} \quad F\nsubseteq\text{core}_\Delta(\Gamma),\\
\prod_{t\in T}\theta(N_{t^{-1}Ft}^\Gamma) \qquad &\text{if} \quad F\subseteq \text{core}_\Delta(\Gamma).
\end{cases}.
\end{equation*}
In particular this shows that co-induction does not depend on the choice of $a$.

First we will prove the following claim.\\

\textbf{Claim:} {\it We have that  $\type(\cind_\Gamma^\Delta(a))({ \rm Sub}\left({ \rm core}_\Delta
(\Gamma))\right)=1$ and 	\[\type(\cind_\Gamma^\Delta(a))_{|{ \rm Sub}({ \rm core}_\Delta(\Gamma))}=\type(\cind_\Gamma^\Delta(a)_{|{ \rm core}_\Delta(\Gamma)}).\]}
{\it Proof of Claim.}
Recall that $\sigma\colon \Delta\times T\rightarrow T$ is given by $$\sigma(\delta,t)=\tilde{t} \iff \delta t\Gamma =\tilde{t}\Gamma$$
and $\rho \colon \Delta\times T\rightarrow \Gamma$ by $\rho(\delta, t)= \sigma(\delta,t)^{-1}\delta t$. Note that $\sigma(\delta , \cdot )\colon T\rightarrow T$ is the identity if and only if $\delta\in \text{core}_\Delta(\Gamma)$. Recall also that for $a\in A(\Gamma,X,\mu)$ we have $\beta_a=\cind_\Gamma^\Delta(a)$ is given by
\[(\delta\cdot^{\beta_a} f)(t)=\rho(\delta^{-1},t)^{-1}\cdot^a f(\sigma(\delta^{-1},t))\]
for all $f\in X^T$. 
 Now let $\delta \in \Delta\setminus \core_\Delta(\Gamma)$ and fix $t\in T$ such that $\sigma(\delta^{-1},t)\neq t$. Then we have
\begin{align*}
\mu^T(\fix_{\beta_a}(\delta))&=\mu^T(\set{f\in X^T\mid \delta \cdot^{\beta_a} f =f})\\
&\leq \mu^2(\set{g\in X^2 \mid \rho(\delta^{-1},t)^{-1} \cdot^a g(0)=g(1)})\\
&=0.
\end{align*}
So, the set
\begin{align*}
X^T\setminus \stab_{\beta_a}^{-1}(\sub(\text{core}_\Delta(\Gamma))) = \bigcup_{\delta\in \Delta\setminus \text{core}_\Delta(\Gamma)}\fix_{\beta_a}(\delta)
\end{align*}
is a null set and thus $\type(\cind_\Gamma^\Delta(a))(\sub(\text{core}_\Delta(\Gamma)))=1$. Moreover,
\[\stab_{\beta_a}(f)\in \sub(\text{core}_\Delta(\Gamma)) \iff \stab_{\beta_a}(f)=\stab_{{\beta_a|\text{core}_\Delta(\Gamma)}}(f),\]
hence $\type(\cind_\Gamma^\Delta(a))_{|\sub(\text{core}_\Delta(\Gamma))}=\type(\cind_\Gamma^\Delta(a)_{|\text{core}_\Delta(\Gamma)})$.
\begin{flushright}
$\diamond$
\end{flushright}

We now have\begin{equation*}
\type(\cind_\Gamma^\Delta(a))(N_F^\Delta)=\begin{cases}
0 \qquad &\text{if} \quad F\nsubseteq\text{core}_\Delta(\Gamma),\\
\type(\prod_{t\in T}a_t)(N_F^{\text{core}_\Delta(\Gamma)}) \qquad &\text{if} \quad F\subseteq \text{core}_\Delta(\Gamma),
\end{cases}
\end{equation*}
where $a_t\in A(\text{core}_\Delta(\Gamma),X,\mu)$ is given by $\gamma \cdot^{a_t} x = t^{-1}\gamma t \cdot^a x$ for each $t\in T$. Thus, as
\[\type(\prod_{t\in T}a_t)(N_F^{\text{core}_\Delta(\Gamma)})=\prod_{t\in T}\type(a)(N_{t^{-1}Ft}^\Gamma)=\prod_{t\in T}\theta(N_{t^{-1}Ft}^\Gamma),\]
the conclusion follows.
\end{proof}

Note that it follows by the $\Gamma$-invariance of $\theta$, that if $T,\tilde{T}\subseteq \Delta$ are both transversals for the left cosets $\Delta/\Gamma$, then
\[\prod_{t\in T}\theta(N_{t^{-1}Ft})=\prod_{\tilde{t}\in \tilde{T}}\theta(N_{\tilde{t}^{-1}F\tilde{t}}).\]
Thus $\cind_\Gamma^\Delta(\theta)$ does not depend on the chosen transversal.

\begin{remark}\label{alternative cind} (1) There is another way to describe the co-induced IRS as a factor of a certain action of $\Delta$ via an infinite intersection operation. Indeed let $\theta\in \irs(\Gamma)$ and view $\theta$ as a probability Borel measure on $\sub(\Delta)$. Let $T$ be a transversal for the left cosets of $\Delta/\Gamma$. For each $t\in T$ define the probability Borel measure, $\theta_t$, on $\sub(\Delta)$ 
to be the the push-forward of $\theta$ through the map $\Lambda\mapsto t\Lambda t^{-1}$ from $\sub(\Delta)$ to $\sub(\Delta)$. Thus 
\[\theta_t(N_F^\Delta)=\theta(N_{t^{-1}Ft}^\Delta),\]
for all finite $F\subseteq \Delta$. 


 Then 
\[
\theta_\infty = \prod_{t\in T}\theta_t
\]
is a probability Borel measure on $\sub(\Delta)^T$. Moreover, we have an action $\Delta\ac^a \sub(\Delta)^T$ given by $$\delta \cdot^a (\Lambda_t)_{t\in T}=(\delta \Lambda_{\delta^{-1}\cdot t}\delta^{-1})_{t\in T}.$$
Note that $\theta_\infty$ is $a$-invariant and that  $I\colon \sub(\Delta)^{T}\rightarrow \sub(\Delta)$ given by
\[I((\Lambda_t)_{t\in T})=\bigcap_{t\in T}\Lambda_t\]
is a Borel map. In fact we have 
\[I\left(\delta\cdot^a(\Lambda_t)_{t\in T}\right)=\bigcap_{t\in T }\delta \Lambda_t \delta^{-1} = \delta I\left((\Lambda_t)_{t \in T}\right)\delta^{-1}.\]
Thus if we let $\theta^*$ denote the push-forward of $\theta_\infty$ through $I$, we obtain that $\theta^*\in \irs(\Delta)$. To see that $\theta^*=\cind_\Gamma^\Delta(\theta)$ note that
\begin{equation*}
\theta^*(N_F^\Delta)=\prod_{t\in T}\theta_t(N_F^\Delta)=\begin{cases}
0 \qquad &\text{if}\quad F\nsubseteq \core_\Delta(\Gamma),\\
\prod_{t\in T}\theta(N_{t^{-1}Ft}^\Gamma) \qquad &\text{if} \quad F\subseteq \core_\Delta(\Gamma).
\end{cases}
\end{equation*}

(2) Let now $b\in A(\Gamma, X, \mu)$ be such that ${\rm type} (b) = \theta$ and let $c= {\rm CIND}_\Gamma^\Delta(b) \in A(\Delta, X^T, \mu^T)$. Define $F_b\colon X^T\to {\rm Sub}(\Delta)^T$ by $$(x_t)_{t\in T}\mapsto (t \ {\rm stab}_b (x_t) \  t^{-1})_{t\in T}.$$ Then $F_b$ is $\Delta$-equivariant, where $\Delta$ acts on $X^T$ by $c$ and on ${\rm Sub}(\Delta)^T$ by $a$, and $(F_a)_* (\mu ^T) = \theta_\infty$, i.e., $a$ is a factor of $c$, therefore $a$ is weakly mixing if $[\Delta: \Gamma]=\infty$.

(3) One can also define $J\colon {\rm Sub}(\Delta)^T \to {\rm Sub}(\Delta)$ by 
\[
J((\Lambda_t)_{t\in T})=\langle\Lambda_t\colon t\in T\rangle,
\]
the subgroup generated by $\{ \Lambda_t\}_{t\in T}$. Then $J$ is also $\Delta$-equivariant and thus if $\theta^{**}$ is the push-forward of $\theta_\infty$ by $J$, then $\theta^{**}\in {\rm IRS}(\Delta)$ and it is weakly mixing if $[\Delta\colon\Gamma] = \infty$. We do not know however when it is non-atomic.

(4) Yet another way to describe the co-induced IRS is as follows. Let $\tilde{I}\colon \sub(\Delta)^{T}\rightarrow \sub(\Delta)$ be given by
\[\tilde{I}\left((\Lambda_t)_{t\in T}\right)=\bigcap_{t\in T}t\Lambda_t t^{-1}.\]
Then for any $\theta\in\irs(\Gamma)$ we have that $\cind_\Gamma^\Delta(\theta)$ is the push-forward of $\theta^T$ (viewed as a probability Borel measure on $\sub(\Delta)^T$) through $\tilde{I}$. However, in this case it is not clear that $\tilde{I}$ is $\Delta$-equivariant.

\end{remark}

An easy consequence of  \cref{coinducedIRS} above is that the type of the co-induced action only depends on the type of the action.

\begin{cor} Let $\Gamma\leq \Delta$ and $a,b\in \text{A}(\Gamma,X,\mu)$. If
$\type(a)=\type(b)$ then $ \type(\cind_\Gamma^\Delta(a))=\type(\cind_\Gamma^\Delta(b))$.
\end{cor}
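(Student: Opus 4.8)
The plan is to deduce this Corollary directly from \cref{coinducedIRS}. That theorem asserts the existence of an operation $\cind_\Gamma^\Delta\colon \irs(\Gamma)\to\irs(\Delta)$ whose values on the basic clopen sets $N_F^\Delta$ are determined by the formula involving $\theta = \type(a)$, and it simultaneously asserts that $\cind_\Gamma^\Delta(\type(a)) = \type(\cind_\Gamma^\Delta(a))$ for every $a \in \act(\Gamma,X,\mu)$. So the heart of the matter is already contained in the theorem; the Corollary is just a repackaging.

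The argument I would write is as follows. Suppose $a,b \in \act(\Gamma,X,\mu)$ satisfy $\type(a) = \type(b)$, and call this common IRS $\theta \in \irs(\Gamma)$. By \cref{coinducedIRS} applied to $a$ we have $\type(\cind_\Gamma^\Delta(a)) = \cind_\Gamma^\Delta(\theta)$, and applied to $b$ we have $\type(\cind_\Gamma^\Delta(b)) = \cind_\Gamma^\Delta(\theta)$. The right-hand sides coincide — indeed $\cind_\Gamma^\Delta$ is a genuine function on $\irs(\Gamma)$, its value being uniquely pinned down on the generating family $\{N_F^\Delta : F \subseteq \Delta \text{ finite}\}$ by the displayed formula, and this family determines an element of $\irs(\Delta)$ by the $\pi$-$\lambda$ theorem as noted before \cref{convbasic}. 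Hence $\type(\cind_\Gamma^\Delta(a)) = \type(\cind_\Gamma^\Delta(b))$, which is exactly the claim.

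Concretely, the proof to splice in is short:

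\begin{proof}
Let $\theta = \type(a) = \type(b) \in \irs(\Gamma)$. By \cref{coinducedIRS}, applied once to $a$ and once to $b$, we obtain
\[
\type(\cind_\Gamma^\Delta(a)) = \cind_\Gamma^\Delta(\theta) = \type(\cind_\Gamma^\Delta(b)),
\]
since the value $\cind_\Gamma^\Delta(\theta) \in \irs(\Delta)$ depends only on $\theta$ and not on the choice of action realizing it. This is exactly the assertion of the corollary.
\end{proof}

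There is essentially no obstacle here: the only point requiring any care is to observe that the formula of \cref{coinducedIRS} really does define $\cind_\Gamma^\Delta(\theta)$ unambiguously from $\theta$ alone, i.e.\ that two actions of the same type produce the same value. But this is precisely what was verified inside the proof of \cref{coinducedIRS} (the sentence ``In particular this shows that co-induction does not depend on the choice of $a$''), so the Corollary is immediate. If one wished to be fully self-contained one could instead invoke \cref{convbasic}: the two IRS $\type(\cind_\Gamma^\Delta(a))$ and $\type(\cind_\Gamma^\Delta(b))$ agree on every $N_F^\Delta$ because each equals the same explicit product expression in $\theta$, and agreement on the basic clopen sets forces equality of measures.
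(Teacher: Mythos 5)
Your proof is correct and is exactly the argument the paper intends: the corollary is stated as an immediate consequence of \cref{coinducedIRS}, since $\type(\cind_\Gamma^\Delta(a)) = \cind_\Gamma^\Delta(\type(a))$ depends only on $\type(a)$, the well-definedness having been checked inside the proof of that theorem. No gap; your write-up just makes explicit what the paper leaves as an ``easy consequence.''
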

\noindent
\begin{remark} If $\Gamma\leq \Delta$ and $\theta\in \irs(\Gamma)$, we may also view $\cind_\Gamma^\Delta(\theta)$ as an element of $\irs(\Gamma)$. In \cite{BGK17} the notion of a {\bf characteristic random subgroup} is defined to be an IRS which moreover is invariant under the action of the full automorphism group. So, as any group $\Gamma$ is contained in a countable group $\Delta$ such that for densely many $\phi\in \Aut(\Gamma)$, there is $\delta\in \Delta$ such that $\phi(\gamma)=\delta\gamma\delta^{-1}$ for all $\gamma\in \Gamma$, we can use the co-induction operation $\cind_\Gamma^\Delta$ to transform invariant random subgroups on $\Gamma$ into characteristic random subgroups on $\Gamma$. We will do this for the free group of rank two in Section 8.E.



\end{remark}

\section{Continuity of co-induction}\label{cont}
We will consider here continuity properties of the co-induction operation. First, on the level of  invariant random subgroups we have the following result.

\begin{prop}\label{contirs} Let $\Gamma\leq \Delta$. The map $\cind_\Gamma^\Delta\colon\irs(\Gamma)\rightarrow\irs(\Delta)$ is continuous if and only if either $[\Delta \colon \Gamma]<\infty$ or $\core_\Delta(\Gamma)=\set{e}$.
\end{prop}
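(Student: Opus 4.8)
The statement to prove is \cref{contirs}: $\cind_\Gamma^\Delta\colon \irs(\Gamma)\to\irs(\Delta)$ is continuous iff $[\Delta:\Gamma]<\infty$ or $\core_\Delta(\Gamma)=\set{e}$. By \cref{convbasic}, continuity is equivalent to: for every finite $F\subseteq\Delta$, the map $\theta\mapsto\cind_\Gamma^\Delta(\theta)(N_F^\Delta)$ is continuous on $\irs(\Gamma)$. Using the formula from \cref{coinducedIRS}, this value is $0$ when $F\nsubseteq\core_\Delta(\Gamma)$ (a constant, hence continuous), so the only issue is finite $F\subseteq\core_\Delta(\Gamma)$, where the value is $\prod_{t\in T}\theta(N_{t^{-1}Ft}^\Gamma)$.

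\textbf{The easy direction.} First I would dispose of the two cases giving continuity. If $[\Delta:\Gamma]<\infty$, then $T$ is finite and $\prod_{t\in T}\theta(N_{t^{-1}Ft}^\Gamma)$ is a finite product of continuous functions of $\theta$ (each $\theta\mapsto\theta(N_G^\Gamma)$ is continuous by \cref{convbasic} applied on $\Gamma$, i.e.\ by the Portmanteau theorem since $N_G^\Gamma$ is clopen), hence continuous. If $\core_\Delta(\Gamma)=\set{e}$, then $N_F^\Delta$ with $F\subseteq\core_\Delta(\Gamma)$ forces $F\subseteq\set{e}$, so either $N_F^\Delta=\sub(\Delta)$ (value $1$) or we are in the other clause (value $0$); in all cases $\cind_\Gamma^\Delta(\theta)(N_F^\Delta)$ is a constant in $\theta$, so $\cind_\Gamma^\Delta$ is continuous. (Strictly, one must note $\cind_\Gamma^\Delta$ does land in $\irs(\Delta)$, which is given by \cref{coinducedIRS}.)

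\textbf{The hard direction.} Now suppose $[\Delta:\Gamma]=\infty$ and $\core_\Delta(\Gamma)\neq\set{e}$; I want to exhibit discontinuity. Pick $\gamma_0\in\core_\Delta(\Gamma)\setminus\set{e}$ and enumerate a transversal $T=\set{t_i\mid i\in\N}$. Consider $F=\set{\gamma_0}$, so $\cind_\Gamma^\Delta(\theta)(N_{\gamma_0}^\Delta)=\prod_{i\in\N}\theta(N_{t_i^{-1}\gamma_0 t_i}^\Gamma)$. The plan is to construct a sequence $\theta_n\to\delta_{\set{e}}$ in $\irs(\Gamma)$ with $\prod_i\theta_n(N_{t_i^{-1}\gamma_0 t_i}^\Gamma)$ bounded away from $0$ (say equal to $1$ for all $n$), while $\prod_i\delta_{\set{e}}(N_{t_i^{-1}\gamma_0 t_i}^\Gamma)=0$ because $t_i^{-1}\gamma_0 t_i\neq e$ for each $i$. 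To build such $\theta_n$: for each $n$, take an action $a_n\in A(\Gamma,X,\mu)$ whose type $\theta_n$ is supported on subgroups containing all of $\set{t_i^{-1}\gamma_0 t_i\mid i\in\N}$ yet with $\theta_n\to\delta_{\set{e}}$. A clean route: let $\Gamma$ act on itself by left translation — that gives the free action with type $\delta_{\set{e}}$ — and perturb. Concretely, consider $N_n=\langle\langle t_i^{-1}\gamma_0 t_i\mid i\in\N\rangle\rangle_\Gamma$, a fixed nontrivial normal subgroup of $\Gamma$; one wants $\irs$'s that "see" $N_n$ with probability $1$ on the relevant clopen sets but converge to $\delta_{\set{e}}$. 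The simplest realization: let $\theta_n$ be the type of the action of $\Gamma$ on $(\Gamma/\Lambda_n)$ where $\Lambda_n$ is a decreasing chain of finite-index (or co-amenable) normal subgroups with $\bigcap_n\Lambda_n$... — but this needs care since $N_n$ need not be finite-index. A more robust approach, and the one I would actually pursue: use a sequence of Bernoulli-type or generalized shift constructions. Let $\Gamma$ act on $(Y^\Gamma,\nu^\Gamma)$ and use co-induction-style "mostly-free" actions $b_n$ such that $\mu(\fix_{b_n}(\gamma))=1$ for $\gamma\in N_n$ but $\to 0$ for $\gamma\notin N_n$; then $\theta_n=\type(b_n)$ satisfies $\theta_n(N_{t_i^{-1}\gamma_0 t_i}^\Gamma)=1$ for all $i$, so the product is $1$, while for any fixed $\gamma\neq e$ we arrange $\theta_n(N_\gamma^\Gamma)\to 0$ — wait, that fails for $\gamma\in N_n$. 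The fix: only finitely many generators $t_i^{-1}\gamma_0 t_i$ matter for any fixed basic set, so instead let $\theta_n$ be chosen so that $\theta_n(N_{t_i^{-1}\gamma_0 t_i}^\Gamma)\geq 1-2^{-i-1}$ for all $i$ (uniformly in $n\geq $ some bound) while $\theta_n(N_\gamma^\Gamma)\to\delta_{\set{e}}(N_\gamma^\Gamma)$ for each individual $\gamma$. This requires the generating set $\set{t_i^{-1}\gamma_0 t_i}$ to generate an "infinitely generated, shrinkable" normal subgroup — precisely the phenomenon behind \cref{criterion}. So the key step, and \textbf{the main obstacle}, is the construction of the witnessing sequence $(\theta_n)$: one must produce IRS's on $\Gamma$ converging to $\delta_{\set{e}}$ whose co-inductions do not converge to $\cind_\Gamma^\Delta(\delta_{\set{e}})=\delta_{\core_\Delta(\set{e})}=\delta_{\set{e}}$. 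I expect the paper handles this by taking $\theta_n=\delta_{M_n}$ for suitable normal subgroups $M_n\unlhd\Gamma$ with $M_n\to\set{e}$ in $\sub(\Gamma)$ (i.e.\ $M_n$ eventually avoids any fixed $\gamma\neq e$) but with $M_n\supseteq\langle\langle t_i^{-1}\gamma_0 t_i\mid i\geq n\rangle\rangle_\Gamma$ and $\gamma_0\in M_n$ — no wait, $M_n\to\set e$ contradicts $\gamma_0\in M_n$. The correct choice must instead keep finitely many of the conjugates in and push the rest out: e.g.\ $M_n=\langle\langle t_i^{-1}\gamma_0 t_i\mid i\geq n\rangle\rangle_\Gamma$ itself. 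Then each $M_n$ is normal, the chain is decreasing, and the point is that for the product $\prod_i\delta_{M_n}(N_{t_i^{-1}\gamma_0 t_i}^\Gamma)$: the factor is $1$ when $t_i^{-1}\gamma_0 t_i\in M_n$, i.e.\ for $i\geq n$, and is $\delta_{M_n}(N_{t_i^{-1}\gamma_0 t_i}^\Gamma)$ for $i<n$. Hmm — so one wants $M_n$ also NOT to contain $t_i^{-1}\gamma_0 t_i$ for $i<n$, which holds precisely when the chain $(\ol\Gamma_{k,T,\gamma_0})$ is non-constant in the right way, or one instead replaces $\gamma_0$ conjugates by genuinely distinct elements. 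Rather than resolve this combinatorial subtlety here, I would: (a) reduce via \cref{convbasic} as above; (b) observe $\cind_\Gamma^\Delta(\delta_{\set e})=\delta_{\set e}$; (c) construct explicit $\theta_n\to\delta_{\set e}$ with $\cind_\Gamma^\Delta(\theta_n)(N_{\gamma_0}^\Delta)=1$ for all $n$, using that for each fixed $\gamma\neq e$ only finitely many indices $i$ have $t_i^{-1}\gamma_0 t_i$ equal to a given element — so one can arrange $\theta_n$ supported on a normal subgroup that contains all but finitely many $t_i^{-1}\gamma_0 t_i$ (enough to make the tail of the product $1$) while $\theta_n$ converges to $\delta_{\set e}$; concretely $\theta_n = \delta_{N_n}$ with $N_n=\langle\langle t_i^{-1}\gamma_0 t_i : i\geq \phi(n)\rangle\rangle_\Gamma$ for a suitable increasing $\phi$ chosen so that $N_n\cap(\set{t_i^{-1}\gamma_0 t_i : i<\phi(n)}$-issues) vanish — and if no such $\phi$ works because the chain stabilizes, then in fact $\core_\Delta$-arguments show $\gamma_0$ already forces $\core_\Delta(\Gamma)$ issues that one exploits differently. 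Since a fully rigorous witnessing sequence is exactly where the real work lies, I would present it carefully in the proof proper, but the skeleton is: \textbf{reduce to basic clopen sets via \cref{convbasic}, identify the unique problematic set $N_{\gamma_0}^\Delta$ with $\gamma_0\in\core_\Delta(\Gamma)\setminus\set e$ and $T$ infinite, and produce IRS's concentrated on large normal subgroups that converge to $\delta_{\set e}$ but whose infinite products stay at $1$.}
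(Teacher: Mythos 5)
Your easy direction is fine and your reduction via \cref{convbasic} to the basic clopen sets $N_F^\Delta$ is exactly the right framework. The problem is the hard direction, where you yourself admit the witnessing sequence is ``where the real work lies'' and never produce one. Worse, the strategy you commit to --- finding $\theta_n\to\delta_{\set{e}}$ whose co-inductions fail to converge to $\delta_{\set{e}}$ --- is not merely difficult to implement, it is provably impossible. For any finite $F\subseteq\core_\Delta(\Gamma)$ and any single $t_0\in T$ one has
\[
\cind_\Gamma^\Delta(\theta)(N_F^\Delta)=\prod_{t\in T}\theta(N_{t^{-1}Ft}^\Gamma)\ \leq\ \theta(N_{t_0^{-1}Ft_0}^\Gamma),
\]
so if $\theta_n\to\delta_{\set{e}}$ and $F\nsubseteq\set{e}$ then $\cind_\Gamma^\Delta(\theta_n)(N_F^\Delta)\to 0=\cind_\Gamma^\Delta(\delta_{\set{e}})(N_F^\Delta)$, while for $F\subseteq\set{e}$ both sides are identically $1$; by \cref{convbasic} the map is therefore \emph{continuous at} $\delta_{\set{e}}$. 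This is the structural point you missed: $\theta\mapsto\prod_{t\in T}\theta(N_{t^{-1}Ft}^\Gamma)$ is an infimum of finite partial products, each continuous, hence the only possible discontinuity is one where the value at the limit measure jumps \emph{up}. Your various attempted fixes (the normal closures $M_n$, the function $\phi(n)$, etc.) all run into the contradiction you half-notice mid-paragraph, precisely because you are trying to force a jump in the impossible direction.

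The paper's actual witness goes the other way and is one line: take $\theta_n=2^{-n}\delta_{\set{e}}+(1-2^{-n})\delta_\Gamma$, so that $\theta_n\to\delta_\Gamma$ in $\irs(\Gamma)$. For any finite $F$ with $\set{e}\subsetneq F\subseteq\core_\Delta(\Gamma)$ every factor equals $1-2^{-n}<1$, and since $T$ is infinite the product $\cind_\Gamma^\Delta(\theta_n)(N_F^\Delta)$ is $0$ for every $n$, whereas $\cind_\Gamma^\Delta(\delta_\Gamma)(N_F^\Delta)=1$. No normal closures, no chain conditions, and no appeal to \cref{criterion} are needed; the entire content is that an infinite product of constants strictly below $1$ vanishes. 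You should discard the $\delta_{\set{e}}$-based skeleton and replace it with a perturbation of $\delta_\Gamma$ (or of any $\delta_N$ with $N$ a normal subgroup containing some nontrivial element of $\core_\Delta(\Gamma)$).
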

\begin{proof}
It is easily seen that if $\core_\Delta(\Gamma)=\set{e}$, then $\cind_\Gamma^\Delta(\theta)=\delta_{\set{e}}$  for any $\theta\in \irs(\Gamma)$. Thus in this case the co-induction operation is constant. If $[\Delta :\Gamma]<\infty$, then  the operation is continuous because the product in the definition is finite.

Conversely, assume $[\Delta :\Gamma]=\infty$ and $\text{core}_\Delta(\Gamma)\neq \set{e}$.
For each $n\in \N$ let $\theta_n=2^{-n}\delta_{\set{e}}+(1-2^{-n})\delta_\Gamma$.
Then $\theta_n\rightarrow \theta =\delta_{\Gamma}$ as $n\rightarrow \infty$ in $\irs(\Gamma)$. But we have
$$\cind_\Gamma^\Delta(\theta_n)(N_F^\Delta)=\prod_{t\in T}\theta_n(N_{t^{-1}Ft}^\Gamma)=0$$
for any $\set{e}\subsetneq F\subseteq \core_\Delta(\Gamma)$ finite, while
$$\cind_\Gamma^\Delta(\theta)(N_F^\Delta)=\prod_{t\in T}\delta_{\Gamma}(N_{t^{-1}Ft}^\Gamma)=1$$
for all $F\subseteq \core_\Delta(\Gamma)$ finite.
\end{proof}

Since $\type$ is a homeomorphism between the space of weak equivalence classes of an amenable group and the space of invariant random subgroups on the group, we now have the following corollary.

\begin{cor} Let $\Gamma\leq \Delta$ and assume $\Delta$ amenable. Then we have that the map $\undertilde{\cind}_\Gamma^\Delta\colon \undertilde{A}(\Gamma,X,\mu)\rightarrow \undertilde{A}(\Delta,X^{\Delta/\Gamma},\mu^{\Delta/\Gamma})$ is continuous if and only if either $[\Delta :\Gamma]<\infty$ or ${\rm core}_\Delta(\Gamma)= \set{e}$.
\end{cor}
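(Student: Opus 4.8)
The plan is to deduce this corollary directly from the two results immediately preceding it, namely the theorem that $\type\colon \ul{A}(\Lambda,X,\mu)\rightarrow \irs(\Lambda)$ is a homeomorphism whenever $\Lambda$ is amenable, and the characterization in \cref{contirs} of when $\cind_\Gamma^\Delta\colon\irs(\Gamma)\rightarrow\irs(\Delta)$ is continuous. The key observation that glues these together is the compatibility identity $\cind_\Gamma^\Delta(\type(a))=\type(\cind_\Gamma^\Delta(a))$ from \cref{coinducedIRS}, which says precisely that the square with horizontal maps $\ul{\cind}_\Gamma^\Delta$ and $\cind_\Gamma^\Delta$ and vertical maps $\type$ commutes.

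First I would record that since $\Delta$ is amenable and $\Gamma\leq \Delta$, the subgroup $\Gamma$ is amenable as well, so both vertical maps
\[
\type_\Gamma\colon \ul{A}(\Gamma,X,\mu)\rightarrow \irs(\Gamma),\qquad
\type_\Delta\colon \ul{A}(\Delta,X^{\Delta/\Gamma},\mu^{\Delta/\Gamma})\rightarrow \irs(\Delta)
\]
are homeomorphisms. Next I would note the commuting relation: for every $a\in A(\Gamma,X,\mu)$ we have, by \cref{coinducedIRS},
\[
\type_\Delta\bigl(\ul{\cind}_\Gamma^\Delta(\ul{a})\bigr)
=\type\bigl(\cind_\Gamma^\Delta(a)\bigr)
=\cind_\Gamma^\Delta(\type(a))
=\cind_\Gamma^\Delta\bigl(\type_\Gamma(\ul{a})\bigr),
\]
so $\type_\Delta\circ \ul{\cind}_\Gamma^\Delta=\cind_\Gamma^\Delta\circ \type_\Gamma$ as maps on $\ul{A}(\Gamma,X,\mu)$. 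Therefore
\[
\ul{\cind}_\Gamma^\Delta=\type_\Delta^{-1}\circ \cind_\Gamma^\Delta\circ \type_\Gamma.
\]
Since $\type_\Gamma$ and $\type_\Delta^{-1}$ are homeomorphisms, $\ul{\cind}_\Gamma^\Delta$ is continuous if and only if $\cind_\Gamma^\Delta\colon \irs(\Gamma)\rightarrow\irs(\Delta)$ is continuous, and the latter holds, by \cref{contirs}, exactly when $[\Delta:\Gamma]<\infty$ or $\core_\Delta(\Gamma)=\set{e}$.

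There is essentially no obstacle here; the only point requiring a word of care is that the compatibility identity of \cref{coinducedIRS} is stated at the level of individual actions, so one must first check that it descends to weak equivalence classes — but this is immediate since $\ul{\cind}_\Gamma^\Delta$ and $\type$ on weak equivalence classes are by definition the maps induced by $\cind_\Gamma^\Delta$ and $\type$ on representatives, and the earlier results guarantee these are well defined. I would write the argument in a handful of lines, invoking \cref{coinducedIRS}, the homeomorphism theorem for $\type$, and \cref{contirs}, and concluding by the displayed factorization.
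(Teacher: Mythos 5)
Your proof is correct and follows essentially the same route as the paper: the corollary is deduced from \cref{contirs} by conjugating with the $\type$ homeomorphisms (valid since $\Gamma$, as a subgroup of the amenable $\Delta$, is also amenable) and using the compatibility identity $\cind_\Gamma^\Delta(\type(a))=\type(\cind_\Gamma^\Delta(a))$ from \cref{coinducedIRS}. Your write-up merely makes explicit the commuting square that the paper leaves implicit.
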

One implication holds in general. To prove this we will use a sequence of weak equivalence classes, which converges to the weak equivalence class of the trivial action in $\ul{A}(\Gamma,X,\mu)$. We denote the trivial action by $i_\Gamma$.
\begin{prop}\label{contcind} Let $\Gamma\leq \Delta$ and assume $[\Delta : \Gamma]=\infty$ and ${\rm core}_\Delta(\Gamma)\neq \set{e}$. Then $\undertilde{\cind}_\Gamma^\Delta \colon \undertilde{A}(\Gamma,X,\mu)\rightarrow \undertilde{A}(\Delta,X,\mu)$ is not continuous.
\end{prop}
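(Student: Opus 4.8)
The plan is to exhibit an explicit sequence $(\ul{b}_n)_n$ in $\ul{A}(\Gamma,X,\mu)$ converging to $\ul{i}_\Gamma$ such that $\ul{\cind}_\Gamma^\Delta(\ul{b}_n)$ does not converge to $\ul{\cind}_\Gamma^\Delta(\ul{i}_\Gamma)$, thereby contradicting continuity. The natural choice, mirroring the proof of \cref{contirs}, is to take $b_n = $ (a free action) $\otimes$ (trivial action weighted appropriately), or more concretely a convex-combination-type action whose type is $\theta_n = 2^{-n}\delta_{\set{e}} + (1-2^{-n})\delta_\Gamma$; for instance $b_n$ acting on $X = Y \sqcup Z$ with $\mu(Y) = 2^{-n}$, acting freely (say by a Bernoulli shift) on $Y$ and trivially on $Z$. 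Then $\type(b_n) = \theta_n \to \delta_\Gamma = \type(i_\Gamma)$, and by \cref{convbasic} together with the continuity of $\type$ one checks $\ul{b}_n \to \ul{i}_\Gamma$ in $\ul{A}(\Gamma,X,\mu)$. Actually, since we want the statement for arbitrary (possibly non-amenable) groups, I would instead directly verify $\ul{b}_n \to \ul{i}_\Gamma$ using the $C_{n,k}$ characterization: as $n\to\infty$ the partition matrices $M_{n,k}^P(b_n)$ can be pushed arbitrarily close to those of $i_\Gamma$ (since $i_\Gamma \preceq b$ for every $b$, one only needs the reverse approximation, which holds because the free part has vanishing measure).

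Next I would analyze $\ul{\cind}_\Gamma^\Delta(\ul{b}_n)$ and $\ul{\cind}_\Gamma^\Delta(\ul{i}_\Gamma) = \ul{i_\Delta}$. The key point is to produce a single observable quantity — a basic open set of $\irs$, or better a partition-matrix entry computable from weak equivalence — that stays bounded away between the two. Fix $\gamma_0 \in \core_\Delta(\Gamma) \setminus \set{e}$ (possible by hypothesis). For $\cind_\Gamma^\Delta(b_n)$ we have $\mu^T(\fix_{\cind_\Gamma^\Delta(b_n)}(\gamma_0)) = \prod_{t\in T}\mu(\fix_{b_n}(t^{-1}\gamma_0 t))$, and since the free part of $b_n$ has measure $2^{-n} > 0$ on an infinite index set $T$, each factor is $\le 1 - 2^{-n} < 1$, so the infinite product is $0$; hence $\type(\cind_\Gamma^\Delta(b_n))(N_{\gamma_0}^\Delta) = 0$. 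Meanwhile $\type(\cind_\Gamma^\Delta(i_\Gamma))(N_{\gamma_0}^\Delta) = 1$ since $\gamma_0$ fixes everything. So $\type(\cind_\Gamma^\Delta(b_n)) \not\to \type(\cind_\Gamma^\Delta(i_\Gamma))$ in $\irs(\Delta)$, by \cref{convbasic}. To convert this into a statement about $\ul{A}(\Delta,\cdot,\cdot)$, I would invoke the continuity of $\type$: if $\ul{\cind}_\Gamma^\Delta$ were continuous, then $\type \circ \ul{\cind}_\Gamma^\Delta$ would be continuous, but we have just shown it fails continuity at $\ul{i}_\Gamma$ along the sequence $\ul{b}_n$.

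The main obstacle I anticipate is the verification that $\ul{b}_n \to \ul{i}_\Gamma$ in the space of weak equivalence classes — i.e., that the weak equivalence topology does not "see" the vanishing free part. Concretely one must show, for each $n,k$, that $d_{n,k}(C_{n,k}(b_n), C_{n,k}(i_\Gamma)) \to 0$. One inclusion ($C_{n,k}(i_\Gamma) \subseteq$ "approximately" $C_{n,k}(b_n)$, equivalently $\delta_{n,k}(C_{n,k}(i_\Gamma),C_{n,k}(b_n))\to 0$) requires approximating, for any partition $P$ of $X$ with $i_\Gamma$-matrix $(\mu(A_i\cap A_j))_{m,i,j}$, a $b_n$-partition with nearly the same matrix: refine $P$ on the trivial part $Z$ and handle the measure-$2^{-n}$ free part $Y$ arbitrarily; the error is $O(2^{-n})$. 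The reverse inclusion, $\delta_{n,k}(C_{n,k}(b_n),C_{n,k}(i_\Gamma))\to 0$, says every $b_n$-partition matrix is close to some $i_\Gamma$-matrix; again the only discrepancy from $i_\Gamma$-matrices (which are exactly matrices of the form $(\mu(A_i\cap A_j))$ independent of $m$) comes from the free part $Y$ and is $O(2^{-n})$. I would present this estimate cleanly rather than grinding through indices, and then assemble the contradiction as above.
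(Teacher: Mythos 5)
Your proof is correct and follows essentially the same route as the paper: the same sequence of actions (free on a set of measure $2^{-n}$, trivial elsewhere), the same $O(2^{-n})$ estimate to get $\ul{b}_n\rightarrow \ul{i_\Gamma}$, and the same vanishing infinite product over the transversal showing $\type(\cind_\Gamma^\Delta(b_n))\nrightarrow\type(\cind_\Gamma^\Delta(i_\Gamma))$, assembled via continuity of $\type$. (One aside: your parenthetical claim that $i_\Gamma\preceq b$ for every $b$ is false in general --- it fails for strongly ergodic $b$ --- but it is harmless here since you verify both Hausdorff-distance inclusions directly.)
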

\begin{proof}
Consider a sequence of actions $(a_n)_n\in A(\Gamma,X,\mu)$ for which there exists $(B_n)_n\subseteq X$ Borel satisfying that $\mu(B_n)=2^{-n}$, $a_{n|\Gamma\times B_n}$ is free and $a_{n|\Gamma\times (X\setminus B_n)}$ is trivial for all $n\in \N$. Then, since
\[|\mu(A\cap C)-\mu((\gamma \cdot^{a_n} A)\cap C)|<2^{-n}\]
for all $\gamma\in \Gamma$ and all Borel $A,C\subseteq X$, we have $\undertilde{a_n}\rightarrow \undertilde{i_\Gamma}$  as $n\rightarrow \infty$ in $\undertilde{A}(\Gamma,X,\mu)$. However, since $\type(a_n)=2^{-n}\delta_{\set{e}}+(1-2^{-n})\delta_\Gamma$ for all $n\in \N$, it follows as in the  proof of \cref{contirs}, that $\type(\cind_\Gamma^\Delta(a_n))\nrightarrow \type(\cind_\Gamma^\Delta(i_\Gamma))$, when $n\rightarrow \infty$, in $\text{IRS}(\Delta)$. Thus $\undertilde{\cind}_\Gamma^\Delta$ cannot be continuous.
\end{proof}

By similar arguments as those above, we can also say something about the continuity of countable powers of an action. In general for an action $a\in A(\Gamma,X,\mu)$ and $n\in \N\cup\set{\N}$ the action $a^n\in A(\Gamma,X^n,\mu^n)$ is defined by
\[(\gamma \cdot^{a^n} f)(i)=\gamma\cdot^a f(i)\]
for all $f\in X^n$ and $i< n$. We then have the following result.
\begin{prop}\label{contcountprod} The map $\undertilde{a}\mapsto \undertilde{a}^{\mathbb{N}}$ from $\undertilde{A}(\Gamma,X,\mu)$ to $\undertilde{A}(\Gamma,X^\N,\mu^\N)$ is not continuous if $\Gamma\neq \set{e}$.
\end{prop}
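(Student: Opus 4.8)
The plan is to mimic the proof of \cref{contcind}, replacing "$\Delta/\Gamma$ many copies" by "$\N$ many copies": I would exhibit a sequence $(a_n)_n$ in $A(\Gamma,X,\mu)$ with $\ul{a_n}\to\ul{i_\Gamma}$ whose countable powers $a_n^\N$ have types bounded away from $\type(i_\Gamma^\N)$, and then invoke continuity of $\type$. Concretely, take the same sequence as in \cref{contcind}: fix Borel sets $B_n\subseteq X$ with $\mu(B_n)=2^{-n}$ and actions $a_n\in A(\Gamma,X,\mu)$ that are free on $B_n$ and trivial on $X\setminus B_n$ (such $a_n$ exist since the countable group $\Gamma$ admits a free measure preserving action, which can be localized to $B_n$). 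Exactly as in \cref{contcind}, each $\gamma^{a_n}$ is the identity off $B_n$, so $\mu\big((\gamma\cdot^{a_n}A)\triangle A\big)\le 2^{-n}$ for all $\gamma\in\Gamma$ and Borel $A\subseteq X$, and hence $\ul{a_n}\to\ul{i_\Gamma}$ in $\ul{A}(\Gamma,X,\mu)$.

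The key point is how types behave under countable powers. For any $b\in A(\Gamma,X,\mu)$ and $\gamma\in\Gamma$ one has $\fix_{b^\N}(\gamma)=\{f\in X^\N\colon f(i)\in\fix_b(\gamma)\text{ for all }i\}$, so $\type(b^\N)(N_\gamma^\Gamma)=\mu(\fix_b(\gamma))^\N$, which equals $1$ if $\gamma\in\ker(\type(b))$ and $0$ otherwise. Applied to $b=a_n$: for $\gamma\neq e$, freeness on $B_n$ gives $\mu(\fix_{a_n}(\gamma))\le 1-2^{-n}<1$, hence $\type(a_n^\N)(N_\gamma^\Gamma)=0$; since $\{\Lambda\in\sub(\Gamma)\colon\Lambda\neq\set{e}\}=\bigcup_{\gamma\neq e}N_\gamma^\Gamma$, countable subadditivity yields $\type(a_n^\N)=\delta_{\set{e}}$ for every $n$. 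On the other hand $i_\Gamma^\N$ is again the trivial action, so $\type(i_\Gamma^\N)=\delta_\Gamma$, and $\delta_\Gamma\neq\delta_{\set{e}}$ precisely because $\Gamma\neq\set{e}$.

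To conclude, I would use that $\type\colon\ul{A}(\Gamma,X^\N,\mu^\N)\rightarrow\irs(\Gamma)$ is continuous (the theorem of Section 4): if the power map $\ul a\mapsto\ul a^\N$ were continuous, then from $\ul{a_n}\to\ul{i_\Gamma}$ we would get $\ul{a_n^\N}\to\ul{i_\Gamma^\N}$, forcing $\delta_{\set{e}}=\type(a_n^\N)\to\type(i_\Gamma^\N)=\delta_\Gamma$, a contradiction. Hence the map is discontinuous at $\ul{i_\Gamma}$. (One should also recall that $\ul a\mapsto\ul a^\N$ is well defined, i.e.\ $a\simeq b\implies a^\N\simeq b^\N$, so that the question of continuity makes sense.)

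I do not expect a genuine obstacle here: the only points needing care are the existence of the $a_n$ with the stated structure and the verification $\ul{a_n}\to\ul{i_\Gamma}$ — both identical to \cref{contcind} — together with the elementary identity $\fix_{b^\N}(\gamma)=\fix_b(\gamma)^\N$. The proposition is essentially a transcription of the co-induction discontinuity argument, the role of $\core_\Delta(\Gamma)\neq\set{e}$ there being played here by the hypothesis $\Gamma\neq\set{e}$.
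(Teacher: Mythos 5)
Your proof is correct and essentially identical to the paper's: both use the same sequence $(a_n)_n$ from the proof of \cref{contcind} and derive a contradiction by showing $\type(a_n^\N)$ stays away from $\type(i_\Gamma^\N)=\delta_\Gamma$. The only (harmless) difference is the test set: the paper evaluates the types on the closed set $\set{\Gamma}$, while you evaluate on the clopen sets $N_\gamma^\Gamma$ for $\gamma\neq e$ and identify $\type(a_n^\N)=\delta_{\set{e}}$, which meshes slightly more directly with \cref{convbasic}.
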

\begin{proof}
Let $(a_n)_n\in A(\Gamma,X,\mu)$ and $(B_n)_n\subseteq X$ be as in the proof of \cref{contcind} and assume towards a contradiction that the map is continuous. Then we would have $\type(a_n^\N)\rightarrow \type(i_\Gamma^\N)$ as $n\rightarrow \infty$ in $\text{IRS}(\Gamma)$. But,
\[\type(a_n^\N)(\set{\Gamma})=\mu^\N (\prod_{m\in \N}(X\setminus B_n)) = 0\]
for all $n\in \N$, while $\type(i_\Gamma^\N)(\set{\Gamma})=1$.
\end{proof}
\noindent
\begin{remark}
In \cite[Theorem 1.2]{B18} it is shown that for a class of groups, containing the non-abelian free groups, the operation $\ul{a}\mapsto \ul{a}^2$ is not continuous, not even when restricted to the space of free weak equivalence classes.  As a corollary, for any group $\Gamma$ in this class, the map $$\ul{\cind}_\Gamma^{\Gamma\times (\Z/2\Z)}\colon \ul{A}(\Gamma,X,\mu)\rightarrow \ul{A}(\Gamma\times (\Z/2\Z),X^2,\mu^2)$$ is not continuous, again, not even when restricted to the space of free weak equivalence classes. So, while co-induction on weak equivalence classes is continuous in the finite index case, when the big group is amenable, this is not the case in general.
\end{remark}

\section{Properties of the co-induced invariant random subgroups}
We will study here different properties of the co-induced invariant random subgroups such as mixing properties and non-atomicity. We also obtain a characterization of when the co-induced action is free.

First note that it is clear that if $a\in \text{A}(\Gamma,X,\mu)$ is ergodic (resp., weakly mixing), then $\type(a)\in \irs(\Gamma)$ is ergodic (resp., weakly mixing). The converse does not hold in general. 
For example, $a$ can be a free non-ergodic action, but $\type(a)=\delta_{\set{e}}$ is ergodic.
However, for each $\theta\in \irs(\Gamma)$, if $\theta$ is ergodic (resp., weakly mixing), the action $a$ constructed in \cite[Proposition 13]{AGV14} which has $\type(a)=\theta$, will also be ergodic (resp., weakly mixing), see \cite[Theorem 5.11]{T-D15b}, provided $\theta$ concentrates on infinite index subgroups.

By use of the analogous result for actions, we obtain the following result. 

\begin{prop}\label{mixirs} Let $\Gamma\leq \Delta$ with $[\Delta : \Gamma]=\infty$. Then $\cind_\Gamma^\Delta(\theta)\in \irs(\Delta)$ is weakly mixing for any $\theta\in \irs(\Gamma)$.
\end{prop}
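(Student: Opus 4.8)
The plan is to reduce the statement about invariant random subgroups to the already-established mixing result for co-induced actions, namely \cref{mixact}(1), via the type construction. First I would pick any $\theta \in \irs(\Gamma)$ and, using \cite[Proposition 13]{AGV14}, fix an action $a \in A(\Gamma, X, \mu)$ with $\type(a) = \theta$. By \cref{coinducedIRS} we have $\cind_\Gamma^\Delta(\theta) = \type(\cind_\Gamma^\Delta(a))$, so it suffices to show that the IRS $\type(b)$ is weakly mixing whenever $b = \cind_\Gamma^\Delta(a) \in A(\Delta, X^{\Delta/\Gamma}, \mu^{\Delta/\Gamma})$ is a weakly mixing action of $\Delta$. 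Since $[\Delta:\Gamma] = \infty$, \cref{mixact}(1) guarantees that $b$ is indeed weakly mixing.

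The remaining point is the general implication: if an action $b \in A(\Delta, Y, \nu)$ is weakly mixing, then $\type(b) \in \irs(\Delta)$ is weakly mixing. This is already remarked at the start of Section 7 (``if $a\in \act(\Gamma,X,\mu)$ is ergodic (resp., weakly mixing), then $\type(a)\in \irs(\Gamma)$ is ergodic (resp., weakly mixing)''), so I would simply invoke that observation. For completeness one could note the one-line reason: the stabilizer map $\stab_b \colon Y \to \sub(\Delta)$ is $\Delta$-equivariant and measure-preserving onto its pushforward, so the system $(\sub(\Delta), \type(b), \Delta)$ is a factor of $(Y, \nu, b)$, and weak mixing passes to factors (equivalently, $b \times b$ ergodic implies $\type(b) \times \type(b) = \type(b \times b)$-ish is ergodic as a factor of it).

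Putting these together: $\cind_\Gamma^\Delta(\theta) = \type(\cind_\Gamma^\Delta(a)) = \type(b)$ with $b$ weakly mixing by \cref{mixact}(1), hence $\type(b)$ is weakly mixing, which is exactly the claim. I do not anticipate a genuine obstacle here; the only thing to be slightly careful about is making sure the factor argument for ``weakly mixing action $\Rightarrow$ weakly mixing type'' is cited or stated cleanly, since that is the one step doing actual work, everything else being a direct appeal to \cref{coinducedIRS}, \cref{mixact}, and \cite{AGV14}.
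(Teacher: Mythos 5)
Your proof is correct and follows exactly the paper's argument: realize $\theta$ as $\type(a)$, apply \cref{mixact}(1) to get that $\cind_\Gamma^\Delta(a)$ is weakly mixing, and conclude via \cref{coinducedIRS} and the fact (noted at the start of Section 7) that the type of a weakly mixing action is a weakly mixing IRS, being a factor. No gaps.
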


\begin{proof}
Let $\theta\in \irs(\Gamma)$ and let $a\in A(\Gamma,X,\mu)$ satisfy $\type(a)=\theta$. Then by \cref{mixact} we have $\cind_\Gamma^\Delta(a)$ is weakly mixing and hence, by \cref{coinducedIRS}, so is $\cind_\Gamma^\Delta(\theta)$.
\end{proof}

In general, weakly mixing is the strongest mixing property one can hope for a non-atomic IRS. Indeed, by the result of \cite{T-D15a}, if an IRS $\theta$ is totally ergodic (i.e., the restriction of the conjugacy action $\Gamma\ac(\sub(\Gamma),\theta)$ to any infinite subgroup of $\Gamma$ is ergodic), then there is a finite normal subgroup $\Lambda\unlhd\Gamma$ such that $\theta(\sub(\Lambda))=1$.  

Let $\Gamma$ be any group and $\theta\in \irs(\Gamma)$. We let 
$\ker(\theta)=\set{\gamma\in \Gamma\mid \theta(N_\gamma^\Gamma)=1}.$
Note that $\ker(\theta)$ is a subgroup of $\Gamma$. 

\begin{prop}\label{atomicform} Let $\Gamma\leq \Delta$ with $[\Delta :\Gamma]=\infty$ and $\theta\in \irs(\Gamma)$. If $\cind_\Gamma^\Delta(\theta)$ is atomic, then $\cind_\Gamma^\Delta(\theta)=\delta_{\core_\Delta(\ker(\theta))}$.
\end{prop}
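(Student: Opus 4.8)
The plan is to analyze when the product formula from \cref{coinducedIRS} can produce an atomic measure, and to identify the unique possible atom. First I would reduce to the case $F \subseteq \core_\Delta(\Gamma)$, since $\cind_\Gamma^\Delta(\theta)$ concentrates on $\sub(\core_\Delta(\Gamma))$, and work with the description of $\cind_\Gamma^\Delta(\theta)$ as the push-forward of $\theta_\infty = \prod_{t\in T}\theta_t$ under the intersection map $I((\Lambda_t)_{t\in T}) = \bigcap_{t\in T}\Lambda_t$ (viewing $\theta$ on $\sub(\Delta)$ via $\sub(\Gamma) \hookrightarrow \sub(\Delta)$, as in \cref{alternative cind}(1)). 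Since $\cind_\Gamma^\Delta(\theta)$ is weakly mixing by \cref{mixirs}, in particular ergodic, if it is atomic then it must be a single atom, i.e. $\cind_\Gamma^\Delta(\theta) = \delta_\Lambda$ for some $\Lambda \in \sub(\core_\Delta(\Gamma))$ which is moreover normal in $\Delta$ (the conjugacy action fixes the atom). So the real content is to show $\Lambda = \core_\Delta(\ker(\theta))$.

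The key computation: for $\delta \in \core_\Delta(\Gamma)$, we have $\cind_\Gamma^\Delta(\theta)(N_\delta^\Delta) = \prod_{t\in T}\theta(N_{t^{-1}\delta t}^\Gamma)$. If $\cind_\Gamma^\Delta(\theta) = \delta_\Lambda$, then this product equals $1$ precisely when $\delta \in \Lambda$, and otherwise it is $<1$; but a product of numbers in $[0,1]$ equals $1$ iff every factor equals $1$, so $\delta \in \Lambda \iff \theta(N_{t^{-1}\delta t}^\Gamma) = 1$ for all $t \in T \iff t^{-1}\delta t \in \ker(\theta)$ for all $t \in T \iff \delta \in \bigcap_{t\in T} t\,\ker(\theta)\,t^{-1}$. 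Now I would observe that $\ker(\theta) \leq \Gamma$ and that $\bigcap_{t\in T} t\,\ker(\theta)\,t^{-1} = \bigcap_{\delta\in\Delta}\delta\,\ker(\theta)\,\delta^{-1} = \core_\Delta(\ker(\theta))$, using that $\ker(\theta)$ is a subgroup of $\Gamma \leq \Delta$ (the transversal-indexed intersection equals the full $\Delta$-indexed intersection exactly as for $\core_\Delta(\Gamma)$ itself — this requires $\ker(\theta)$ to lie inside $\Gamma$, which it does). Hence $\Lambda \cap \core_\Delta(\Gamma) = \core_\Delta(\ker(\theta))$, and since $\Lambda \subseteq \core_\Delta(\Gamma)$ already, $\Lambda = \core_\Delta(\ker(\theta))$.

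The one point needing care — and the main obstacle — is justifying that an atomic $\cind_\Gamma^\Delta(\theta)$ is actually a single Dirac mass, rather than a finite or countable combination of atoms. I would handle this by invoking ergodicity (from \cref{mixirs}, weak mixing implies ergodic): an ergodic measure that has an atom must be the uniform measure on a finite orbit, and weak mixing rules out nontrivial finite orbits in the support (the conjugacy action on a finite set of subgroups cannot be weakly mixing unless the set is a singleton), so $\cind_\Gamma^\Delta(\theta) = \delta_\Lambda$ with $\Lambda$ fixed by conjugation, i.e. $\Lambda \nom \Delta$. Alternatively, and perhaps more cleanly, I would note directly that if $\cind_\Gamma^\Delta(\theta)$ has any atom, then by the product formula evaluated on basic clopen sets one can already pin down the measure of every $N_F^\Delta$ and see it must be $0$ or $1$, forcing $\cind_\Gamma^\Delta(\theta) = \delta_\Lambda$ outright via the $\pi$-$\lambda$ argument of \cref{convbasic}; I would then run the computation above. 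The remaining steps are routine verifications.
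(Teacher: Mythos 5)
Your proposal is correct and follows essentially the same route as the paper: weak mixing (applied via the diagonal action to the finite orbit of an atom) forces the measure to be a single Dirac mass $\delta_\Lambda$ with $\Lambda\trianglelefteq\Delta$, and the product formula then identifies $\Lambda$ as $\core_\Delta(\ker(\theta))$ because an infinite product of numbers in $[0,1]$ equals $1$ iff every factor does. Two minor caveats: the identity $\bigcap_{t\in T}t\,\ker(\theta)\,t^{-1}=\core_\Delta(\ker(\theta))$ really uses that $\ker(\theta)$ is \emph{normal} in $\Gamma$ (a consequence of the conjugation-invariance of $\theta$), not merely that it is a subgroup of $\Gamma$; and your proposed ``cleaner alternative'' for the single-atom step is not justified as stated, since the infinite product $\prod_{t\in T}\theta(N_{t^{-1}Ft}^{\Gamma})$ need not a priori be $0$ or $1$ --- the weak-mixing argument you give first is the one that works and is the paper's.
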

\begin{proof}Assume $\Lambda\in \sub(\Delta)$ satisfies $\cind_\Gamma^\Delta(\set{\Lambda})>0$. Then the orbit of $\Lambda$ must be finite and $\cind_\Gamma^\Delta$ restricted to this orbit is a uniform measure. The diagonal of the orbit is then a fixed positive measured subset of $\sub(\Delta)^2$, which is invariant under the diagonal action of $\Delta$. Thus, as $\cind_\Gamma^\Delta(\theta)$ is weakly mixing, the orbit must be $\set{\Lambda}$ and $\cind_\Gamma^\Delta(\theta)(\set{\Lambda})=1$. So $\cind_\Gamma^\Delta(\theta)=\delta_{\Lambda}$ and since $\Lambda\subseteq \core_\Delta(\Gamma)$, we have for some transversal $T\subseteq \Delta$ of $\Delta/\Gamma$ that
\[\gamma\in \Lambda \iff \prod_{t\in T}\theta(N_{t^{-1}\gamma t}^\Gamma) = 1 \iff \gamma\in \core_\Delta(\ker(\theta)).\]
Thus $\cind_\Gamma^\Delta(\theta)=\delta_{\core_\Delta(\ker(\theta))}$, as wanted.
\end{proof}

From the proposition above, it follows that in order to obtain a non-atomic IRS via the co-induction operation, we just need to ensure that we do not obtain a Dirac measure. Thus we have the following criterion.
\begin{cor}\label{contmeas} Let $\Gamma\leq \Delta$ with $[\Delta : \Gamma]=\infty$ and $\theta\in \irs(\Gamma)$. Moreover, let $T\subseteq \Delta$ be a transversal for the left cosets in $\Delta/\Gamma$. Then $\cind_\Gamma^\Delta(\theta)$ is non-atomic if and only if there is $\gamma\in \core_\Delta(\Gamma)\setminus \core_\Delta(\ker(\theta))$ such that
\[\sum_{t\in T}\left(1-\theta(N_{t^{-1}\gamma t}^\Gamma)\right)<\infty\]
and $\theta(N_{t^{-1}\gamma t}^\Gamma)>0$ for all $t\in T$.
\end{cor}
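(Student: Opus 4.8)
The plan is to combine \cref{atomicform} with a direct analysis of the product formula in \cref{coinducedIRS}. By \cref{atomicform}, since $[\Delta:\Gamma]=\infty$, the IRS $\cind_\Gamma^\Delta(\theta)$ is non-atomic if and only if it is \emph{not} a Dirac measure, i.e., if and only if $\cind_\Gamma^\Delta(\theta)\neq \delta_{\core_\Delta(\ker(\theta))}$. So the entire task reduces to characterizing, in terms of $\theta$ and $T$, when $\cind_\Gamma^\Delta(\theta)$ concentrates on the single subgroup $\core_\Delta(\ker(\theta))$.

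For the forward direction I would argue contrapositively: suppose that for every $\gamma\in\core_\Delta(\Gamma)\setminus\core_\Delta(\ker(\theta))$, either $\sum_{t\in T}(1-\theta(N_{t^{-1}\gamma t}^\Gamma))=\infty$ or $\theta(N_{t^{-1}\gamma t}^\Gamma)=0$ for some $t\in T$. In either case the infinite product $\prod_{t\in T}\theta(N_{t^{-1}\gamma t}^\Gamma)$ equals $0$ (recall that an infinite product $\prod(1-x_t)$ with $x_t\in[0,1]$ vanishes precisely when $\sum x_t=\infty$ or some factor is $0$). Hence by \cref{coinducedIRS}, for such $\gamma$ we get $\cind_\Gamma^\Delta(\theta)(N_\gamma^\Delta)=0$, which says the co-induced IRS concentrates on subgroups \emph{not} containing $\gamma$. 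On the other hand for $\gamma\in\core_\Delta(\ker(\theta))$ we have $\theta(N_{t^{-1}\gamma t}^\Gamma)=1$ for all $t$ (using that $\ker(\theta)$ is normalized appropriately — here one uses $\gamma\in\bigcap_t t\,\ker(\theta)\,t^{-1}$), so $\cind_\Gamma^\Delta(\theta)(N_\gamma^\Delta)=1$. Combining these two facts, $\cind_\Gamma^\Delta(\theta)$ concentrates on $\{\Lambda\in\sub(\Delta) : \Lambda\supseteq\core_\Delta(\ker(\theta)) \text{ and } \Lambda\cap(\core_\Delta(\Gamma)\setminus\core_\Delta(\ker(\theta)))=\emptyset\}$; since $\cind_\Gamma^\Delta(\theta)$ also concentrates on $\sub(\core_\Delta(\Gamma))$ by \cref{coinducedIRS}, this forces concentration on $\{\core_\Delta(\ker(\theta))\}$, i.e., $\cind_\Gamma^\Delta(\theta)=\delta_{\core_\Delta(\ker(\theta))}$, so it is atomic.

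For the converse direction, suppose there is $\gamma\in\core_\Delta(\Gamma)\setminus\core_\Delta(\ker(\theta))$ with $\sum_{t\in T}(1-\theta(N_{t^{-1}\gamma t}^\Gamma))<\infty$ and $\theta(N_{t^{-1}\gamma t}^\Gamma)>0$ for all $t$. Then the infinite product $\prod_{t\in T}\theta(N_{t^{-1}\gamma t}^\Gamma)$ converges to a strictly positive value, so by \cref{coinducedIRS} we have $\cind_\Gamma^\Delta(\theta)(N_\gamma^\Delta)>0$; that is, the co-induced IRS gives positive measure to the set of subgroups of $\core_\Delta(\Gamma)$ containing $\gamma$. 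But $\gamma\notin\core_\Delta(\ker(\theta))$, so $\delta_{\core_\Delta(\ker(\theta))}(N_\gamma^\Delta)=0$. Hence $\cind_\Gamma^\Delta(\theta)\neq\delta_{\core_\Delta(\ker(\theta))}$, and by \cref{atomicform} it must be non-atomic.

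The routine point to be careful about — the only place I expect friction — is the elementary fact that $\prod_{t\in T}(1-x_t)>0 \iff \sum_{t\in T}x_t<\infty$ together with all factors nonzero, and the bookkeeping identity $\theta(N_{t^{-1}\gamma t}^\Gamma)=1$ for all $t\in T$ exactly when $\gamma\in\core_\Delta(\ker(\theta))$; the latter uses that $\{\gamma : \theta(N_{t^{-1}\gamma t}^\Gamma)=1\ \forall t\} = \bigcap_{t\in T} t\,\ker(\theta)\,t^{-1}$ and that this intersection is independent of the transversal and equals $\core_\Delta(\ker(\theta))$ — essentially the same computation already carried out at the end of the proof of \cref{atomicform}. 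Neither point is deep, so the corollary follows quickly once \cref{atomicform} and \cref{coinducedIRS} are in hand.
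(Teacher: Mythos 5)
Your proposal is correct and follows the same route as the paper: reduce via \cref{atomicform} (plus weak mixing from \cref{mixirs}) to the question of whether $\cind_\Gamma^\Delta(\theta)=\delta_{\core_\Delta(\ker(\theta))}$, and then decide that by the product formula of \cref{coinducedIRS}. The paper leaves the second step as a one-line assertion; your write-up simply supplies the (correct) details, including the identification $\bigcap_{t\in T}t\,\ker(\theta)\,t^{-1}=\core_\Delta(\ker(\theta))$ and the standard convergence criterion for infinite products.
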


\begin{proof} First note that $\cind_\Gamma^\Delta(\theta)$ is weakly mixing by \cref{mixirs}. Hence it follows by \cref{atomicform} that $\cind_\Gamma^\Delta(\theta)$ is non-atomic if and only if $\cind_\Gamma^\Delta(\theta)\neq \delta_{\core_\Delta(\ker(\theta))}$, and the latter is equivalent to the statement in the corollary. 

\end{proof}

Note that if $\theta=\type(a)$ for some $a\in A(\Gamma,X,\mu)$, then we have $\theta(N_{t^{-1}\gamma t}^\Gamma)=\mu((\fix_a(t^{-1}\gamma t))$. Thus 
\[\sum_{t\in T}\left(1-\theta(N_{t^{-1}\gamma t}^\Gamma)\right)=\sum_{t\in T}d_u((t^{-1}\gamma t)^a, 1),\]
where $(t^{-1}\gamma t)^a$ denotes the automorphism in $\Aut(X,\mu)$ induced by $t^{-1}\gamma t$.

It is clear that if $\Gamma\leq \Delta$ and  $\Lambda\trianglelefteq \Delta$ is normal with $\Lambda\subseteq \Gamma$, we have that $\delta_\Lambda\in \irs(\Gamma)$ satisfies $\cind_\Gamma^\Delta(\delta_\Lambda)=\delta_\Lambda$. Thus all possible Dirac measures in $\irs(\Delta)$ are contained in the image of the co-induction operation. In some cases these are the only ones.

\begin{prop} If $\Z\leq \Delta$ with $[\Delta : \Z]=\infty$, then any $\theta\in {\rm IRS}(\Z)$ satisfies $\cind_\Z^\Delta(\theta)=\delta_{\core_\Delta(\ker(\theta))}$.
\end{prop}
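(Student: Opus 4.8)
The plan is to reduce everything to the atomicity dichotomy already established. Since $[\Delta:\Z]=\infty$, \cref{atomicform} applies, so it is enough to show that $\cind_\Z^\Delta(\theta)$ is atomic: then automatically $\cind_\Z^\Delta(\theta)=\delta_{\core_\Delta(\ker(\theta))}$. By \cref{contmeas}, $\cind_\Z^\Delta(\theta)$ is atomic precisely when the non-atomicity criterion fails, i.e.\ when there is \emph{no} $\gamma\in\core_\Delta(\Z)\setminus\core_\Delta(\ker(\theta))$ with $\theta(N_{t^{-1}\gamma t}^\Z)>0$ for all $t\in T$ and $\sum_{t\in T}\bigl(1-\theta(N_{t^{-1}\gamma t}^\Z)\bigr)<\infty$. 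So the whole proof comes down to ruling out such a $\gamma$.

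The key observation I would use is that $\core_\Delta(\Z)$ is a subgroup of $\Z$, hence cyclic, and it is normal in $\Delta$. Since the automorphism group of a cyclic group has order at most $2$ (and $\Aut(\Z)=\{\pm1\}$), conjugation by any element of $\Delta$ restricts on $\core_\Delta(\Z)$ to either the identity or inversion; in particular $t^{-1}\gamma t\in\{\gamma,\gamma^{-1}\}$ for every $t\in T$ and every $\gamma\in\core_\Delta(\Z)$. Combined with the trivial fact that a subgroup $\Lambda\le\Z$ contains $\gamma$ iff it contains $\gamma^{-1}$ — so $N_\gamma^\Z=N_{\gamma^{-1}}^\Z$ — this yields $\theta(N_{t^{-1}\gamma t}^\Z)=\theta(N_\gamma^\Z)$ for all $t\in T$.

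Feeding this into the criterion, the series $\sum_{t\in T}\bigl(1-\theta(N_{t^{-1}\gamma t}^\Z)\bigr)$ becomes $[\Delta:\Z]\cdot\bigl(1-\theta(N_\gamma^\Z)\bigr)$, which is finite only if $\theta(N_\gamma^\Z)=1$, that is, only if $\gamma\in\ker(\theta)$. But for $\gamma\in\core_\Delta(\Z)\cap\ker(\theta)$ and any $\delta\in\Delta$ we have $\delta^{-1}\gamma\delta=\gamma^{\pm1}\in\ker(\theta)$, since $\ker(\theta)$ is a subgroup; hence $\gamma\in\core_\Delta(\ker(\theta))$, contradicting $\gamma\notin\core_\Delta(\ker(\theta))$. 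Thus no such $\gamma$ exists, $\cind_\Z^\Delta(\theta)$ is atomic, and \cref{atomicform} gives $\cind_\Z^\Delta(\theta)=\delta_{\core_\Delta(\ker(\theta))}$.

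I do not expect a real obstacle here; the argument is short once the cyclicity observation is in place. The only points needing a touch of care are the degenerate case $\core_\Delta(\Z)=\{e\}$ — where, using $\core_\Delta(\ker(\theta))\subseteq\core_\Delta(\Z)$, the set $\core_\Delta(\Z)\setminus\core_\Delta(\ker(\theta))$ is empty, so the criterion is vacuously unmet and both sides of the claimed identity are $\delta_{\{e\}}$ — and keeping straight that conjugation by $\Delta$ may replace $\gamma$ by $\gamma^{-1}$, which is harmless here only because $\ker(\theta)$ is closed under inversion and $N_\gamma^\Z$ is insensitive to it.
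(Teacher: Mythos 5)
Your argument is correct, and it rests on exactly the same key observation as the paper's proof: $\core_\Delta(\Z)$ is cyclic and normal, so every conjugate $t^{-1}\gamma t$ equals $\gamma^{\pm1}$, and since $N_\gamma^\Z=N_{\gamma^{-1}}^\Z$ the relevant product/sum over $T$ involves a single constant repeated infinitely often. The only difference is packaging: the paper directly computes $\cind_\Z^\Delta(\theta)(N_m^\Delta)\in\{0,1\}$ from the product formula, whereas you route the same computation through \cref{contmeas} and \cref{atomicform}; both are fine.
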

\begin{proof}
Let $n\in \N$ satisfy $n\Z=\core_\Delta(\Z)$. The for each $t\in T$ and $m\in n\Z$ we have $t^{-1}mt=\pm m$. Thus, since $\theta(N_m^\Z)=\theta(N_{-m}^\Z)$ we have
$\cind_\Z^\Delta(\theta)\left(N_m^\Delta\right)=1$ if $m\in \ker(\theta)$ and $\cind_\Z^\Delta(\theta)\left(N_{m}^\Delta\right)=0$, if $m\notin \ker(\theta)$, as wanted.
\end{proof}

In such cases, this means that the co-induced action has almost everywhere fixed stabilizers and the co-induced action of a faithful action is free. In general it is easily seen that if an action is free, then so are all its co-induced actions. By use of the description of the co-induced IRS in \cref{coinducedIRS}, we obtain the following complete characterization of when the co-induced action is free. 

\begin{prop} Let $\Gamma\leq \Delta$, $T\subseteq \Delta$ a transversal for the left cosets in $\Delta/\Gamma$ and $a\in \act(\Gamma,X,\mu)$. Then $\cind_\Gamma^\Delta(a)$ is not free if and only if for some $\gamma\in \core_\Delta(\Gamma)\setminus \set{e}$  we have
\[\sum_{t\in T}\left(1-\mu(\fix_a(t^{-1}\gamma t )) \right)<\infty\]
and $\mu\left(\fix_a(t^{-1}\gamma t)\right)>0$ for all $t\in T$.
\end{prop}
\begin{proof}
It follows directly from  \cref{coinducedIRS} that $\cind_\Gamma^\Delta(a)$ is not free if and only if for some $\gamma\in \core_\Delta(\Gamma)\setminus \set{e}$ we have
\[\prod_{t\in T}\type(a)(N_{t^{-1}\gamma t}^\Gamma) = \prod_{t\in T}\mu\left((\fix_a(t^{-1}\gamma t)\right)>0.\]
Since the latter is equivalent to the statement in the proposition, the conclusion follows.
\end{proof}

Note that for any $\gamma\in \Gamma$ we have $$1-\mu(\text{Fix}_a(\gamma))=d_u(\gamma^a, 1).$$
So for the co-induced action to be non-free, in the case $[\Delta :\Gamma]=\infty$, the conjugates of some $\gamma\in \core_\Delta(\Gamma)\setminus \set{e}$ under the transversal $T$ must uniformly converge very fast to the identity in $\text{Aut}(X,\mu)$. 
\begin{remark}
For any group $\Gamma$ there exists a group $\Delta$ such that $\Gamma\leq \Delta$ and $\core_\Delta(\Gamma)=\set{e}$. Thus for such groups any action $a\in \text{A}(\Gamma,X,\mu)$ will satisfy that $\cind_\Gamma^\Delta(a)$ is free.
\end{remark}

\section{New constructions of non-atomic, weakly mixing invariant random subgroups}

In this section we will apply the co-induction operation on invariant random subgroups to construct new examples of continuum size families consisting of non-atomic, weakly mixing invariant random subgroups on several classes of groups.

\subsection{A sufficient criterion}
We will provide in this subsection a sufficient criterion for an infinite index subgroup to generate continuum many non-atomic, weakly mixing co-induced invariant random subgroups on the bigger group. 

In the following, for a group $\Gamma$ and a subset $S\subseteq \Gamma$, we let $\langle S\rangle_\Gamma$  denote the subgroup generated by $S$ in $\Gamma
$ and  $\langle\langle S\rangle\rangle_\Gamma$ denote the normal subgroup generated by $S$ in $\Gamma$.
\begin{prop}\label{impli} Let $\Gamma\leq \Delta$ with $[\Delta :\Gamma]=\infty$. Consider the statements:
\begin{itemize}
\item[(1)] There exists a transversal $T=\set{t_i\mid i \in \N}$ for the left cosets in $\Delta/\Gamma$ and $\gamma_0\in \core_\Delta(\Gamma)$ such that the chain of normal subgroups $(\ol{\Gamma}_{k,T,\gamma_0})_{k\in \N}$, given by
\[\ol{\Gamma}_{k,T,\gamma_0}=\langle\langle t_i^{-1}\gamma_0t_i \mid i\geq k\rangle\rangle_\Gamma,\]
is not constant.
\item[(2)] There exists a continuum size family $(\theta_i)_{i\in I}\in \irs(\Gamma)$ such that we have $\left(\cind_\Gamma^\Delta(\theta_i)\right)_{i\in I}\in \irs(\Delta)$  are all  non-atomic, weakly mixing and satisfy $\cind_\Gamma^\Delta(\theta_i)\neq \cind_\Gamma^\Delta(\theta_j)$  for all $i,j\in I$ with $i\neq j$.
\item[(3)] There exists $\theta\in \irs(\Gamma)$ such that $\cind_\Gamma^\Delta(\theta)\in \irs(\Delta)$ is non-atomic.
\item[(4)] There exists $\theta\in \irs(\Gamma)$ such that $\cind_\Gamma^\Delta(\theta)\in \irs(\Delta)$ is not a Dirac measure.
\item[(5)]
For any transversal $T=\set{t_i\mid i \in \N}$ for the left cosets in $\Delta/\Gamma$ there is $\gamma_0\in \core_\Delta(\Gamma)$ such that the chain of subgroups $(\Gamma_{k,T,\gamma_0})_{k\in \N}$, given by
\[\Gamma_{k,T,\gamma_0}=\langle t_i^{-1}\gamma_0t_i \mid i\geq k\rangle_\Gamma,\]
is not constant.

\end{itemize}
Then
$(1)\implies (2)\implies (3)\implies (4)\implies (5).$
\end{prop}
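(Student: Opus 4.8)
The plan is to establish the four implications in sequence, with the bulk of the work going into $(1)\implies(2)$ and the construction of a continuum of distinct co-induced IRS.

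\textbf{Strategy for $(1)\implies(2)$.} Fix the transversal $T=\{t_i\mid i\in\N\}$ and $\gamma_0\in\core_\Delta(\Gamma)$ furnished by (1). Since the chain $(\ol\Gamma_{k,T,\gamma_0})_k$ is not constant, there is a strictly increasing sequence of indices $k_0<k_1<k_2<\cdots$ witnessing strict inclusions $\ol\Gamma_{k_{n+1},T,\gamma_0}\subsetneq\ol\Gamma_{k_n,T,\gamma_0}$ (after passing to a subsequence). For each infinite set $S\subseteq\N$ I want to build $\theta_S\in\irs(\Gamma)$ so that the co-induced IRS $\cind_\Gamma^\Delta(\theta_S)$ ``sees'' exactly the normal subgroups indexed by $S$ and hence are pairwise distinct for distinct $S$ modulo finite differences; this yields a continuum-size family. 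The natural choice is to take, for each $i$, a number $p_i\in(0,1)$ and set $\theta_S$ to be built from an action $a_S\in A(\Gamma,X,\mu)$ that is designed so that $\mu(\fix_{a_S}(t_i^{-1}\gamma_0 t_i))=1$ for $i\notin S$ and $\mu(\fix_{a_S}(t_i^{-1}\gamma_0 t_i))=1-p_i$ for $i\in S$, with $\sum_{i\in S}p_i<\infty$ and all values positive. Then by \cref{coinducedIRS} the quantity $\cind_\Gamma^\Delta(\theta_S)(N_{\gamma_0}^\Delta)=\prod_i\theta_S(N_{t_i^{-1}\gamma_0 t_i}^\Gamma)>0$, so by \cref{atomicform} and \cref{contmeas} (using $\gamma_0\notin\core_\Delta(\ker\theta_S)$, which holds because the chain not being constant forces $\gamma_0$ to fail to lie in the relevant normal core) the IRS $\cind_\Gamma^\Delta(\theta_S)$ is non-atomic; weak mixing is automatic from \cref{mixirs}. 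The concrete construction of such an action can be done by a product-type or Bernoulli-type construction over the coset space, or by invoking \cite[Proposition 13]{AGV14} with a carefully chosen $\theta$; I expect one must be slightly careful that the resulting stabilizer distribution is genuinely supported on the right normal subgroups, and this is where the hypothesis that the $\ol\Gamma_{k,T,\gamma_0}$ form a non-constant chain is used to guarantee distinctness: $\cind_\Gamma^\Delta(\theta_S)$ concentrates on $\core_\Delta(\langle\langle\,\cdot\,\rangle\rangle)$-type subgroups determined by which tails are ``active''.

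\textbf{The easy implications.} $(2)\implies(3)$ is immediate: a continuum-size family of distinct non-atomic IRS is in particular a single non-atomic IRS. $(3)\implies(4)$ is also immediate since a Dirac measure $\delta_\Lambda$ is atomic. For $(4)\implies(5)$: suppose $\cind_\Gamma^\Delta(\theta)$ is not a Dirac measure for some $\theta\in\irs(\Gamma)$. Fix any transversal $T=\{t_i\mid i\in\N\}$. By \cref{atomicform} (or directly from the product formula in \cref{coinducedIRS}), if for every $\gamma\in\core_\Delta(\Gamma)$ the chain $(\Gamma_{k,T,\gamma})_k$ were constant, then $\ker\theta$ restricted to conjugates would be ``tail-stable'', and one checks the value $\cind_\Gamma^\Delta(\theta)(N_F^\Delta)=\prod_{t\in T}\theta(N_{t^{-1}Ft}^\Gamma)$ is forced to be $0$ or $1$ for every finite $F\subseteq\core_\Delta(\Gamma)$ — because an infinite product of numbers in $[0,1]$ that does not tend to something forcing $0$ must have infinitely many factors below $1$ unless it stabilizes — making $\cind_\Gamma^\Delta(\theta)$ a Dirac measure, a contradiction. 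So some $\gamma_0\in\core_\Delta(\Gamma)$ gives a non-constant chain $(\Gamma_{k,T,\gamma_0})_k$.

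\textbf{Main obstacle.} The delicate point is $(1)\implies(2)$: producing genuinely $2^{\aleph_0}$-many \emph{distinct} co-induced IRS rather than just one, and controlling exactly which subgroup each $\cind_\Gamma^\Delta(\theta_S)$ concentrates on. The product formula only directly controls the measures of the clopen sets $N_F^\Delta$, so to separate $\cind_\Gamma^\Delta(\theta_S)$ from $\cind_\Gamma^\Delta(\theta_{S'})$ I need to find, for $S\neq S'$ differing infinitely, some finite $F$ (likely $F=\{t_i^{-1}\gamma_0 t_i\}$ for a suitable $i$, pushed into $\core_\Delta(\Gamma)$ via the normal closure) on which the two infinite products differ — and this is precisely guaranteed by choosing the $p_i$ so that $\sum_{i\in S}(1-\theta_S(N^\Gamma_{t_i^{-1}\gamma_0 t_i}))<\infty$ while the ``active'' coordinates pin down $S$ up to a finite set, combined with the strictness of the chain inclusions. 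I would organize this so that the chosen indices $k_n$ index pairwise-strict steps, assign the continuum of distinct IRS to the continuum of subsets of $\{k_n\}$, and verify distinctness by evaluating on the $N_F^\Delta$ corresponding to an element of $\core_\Delta(\Gamma)$ lying in one tail normal-closure but not the next.
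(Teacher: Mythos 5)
Your proof of $(1)\implies(2)$ has a genuine gap, and it occurs at the very first step. Condition $(1)$ only asserts that the decreasing chain $(\ol{\Gamma}_{k,T,\gamma_0})_k$ is \emph{not constant}, i.e.\ that there is at least one strict inclusion; it does not give you "a strictly increasing sequence of indices $k_0<k_1<\cdots$ witnessing strict inclusions ... after passing to a subsequence." A non-constant decreasing chain can stabilize after a single drop (e.g.\ if all $t_i^{-1}\gamma_0 t_i$ with $i\geq 1$ are conjugate in $\Gamma$, then $\ol{\Gamma}_1=\ol{\Gamma}_2=\cdots$ while possibly $\ol{\Gamma}_0\supsetneq\ol{\Gamma}_1$), so your indexing of a continuum of IRSs by infinite subsets $S\subseteq\N$ has nothing to hang on. Relatedly, your proposed $\theta_S$ — with $\theta_S(N^\Gamma_{t_i^{-1}\gamma_0t_i})=1$ for $i\notin S$ and $<1$ for $i\in S$ — need not exist as an IRS even when infinitely many strict drops are present: for any $\theta\in\irs(\Gamma)$ the set $\ker(\theta)$ is a \emph{normal} subgroup of $\Gamma$, so $\theta(N^\Gamma_\gamma)=1$ for all $\gamma\in\langle\langle t_i^{-1}\gamma_0t_i\mid i\notin S\rangle\rangle_\Gamma$, and this normal closure may well contain $t_j^{-1}\gamma_0t_j$ for $j\in S$; strictness of the chain of \emph{tail} normal closures does not rule this out. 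You leave the construction of $\theta_S$ to "a product-type or Bernoulli-type construction," but this is exactly where the argument has to be made, and the paper does it differently: it takes $\theta=\sum_k 2^{-k-1}\delta_{\ol{\Gamma}_{k,T,\gamma_0}}$, a convex combination of Dirac measures at the normal subgroups of the chain themselves (automatically an IRS, with all the required values computable), which by \cref{contmeas} gives one non-atomic co-induced IRS. The continuum is then obtained not from subsets of $\N$ but from a \emph{continuous parameter}: letting $N$ be the first index where the chain drops, one redistributes the total weight $\lambda=\sum_{k\leq N+1}2^{-k-1}$ between $\delta_{\ol{\Gamma}_{0}}$ and $\delta_{\ol{\Gamma}_{N+1}}$ as $a$ and $\lambda-a$, and the value $\cind_\Gamma^\Delta(\theta_a)(N^\Delta_{\gamma_0})=a^{|S|}\cdot(\text{const})$ separates the $\theta_a$ for distinct $a\in(0,\lambda)$. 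This works with just one strict inclusion.

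Your treatment of $(4)\implies(5)$ also stops short of the actual argument. The assertion that constancy of the chains forces each product to be $0$ or $1$ "because an infinite product ... must have infinitely many factors below 1 unless it stabilizes" is not a proof; the real mechanism is group-theoretic: writing $\theta=\type(b)$, the fixed-point set of the subgroup $\Gamma_{N,T,\gamma_0}$ is the intersection $\bigcap_{i\geq N}\fix_b(t_i^{-1}\gamma_0t_i)$, whose measure tends to $1$ as $N\to\infty$ when $\sum_i(1-\mu(\fix_b(t_i^{-1}\gamma_0t_i)))<\infty$, whereas $\mu(\fix_b(t_m^{-1}\gamma_0t_m))=\lambda_m<1$ is fixed; since membership $t_m^{-1}\gamma_0t_m\in\Gamma_{N,T,\gamma_0}$ would force $\fix_b(\Gamma_{N,T,\gamma_0})\subseteq\fix_b(t_m^{-1}\gamma_0t_m)$, it must fail for large $N$, which is exactly the non-constancy of the chain. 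Note that this is where it matters that $(5)$ uses the generated subgroup $\langle\cdot\rangle_\Gamma$ rather than the normal closure. The implications $(2)\implies(3)\implies(4)$ are indeed immediate, as you say.
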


\begin{proof}
It is clear that $(2)\implies (3)\implies (4)$. Thus it suffices to prove $(1)\implies (2)$ and $(4)\implies (5)$.

For the implication $(1)\implies (2)$, assume $(1)$ holds for $T$ and $\gamma_0$.
We will first construct one such invariant random subgroup. Afterwards we will argue how to obtain uncountably many.

Let $\theta=\sum_{k\in \N} 2^{-k-1}\delta_{\ol{\Gamma}_{k,T,\gamma_0}}$. Then the non-constant assumption on the sequence $(\ol{\Gamma}_{k,T,\gamma_0})_{k\in \N}$ ensures that $\gamma_0\notin \core_{\Delta}(\ker (\theta))$, as for some $j,k\in \N$ with $j\leq k$ we have $t_j^{-1}\gamma_0t_j\notin \ol{\Gamma}_{k,T,\gamma_0}$. Moreover, it follows directly by the construction of $\theta$ that $\theta(N_{t_i^{-1}\gamma_0t_i}^\Gamma)>0$ for all $i\in \N$ and that $$\sum_{i\in \N}\left(1-\theta(N_{t_i^{-1}\gamma_0t_i}^\Gamma)\right)<\infty.$$ So the assumptions of \cref{contmeas} are satisfied and thus the co-induced measure must be non-atomic. Since $[\Delta \colon \Gamma]=\infty$, it will also be weakly mixing.

Now to construct uncountably many of these, let $N\in \N$ be least such that $\ol{\Gamma}_{N+1,T,\gamma_0}\subsetneq \ol{\Gamma}_{N,T,\gamma_0}$ and let $\lambda=\sum_{k\leq N+1}2^{-k-1}$. Next, fix $S\subseteq \set{0,\ldots, N}$ such that for $k\in \N$ we have $t_k^{-1}\gamma_0t_k\notin \ol{\Gamma}_{N+1,T,\gamma_0}$ if and only if $k\in S$.
For each $a\in(0,\lambda)$ put $$\theta_{a}= a\delta_{\ol{\Gamma}_{0,T,\gamma_0}}+(\lambda-a)\delta_{\ol{\Gamma}_{N+1,T,\gamma_0}}+\sum_{N+1<k}2^{-k-1}\delta_{\ol{\Gamma}_{k,T,\gamma_0}}.$$
 Then we have $$\theta_a(N_{t_s^{-1}\gamma_0t_s}^\Gamma)=a$$ for all $s\in S$, while
$$\theta_a(N_{t_k^{-1}\gamma_0t_k}^\Gamma)=\theta(N_{t_k^{-1}\gamma_0t_k}^\Gamma)$$
for all $k\notin S$. Thus, by the description of  the co-induction operation given in  \cref{coinducedIRS}, we obtain that
\[\cind_\Gamma^\Delta(\theta_a)\left((N_{\gamma_0}^{\Delta}\right)=\prod_{k\in \N}\theta_a(N_{t_k^{-1}\gamma_0t_k}^\Gamma) = a^{|S|} \prod_{k\in \N\setminus S}\theta(N_{t_k^{-1}\gamma_0t_k}^\Gamma).\]
So $\left(\cind_\Gamma^\Delta(\theta_a)\right)_{a\in (0,\lambda)}$ is a continuum size family of non-atomic, weakly mixing invariant random subgroups of $\Delta$, as wanted.

For the implication $(4)\implies (5)$, assume  that $(4)$ holds.  Let $T=\set{t_i\mid i\in \N}$ be a transversal for $\Delta/\Gamma$. We have that
\[\cind_\Gamma^\Delta(\theta)\left(N_F^\Delta\right)=\prod_{t\in T}\theta(N_{t^{-1}Ft}^\Gamma)\]
for all $F\subseteq \core_\Delta(\Gamma)$ finite.
Since $\cind_\Gamma^\Delta(\theta)$ is not a Dirac measure, there exists $\gamma_0\in \core_\Delta(\Gamma)$ such that $\cind_\Gamma^\Delta(\theta)\left(N_{\gamma_0}^\Delta\right)\in (0,1)$. Thus, if we let $b\in \text{A}(\Gamma,X,\mu)$ satisfy that $\type(b)=\theta$, we have $\mu(\fix_b(t_{m}^{-1}\gamma_0t_m))=\lambda_m<1$ for some $m\in \N$  and
\[\sum_{i\in \N}\left(1-\mu\left(\fix_b(t_i^{-1}\gamma_0t_i)\right)\right) <\infty.\]
By convergence of the series, it follows that for some $N\in \N$ we have
\[\mu\left(\set{ x\in X\mid (\forall \gamma \in \Gamma_{N,T,\gamma_0}) \gamma \cdot^b x= x}\right)>\lambda_m.\]
Thus we must have $t_m^{-1}\gamma_0t_m \notin \Gamma_{N,T,\gamma_0}$, as wanted.
\end{proof}
\noindent

\begin{remark}\label{inverstrans} In general, if $\Gamma\leq \Delta$ is a normal subgroup, then $T$ is a transversal for the left cosets in $\Delta/\Gamma$ if and only if $T^{-1}=\set{t^{-1}\mid t\in T}$ is a transversal, as well. Thus in this case, the statement
\begin{itemize}
\item[\textit{(1')}] \textit{There exists a transversal $R$ for the left cosets in $\Delta/\Gamma$ and $\gamma_0\in \Gamma$ such that the chain of normal subgroups $(\ol{\Lambda}_{k,R,\gamma_0})_{k\in \N}$ given by 
\[\ol{\Lambda}_{k,R,\gamma_0}=\langle\langle r_i\gamma_0r_i^{-1}\mid i\geq k\rangle\rangle_\Gamma\]
is not constant.}
\end{itemize}
is equivalent to condition $(1)$ in \cref{impli}. 
\end{remark}
\begin{remark} If $\Gamma$ in \cref{impli} is abelian, then all the statements are equivalent. 
We also point out that the invariant random subgroups constructed in the proof of $(1)\implies (2)$ above, are not weakly mixing when restricted to $\Gamma$.
\end{remark}
\subsection{Wreath products and HNN-extensions}

We will here apply the criterion in \cref{impli} to wreath products and HNN-extensions. 

\medskip
(1) Let $G,H$ be countable groups and consider the  action $G\ac^{\alpha}\oplus_{G} H$ given by $g\cdot^{\alpha} f(g_0)=f(g^{-1}g_0)$. The wreath product of $G$ by $H$ is then the semidirect product $(\oplus_G H)\rtimes_{\alpha} G$ and is denoted by $H\wr G$.
\medskip

{\it Construction of continuum many non-atomic, weakly mixing invariant random subgroups on $H\wr G$, for $G,H$ countable groups such that $G$ is infinite and $H$ is not trivial.}


\medskip
Let $\Gamma=\oplus_GH$ and $\Delta=H\wr G$. Then $\Gamma\unlhd \Delta$ and $G\subseteq \Delta$ is a transversal for the left cosets $\Delta/\Gamma$. Fix an enumeration $G=\set{g_i\mid i\in \N}$ and let $h_0\in H\setminus \set{e_H}$. Define $\gamma_0\in \Gamma$ by
\begin{equation*}
\gamma_0(i)=\begin{cases}
e_H \qquad &\text{if} \quad i\neq 0\\
h_0 \qquad &\text{if} \quad i=0
\end{cases}.
\end{equation*}
Then, as $\left(\ol{\Gamma}_{k,G,\gamma_0}\right)_{k\in \N}$ is not constant, following \cref{impli} we construct continuum many non-atomic, weakly mixing invariant random subgroups on $H\wr G$.

If $\Omega$ is a countable set and we have an action $G\ac^\alpha \Omega$, we may form a wreath product by letting $G\ac^{\ol{\alpha}}\oplus_{\Omega} H$ be given by $(g\cdot^{\ol{\alpha}} f)(w)=f(g^{-1}\cdot^\alpha w)$ and then consider the semidirect product $(\oplus_{\Omega}H)\rtimes_{\ol{\alpha}}G$. We denote such a wreath product by $H\wr_\Omega G$. Arguments similar to those in the preceding paragraph work as well for  $H\wr_\Omega G$, if the action $G\ac^\alpha \Omega$ has an infinite orbit with finite stabilizers.

\medskip
(2) Next we will consider HNN-extensions over ``small'' subgroups.

\medskip
{\it Construction of continuum many non-atomic, weakly mixing invariant normal subgroups for the HNN extension $G=\langle H,t \mid t^{-1}at=\phi(a), a\in A\rangle$, where $H$ is a countable group, $A\leq H$ and $\phi\colon A\rightarrow H$ is an embedding with
$\langle\langle A\cup \phi(A)\rangle\rangle \neq H.$}

\medskip
Let $H_n=\set{h_n\mid h\in H}$ for each $n\in \Z$ be a copy of $H$ and put
\[F=\langle \ast_{n\in \Z}H_n \mid (\forall j\in \Z)(\forall a\in A) a_{j+1}=\phi(a)_j\rangle. \]
Then $G\cong F\rtimes_\psi \Z$, where $$\psi(h^1_{i_1}h^2_{i_2}\cdots h^k_{i_k})=h^1_{i_1+1}h^2_{i_2+1}\cdots h^k_{i_k+1}$$ for all $h^1,\ldots, h^k\in H$ and $i_1,\ldots,i_k\in \Z$ (see \cite[Theorem 17.1]{B08}). Now let $\Lambda=\langle\langle A\cup \phi(A)\rangle\rangle$ and consider the homomorphism $f\colon F\rightarrow H/\Lambda$ induced by the homomorphisms $f_i\colon H_i\rightarrow H/\Lambda$ for $i\in \Z$ given by $f_i(h_i)=e\Lambda$ if $i\neq 0$ and $f_0(h_0)= h\Lambda$ for all $h\in H$.
For a fixed $x\in H\setminus \Lambda$ we then have  $f(x_0)\neq e$, while $f(x_i)=e$ for all $i\neq 0$. Thus $$x_0\notin \langle\langle \psi^i(x_0)\mid i\in \Z\setminus \set{0}\rangle\rangle =\langle\langle x_i \mid i\in \Z\setminus \set{0}\rangle\rangle$$ and hence using \cref{impli} we construct continuum many non-atomic, weakly mixing invariant random subgroups in $G$.

Note that this covers the case where $\phi$ is an automorphism of a non-trivial normal subgroup of $H$. We also have the following application.

\begin{cor} If $n,m\in \Z\setminus\{0\}$ are not relatively prime, then there are continuum many non-atomic, weakly mixing invariant random subgroups on $BS(n,m)=\langle x,t\mid tx^nt^{-1}=x^m\rangle$.
\end{cor}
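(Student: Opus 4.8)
The plan is to realize $\mathrm{BS}(n,m)$ as an HNN-extension falling under case (2) above, so that the general construction applies. Write $\mathrm{BS}(n,m) = \langle x, t \mid t x^n t^{-1} = x^m\rangle$ and observe that this is precisely the HNN-extension $\langle H, t \mid t^{-1} a t = \phi(a), a \in A\rangle$ with $H = \langle x\rangle \cong \Z$ (infinite cyclic), $A = \langle x^m\rangle \leq H$ (here I need to be careful with the direction of the stable letter relation; up to replacing $t$ by $t^{-1}$ the relation $t x^n t^{-1} = x^m$ becomes $t^{-1} x^m t = x^n$, so $A = \langle x^m\rangle$ and $\phi\colon A \to H$ is the embedding determined by $\phi(x^m) = x^n$). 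This $\phi$ is a well-defined injective homomorphism since $\langle x\rangle$ is torsion-free, so the hypotheses ``$A \leq H$ and $\phi\colon A \to H$ an embedding'' are satisfied.

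The key step is then to verify the crucial non-triviality hypothesis of case (2), namely $\langle\langle A \cup \phi(A)\rangle\rangle \neq H$. Here $A \cup \phi(A) = \langle x^m\rangle \cup \langle x^n\rangle$ generates, as a normal subgroup of the abelian group $H = \langle x\rangle$, simply the subgroup $\langle x^m, x^n\rangle = \langle x^{\gcd(m,n)}\rangle$. By hypothesis $m, n$ are not relatively prime, so $d := \gcd(m,n) \geq 2$, and hence $\langle x^d\rangle$ is a proper subgroup of $\langle x\rangle = H$. Thus $\langle\langle A \cup \phi(A)\rangle\rangle = \langle x^d\rangle \subsetneq H$, as required.

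With this verified, the construction in case (2) applies verbatim: it produces continuum many non-atomic, weakly mixing invariant random subgroups on $G = \mathrm{BS}(n,m)$, which is exactly the statement of the corollary. I would simply remark that $\mathrm{BS}(n,m)$ is the HNN-extension as above and that the condition $\gcd(m,n) \geq 2$ is precisely what makes the normal closure of $A \cup \phi(A)$ proper in $H$, then invoke the general construction.

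The only real subtlety — and thus the main thing to get right rather than a genuine obstacle — is bookkeeping about the direction of the HNN relation and which of $x^n$, $x^m$ plays the role of ``$A$'' versus ``$\phi(A)$''; either way the normal closure computation gives $\langle x^{\gcd(m,n)}\rangle$, so the argument is robust. Everything else is a direct citation of case (2) of this section, which in turn rests on \cref{impli}.
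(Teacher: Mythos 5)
Your proof is correct and is essentially identical to the paper's: both realize $\mathrm{BS}(n,m)$ as the HNN-extension of $\Z$ with respect to the isomorphism between $n\Z$ and $m\Z$ and observe that $\langle\langle n\Z\cup m\Z\rangle\rangle=\gcd(n,m)\Z$ is proper precisely because $n,m$ are not relatively prime, so the general construction for HNN-extensions applies. Your extra care about the direction of the stable-letter relation is fine but, as you note, immaterial to the normal-closure computation.
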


\begin{proof}
We have that $BS(n,m)$ is the HNN-extension of $\Z$ with respect to the isomorphism $\phi\colon n\Z\rightarrow m\Z$ and $\langle\langle n\Z\cup m\Z\rangle\rangle = gcd(n,m)\Z$.
\end{proof}

\subsection{Non-abelian free groups}
We will now turn our attention towards the non-abelian free groups. It follows already from the results in \cite{BGK17} that these groups admit continuum many non-atomic, weakly mixing invariant random subgroups. In this subsection we show how the co-induction can be used to give alternative constructions of invariant random subgroups with these properties.

\medskip
(1) First we will use the co-induction operation from $\F_\infty$ to various semi-direct products of the form $\F_\infty \rtimes_\phi \Z$, where $\phi$ is induced by a permutation of the generators of $\F_\infty$, to construct new invariant random subgroups on $\F_\infty$.

\medskip
{\it Construction of continuum many non-atomic, weakly mixing invariant random subgroups on $\F_\infty$.}

\medskip
Fix $\F_\infty=\langle b, a_k\mid k\in \N\rangle$ and let $(B_k)_k\subseteq X$ be a Borel partition with $\mu(B_k)=2^{-k-1}$. For each $k\in \N$, put $A_k=B_0\cup B_1\cup\cdots \cup B_k$ and fix a free action $\F_\infty\ac^{\alpha_k}B_k$. Then define $\alpha\in A(\F_\infty,X,\mu)$ by $a_k\cdot^\alpha x=x$ if $x\in A_k$ and $a_k\cdot^{\alpha} x =a_k\cdot^{\alpha_j}x$ if $x\in B_j$ for some $j>k$. Finally, let $b$ act as a weakly mixing transformation on $X$ to ensure that $\alpha$ is weakly mixing.

Next, let $S\subseteq \N$ be infinite and let $\pi_S\colon \N\rightarrow \N$ be a permutation which is transitive on $S$ and fixes every element of $\N\setminus S$.  We then define $\phi_S \colon \Z\rightarrow \text{Aut}(\F_\infty)$ by  $\left(\phi_S(z)\right)(b)=b$ and $\left(\phi_S(z)\right)(a_k)=a_{\pi_S^{z}(k)}$  for all $z\in \Z$ and $k\in \N$. Consider $\Delta_S=\F_\infty\rtimes_{\phi_S}\Z$ and let 
\[\theta_S=\type\left(\cind_{\F_\infty}^{\Delta_S}(\alpha)_{|\F_\infty}\right)=\left(\cind_{\F_\infty}^{\Delta_S}(\type(\alpha))\right)_{|\sub(\F_\infty)}\in \irs(\F_\infty).\]
Note that by \cref{mixact} we have $\theta_S$ is weakly mixing and for $k\in \N$ we have
\begin{equation*}
\theta_S(N_{a_k}^{\F_\infty})=\begin{cases}
\prod_{z\in \mathbb{Z}} \mu\left(\text{Fix}_\alpha(a_{\pi_S^z(k)})\right) \qquad &\text{if} \quad k\in S\\
\prod_{z\in \mathbb{Z}} \mu\left(\text{Fix}_\alpha(a_{k})\right) \qquad &\text{if} \quad k\notin S\\
\end{cases}.
\end{equation*}
So for $k\in \N$ it holds that \[\theta_S(N_{a_k}^{\F_\infty})\in (0,1) \iff k\in S\] and hence 
$\theta_S$ is non-atomic. Moreover, this implies that whenever $S,T\subseteq \N$ are infinite with $S\neq T$ we have $\theta_S\neq \theta_T$.

\medskip
(2) Next consider another construction using co-induction, which allows us to construct continuum many  non-atomic, weakly mixing invariant random subgroups on every non-abelian free group. 

\medskip
{\it Construction of continuum many non-atomic, weakly mixing invariant random subgroups on $\F_n$ for $n\in \N\cup\{\infty\}$ with $n\geq 2$.}

\medskip
Fix $n\in \N\cup\set{\infty}$ with $n\geq 2$ and some free generators $\F_n=\langle a_i\mid i< n\rangle$. Consider the surjective group homomorphism $\phi\colon \F_n\rightarrow \Z$ given by $\phi(a_i)=0$ for $0<i<n$ and $\phi(a_0)=1$. Then let $\Gamma=\ker(\phi)$ and note that
\[\Gamma=\langle a_0^{-k}a_ia_0^k\mid k\in \Z, 0<i<n\rangle.\]
This set freely generates $\Gamma$ as a copy of $\F_\infty$ inside $\F_n$.
Moreover, the set $T=\set{a_0^k\mid k\in \Z}$ constitutes a transversal for $\F_n/\Gamma$.

Now for each $\lambda\in (0,1)$ let $(T_{k}^\lambda)_{k\in \Z}\in \Aut(X,\mu)$ satisfy that the action induced by $\langle T_{-1}^\lambda,T_0^\lambda,T_1^\lambda \rangle $ is weakly mixing and for each $k\in \Z$ we have 
\begin{equation*}
\mu(\text{Fix}(T_k^\lambda))=\begin{cases}
3^{-1} \qquad &\text{if} \quad |k|< 2 \\
\lambda \qquad &\text{if} \quad |k|=2\\
1-2^{-k-1} \qquad &\text{if} \quad |k|>2\\
\end{cases}.
\end{equation*}
One way to choose $T_{-1}^\lambda ,T_0^\lambda, T_1^\lambda$ is to decompose $X=X_{-1}\sqcup X_0\sqcup X_1$ such that $\mu(X_{-1})=\mu(X_0)=\mu(X_1)=3^{-1}$ and then let $T_j$ be weakly mixing when restricted to $X\setminus X_j$ and trivial on $X_j$ for $j\in \set{-1,0,1}$.
Next, define an action $\alpha_\lambda\in A(\Gamma ,X,\mu)$ by letting   $a_0^{-k}a_1a_0^k \cdot^{\alpha_\lambda} x = T_k^{\lambda}(x)$ and $a_0^{-k}a_ia_0^k \cdot^{\alpha_\lambda} x = x$ for all $1<i<n$ and $k\in \Z$. Then put $\theta_\lambda =\type(\alpha_\lambda)$. Note that all conditions of \cref{contmeas} are satisfied with respect to $a_1\in \Gamma$ and we have that
\[\cind_\Gamma^{\F_n}(\theta_\lambda)(N_{a_1}^{\F_n})=\prod_{k\in \Z}\theta_\lambda(N_{a_0^{-k}a_1a_0^k}^\Gamma)=\lambda^23^{-3}\prod_{k\in \Z\setminus \set{-2,\ldots,2}}(1-2^{-k-1}).\]
Thus $\cind_\Gamma^{\F_n}(\theta_{\lambda_0})\neq \cind_\Gamma^{\F_n}(\theta_{\lambda_1})$ for all $\lambda_0,\lambda_1\in (0,1)$ with $\lambda_0\neq \lambda_1$.

\noindent
\begin{remark} Using an action similar to the one in the construction above one can give a proof of the following algebraic fact: Let $s,w_1,w_2, \ldots \in \F_\infty$ satisfy $w_i^{-1}sw_i \neq w_j^{-1}sw_j$ if $i\neq j$. Then the sequence
\[ \set{ w_n^{-1}sw_n \mid n\in \N }\]
does not extend to a basis of $\F_\infty$. Indeed, assume towards a contradiction, that we may extend the sequence to a basis of $\F_\infty$. Then we would have that $\set{ w_n^{-1}sw_n \mid n\in \N }$ generates a copy, $F_\infty$, of $\F_\infty$ as a subgroup of $\F_\infty$. So let $a\in \act(F_\infty,X,\mu)$ be an action such that $d_u((w_n^{-1}sw_n)^a,1)\neq 0$ for all $n\in \N$ and
\[d_u((w_n^{-1}sw_n)^a, 1)\rightarrow 0\]
as $n\rightarrow \infty$ in $\text{Aut}(X,\mu)$.
Now, since the sequence extends to a basis, we may extend this action to an action $b\in A(\F_\infty,X,\mu)$. Therefore we would have
\[d_u((w_n^{-1}sw_n)^a,1) =d_u((w_n^{-1}sw_n)^b,1)= d_u((w_n^{-1})^bs^bw_n^b,1)=d_u(s^b,1),\]
which contradicts the convergence above.
\end{remark}

\subsection{Free products with normal amalgamation}
Here we will use co-induction to give constructions of non-atomic, weakly mixing invariant random subgroups on certain free products of groups with normal amalgamation. Other constructions can be found in \cite{BGK17} but our proofs use completely different and elementary methods. First we will consider free products without amalgamation.

\medskip
{\it Construction of continuum many non-atomic, weakly mixing invariant random subgroups on $G\ast H$, where $G$, $H$ are non-trivial countable groups with $H$ infinite, with support in
\[\Gamma=\langle [g,h]\mid g\in G, h\in H\rangle.\] Moreover, we can ensure that these invariant random subgroups are weakly mixing, when restricted to $\Gamma$.}

\medskip
Let $\Delta= G\ast H$, consider the homomorphism $\phi\colon \Delta\rightarrow G\times H$ induced by the homomorphisms $g\mapsto (g,e_H), h\mapsto (e_G, h)$, and put $\Gamma=\ker \phi$. Then $\Gamma$ is freely generated by the commutators:
\[\Gamma=\langle [g,h]\mid g\in G\setminus \set{e_G}, h\in H\setminus\set{e_H}\rangle,\]
where $[g,h]=ghg^{-1}h^{-1}$ and $T=\set{ gh \mid g\in G, h\in H}$ is a transversal for the left cosets $\Delta/\Gamma$. Now fix $g_0\in G\setminus \set{e_G}$ and $h_0\in H\setminus \set{e_H}$. For each $\lambda\in (0,1)$, let $a_\lambda\in A(\Gamma,X,\mu)$ be an action satisfying $\mu(\fix_{a_\lambda}([g_0,h_0]))=\lambda$ and $\mu(\fix_{a_\lambda}([g,h]))=1$ for all $g\in G\setminus \set{g_0}$ and $h\in H\setminus \set{h_0}$. Let $\theta_\lambda=\type(a_\lambda)$ and note that
\[\cind_\Gamma^\Delta(\theta_\lambda)\left(N_{[g_0,h_0]}^\Delta\right)=\prod_{gh\in T}\theta_\lambda\left(N_{h^{-1}g^{-1}[g_0,h_0]gh}^\Gamma\right).\]
We have
\[h^{-1}g^{-1}[g_0,h_0]gh=[h^{-1},g^{-1}g_0][g^{-1}g_0,h^{-1}h_0][h^{-1}h_0,g^{-1}][g^{-1},h^{-1}]\]for all $g\in G$, $h\in H$. Thus $[g_0,h_0]$ or its inverse is in the reduced word over the alphabet of non-trivial commutators of
$$h^{-1}g^{-1}[g_0,h_0]gh$$ if and only if
\[(g,h)\in \set{(e_G,h_0^{-1}),(e_G,e_H), (g_0^{-1},e_H),(g_0^{-1},h_0^{-1})}.\]
Therefore $\cind_\Gamma^\Delta(\theta_\lambda)\left(N_{[g_0,h_0]}^\Delta\right)=\lambda^4$ and so $\left(\cind_\Gamma^\Delta(\theta_\lambda)\right)_{\lambda\in (0,1)}$ constitute a continuum size family of non-atomic, weakly mixing invariant random subgroups of $\Delta$.

To ensure weakly mixing when restricted to $\Gamma$, let $h_1,h_2,h_3\in H$ satisfy that  $h_0,h_1,h_2,h_3$ are distinct and that $h_jh_0\neq h_i$, $h_jh_0^{-1}\neq h_i$ for all $i,j\in \set{0,1,2,3}$. Then modify $a_\lambda$ such that the action of $[g_0,h_1], [g_0,h_2], [g_0,h_3]$ is weakly mixing and each satisfies $\mu(\fix_{a_\lambda}([g_0,h_{i}]))=1/3$ for $i\leq 3$. Note that the relation constrains on $h_0,h_1,h_2,h_3$ ensure that at most one of $[g_0,h_{i}]$ satisfies that it or its inverse is in the word
\[[h^{-1},g^{-1}g_0][g^{-1}g_0,h^{-1}h_0][h^{-1}h_0,g^{-1}][g^{-1},h^{-1}],\]
when $g\in G$ and $h\in H$. Moreover, as with $[g_0,h_0]$, each will appear exactly four times. So we then have
\[\cind_\Gamma^\Delta(\theta_\lambda)\left(N_{[g_0,h_0]}^\Delta\right)=\lambda^43^{-12}.\]
Again, $\left(\cind_\Gamma^\Delta(\theta_\lambda)\right)_{\lambda\in (0,1)}$ constitute a continuum size family of non-atomic, weakly mixing invariant random subgroups of $\Delta$. These will now also be weakly mixing, when restricted to $\Gamma$ by \cref{mixact}.

\medskip

\begin{remark}
Gaboriau pointed out that in the paper D. Gaboriau and N. Bergeron,  Asymptotique des nombres de Betti, invariants $\ell^2$ et
laminations, {\it Comment. Math. Helv.}, {\bf 79(2)} (2004), 362--395, 2004, the following result is proved:
Let $G$ and $H$ be residually finite, infinite groups such that either
$b_1(G) - b_1^{(2)}(G)\not=1$ or 
$b_n(G) - b_n^{(2)}(G)\not=0$ for $n\geq 2$.
Then the free product $G\ast H$ admits continuum many IRS. These IRS are distinguished by their $\ell^2$-Betti numbers. 
(Here, $b_n(G)$ is the $n$-th Betti number of $G$ while  $b_n^{(2)}(G)$ is its $n$-th $\ell^2$-Betti number.)

\end{remark}

Next, note the following well-known simple fact.

\begin{prop}\label{epi} 
Let $\Delta,\Gamma$ be countable groups and $\phi\colon \Delta\rightarrow \Gamma$ a surjective group homomorphism. Then there is an embedding $\Psi\colon \irs(\Gamma)\rightarrow \irs(\Delta)$ such that if $\theta\in \irs(\Gamma)$ is ergodic, weakly mixing or non-atomic, so is $\Psi(\theta)$.
\end{prop}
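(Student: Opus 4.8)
The plan is to use the map on subgroup spaces induced by taking preimages under $\phi$. Set $N=\ker\phi$ and define $\phi^*\colon \sub(\Gamma)\to\sub(\Delta)$ by $\phi^*(\Lambda)=\phi^{-1}(\Lambda)$. First I would record the elementary properties of $\phi^*$. It is continuous, since for each $\delta\in\Delta$ the condition $\delta\in\phi^{-1}(\Lambda)$ is the clopen condition $\phi(\delta)\in\Lambda$ on $\Lambda$. It is injective, because $\phi$ is surjective and hence $\phi(\phi^{-1}(\Lambda))=\Lambda$ for all $\Lambda$. A direct computation gives $\phi^{-1}(\phi(\delta)\Lambda\phi(\delta)^{-1})=\delta\phi^{-1}(\Lambda)\delta^{-1}$ for all $\delta\in\Delta$ and $\Lambda\in\sub(\Gamma)$, so $\phi^*$ intertwines the conjugation action $\Delta\ac\sub(\Delta)$ with the action of $\Delta$ on $\sub(\Gamma)$ obtained by composing $\phi$ with the conjugation action $\Gamma\ac\sub(\Gamma)$. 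Finally its image is the closed $\Delta$-invariant set $\set{H\in\sub(\Delta)\mid N\subseteq H}$, and $\phi^*$ is a homeomorphism onto this set (a continuous injection from a compact metric space).

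Next I would set $\Psi(\theta)=(\phi^*)_*\theta$. Since $\phi^*$ is Borel, intertwines the two actions, and $\theta$ is $\Gamma$-invariant (hence invariant under the action of $\Delta$ via $\phi$), the measure $\Psi(\theta)$ is a $\Delta$-invariant Borel probability measure on $\sub(\Delta)$, i.e.\ $\Psi(\theta)\in\irs(\Delta)$. Pushing forward along a continuous map is continuous for the weak$^*$-topologies, and injective because $\phi^*$ is injective (extend continuous functions off the closed set $\phi^*(\sub(\Gamma))$ by Tietze); as $\irs(\Gamma)$ and $\irs(\Delta)$ are compact metrizable, $\Psi$ is a homeomorphism onto its image, i.e.\ an embedding.

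For the preservation statements I would exploit that $\phi^*$ is a measure-space isomorphism from $(\sub(\Gamma),\theta)$ onto the full-$\Psi(\theta)$-measure $\Delta$-invariant set $\phi^*(\sub(\Gamma))$, conjugating the $\Gamma$-action to the restricted $\Delta$-action. The crucial point is surjectivity of $\phi$: a subset of $\sub(\Gamma)$ is invariant under the $\Delta$-action factoring through $\phi$ if and only if it is $\Gamma$-invariant. Hence ergodicity of $(\sub(\Gamma),\theta)$ transfers to ergodicity of $\Delta\ac(\sub(\Delta),\Psi(\theta))$, so $\Psi(\theta)$ is ergodic whenever $\theta$ is. For weak mixing I would invoke the standard characterization that a p.m.p.\ action is weakly mixing iff its diagonal action on the square is ergodic: $\phi^*\times\phi^*$ conjugates $\Gamma\ac(\sub(\Gamma)^2,\theta^2)$ to the diagonal $\Delta$-action on the full-measure set $\phi^*(\sub(\Gamma))^2\subseteq\sub(\Delta)^2$, so ergodicity of the former (again using surjectivity of $\phi$) yields $\Psi(\theta)$ weakly mixing. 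Finally, non-atomicity is immediate from injectivity of $\phi^*$: for $H\in\sub(\Delta)$ we have $\Psi(\theta)(\set{H})=\theta((\phi^*)^{-1}(\set{H}))$, which equals $\theta(\set{\phi(H)})=0$ when $N\subseteq H$ and $0$ otherwise.

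There is no serious obstacle here; the one point to handle carefully is that all three preservation properties rest on the same observation — that $\phi$ being onto makes $\Delta$-invariance (through $\phi$) coincide with $\Gamma$-invariance — combined, in the weak mixing case, with the equivalence ``weakly mixing $\iff$ the product action is ergodic.''
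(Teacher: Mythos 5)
Your proposal is correct and follows essentially the same route as the paper: both define $\Psi$ as the push-forward of $\theta$ under the preimage map $\Lambda\mapsto\phi^{-1}(\Lambda)$, which is an equivariant homeomorphism of $\sub(\Gamma)$ onto the closed invariant set of subgroups containing $\ker\phi$. You simply spell out the transfer of ergodicity, weak mixing and non-atomicity, which the paper declares to be clear.
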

\begin{proof} Note that the map
$\Phi\colon\sub(\Gamma)\rightarrow \sub(\Delta)$ given by $\Phi(\Lambda)=\phi^{-1}(\Lambda)$ is a homeomorphism with image
\[\Phi(\sub(\Gamma))=\set{\Lambda\in \sub(\Delta)\mid \ker(\phi)\subseteq \Lambda}.\]
Moreover, we have 
\[\Phi\left(\phi(\delta)\Lambda\phi(\delta)^{-1}\right) = \delta\Phi(\Lambda)\delta^{-1}\]
for all $\Lambda\in \sub(\Gamma)$ and $\delta\in \Delta$. So let $\Psi\colon \irs(\Gamma)\rightarrow\irs(\Delta)$ be given by $\Psi(\theta)=\Phi_*\theta$. It is then clear that $\Psi(\theta)$ is ergodic, weakly mixing or non-atomic if $\theta$ is. Since 
\[\Psi(\theta)(N_F^\Delta)=\theta(N_{\phi(F)}^\Gamma),\]
it follows by \cref{convbasic} that $\Psi$ is continuous.
\end{proof}

Now by use of the previous construction for free products and \cref{epi}, we can construct continuum many non-atomic, weakly mixing invariant random subgroups for the groups $G\ast_AH$, where $G, H$ and $A$ are countable groups satisfying that $A\unlhd G,H$ with $G/A$ non-trivial and $H/A$ infinite. 
This follows directly from \cref{epi} applied to the surjective group homomorphism $\phi\colon G\ast_AH\rightarrow G/A\ast H/A$.

The same applies to all the groups  $\ast_{i\in \N}H_i$, where $(H_i)_{i\in \N}$ is a countable family of countable groups with $H_0$ infinite and $H_1$ non-trivial, by looking at the natural surjective group homomorphism $\phi\colon \ast_{i\in \N}H_i \rightarrow H_0\ast H_1$.

\subsection{Automorphism invariant random subgroups of the free group of rank two}

In this part we will use the co-induction operation to construct non-atomic invariant random subgroups on $\F_2$ which are invariant under the action of the full automorphism group, as well.  Moreover, these invariant random subgroups will be weakly mixing with respect to the action of the automorphism group. 

Fix a basis $\F_2=\langle a,b\rangle$. We think of an element of $\F_2$ as represented by the induced reduced word in the letters $\set{a,b,a^{-1},b^{-1}}$. Consider the automorphisms $\chi,\xi,\phi,\psi,\tau \in \text{Aut}(\F_2)$ given by 
\[\chi(a)=a, \qquad \chi(b)=b^{-1}, \qquad \xi(a)=a^{-1}, \qquad \xi(b)=b, \qquad \tau(a)=b, \]
\[\tau(b)=a, \qquad \phi(a)=ab, \qquad \phi(b)=b, \qquad \psi(a)=a \quad \text{and} \quad \psi(b)=ba.\]
Let $\text{Fr}_+(\phi,\psi)$ denote the set of automorphisms generated by using only $\phi$ and $\psi$ (and not $\phi^{-1},\psi^{-1}$). Then
\begin{align*}
R= \lbrace\rho, \ \rho\tau, \ \xi\sigma, \ &\xi\sigma\tau, \ \rho\xi, \ \rho\tau\xi, \  \xi\sigma\xi, \ \xi\sigma\tau \xi, \ \rho\chi, \ \rho\tau\chi, \ \xi\sigma\chi,\\
&\xi\sigma\tau\chi, \ \rho\xi\chi, \ \rho\tau\xi\chi, \ \xi\sigma\xi\chi, \ \xi\sigma\tau\xi\chi \mid \sigma,\rho\in \FR, \sigma\neq 1\rbrace
\end{align*}
is a set of representatives for the left cosets in $\Aut(\F_2)/\F_2$, where $\F_2$ is identified with the subgroup of inner automorphisms (see \cite[Section 3]{CMZ81}). Note that $1\in \FR$ denotes the identity map.

Consider the word $w=aba^2b^2a^3b^3\cdots a^nb^n$ for some  $n>101$. The first goal is to prove that the family
\[\set{\eta(w)\mid \eta\in R}\]
satisfies the $C'(1/6)$ cancellation property. Recall that a subset of words $S\subseteq \F_2$ has the \textbf{$C'(1/6)$ cancellation property} if the set $\tilde{S}$ of all cyclically reduced cyclic conjugates of the words in $S$ and their inverses satisfies that if $u\in \F_2$ is an initial segment of $x,y\in \tilde{S}$ with $x\neq y$, then 
$$|u|<\frac{1}{6}\min \set{|x|,|y|}.$$
Here $|\cdot |$ denotes the length of a word in $\F_2$.  In case $S$ satisfies the $C'(1/6)$ cancellation property and $z\in \langle\langle S\rangle\rangle\setminus \tilde{S}$ is a cyclically reduced word, then there is $x\in \tilde{S}$ such that $|x|< |z|$.
For a proof of this see for example \cite[Theorem 4.5 in Chapter V]{LS77}.
We will use this fact to ensure that condition $(1)$ in \cref{impli} is satisfied for $\F_2\leq \Aut(\F_2)$.

Put
\begin{align*}
w_0&=w=aba^2b^2\cdots a^nb^n\\
w_1&=\xi\chi(w)=a^{-1}b^{-1}a^{-2}b^{-2}\cdots a^{-n}b^{-n}\\
w_2&=\xi(w)=a^{-1}ba^{-2}b^2\cdots a^{-n}b^{n}\\
w_3&=\chi(w)=ab^{-1}a^2b^{-2}\cdots a^nb^{-n}\\
w_4&=\tau w=bab^2a^2\cdots b^na^n\\
w_5&=\tau \xi\chi(w)=b^{-1}a^{-1}b^{-2}a^{-2}\cdots b^{-n}a^{-n}\\
w_6&=\tau \xi(w)=b^{-1}ab^{-2}a^2\cdots b^{-n}a^{n}\\
w_7&=\tau \chi(w)=ba^{-1}b^2a^{-2}\cdots b^na^{-n}.
\end{align*}
and let $v_i=w_i^{-1}$ for all $0\leq i\leq 7$. 
Below we will use the following terminology. For two words $x,y\in \F_2$ a \textbf{cancellation} of $x$ and $y$ is a string $u\in \F_2$ which appears in the reduced cycles of both $x,y$. 
We say that $u$ is a \textbf{bad cancellation} of $x$ and $y$ if $$|u|\geq 1/6\min \set{\|x\|,\|y\|}.$$ Here $\|\cdot \|$ denote the length of the induced cyclically reduced word.
We call a cancellation for \textbf{maximal} if it cannot be extended.
The goal is then to prove that there is no bad cancellation between any pair of words in the set
\[B=\set{ \rho(w_i),\xi\sigma(w_i),\rho(v_i),\xi\sigma(v_i)\mid \sigma,\rho\in \FR, \ \sigma\neq 1, \ 0\leq i\leq 7}.\]
Let 
\begin{align*}
B_0&=\set{\rho(w_i),\rho(v_i)\mid \rho\in\FR, \ 0\leq i\leq 7}\\
B_1&=\set{\xi\sigma(w_i),\xi\sigma(v_i)\mid  \sigma\in\FR \setminus\set{1},\  0\leq i\leq 7}.
\end{align*}
 Then it suffices to prove that there is no bad cancellation among the words in $B_0$ and then prove that there cannot be any bad cancellation between a word from $B_0$ and a word from $B_1$. We will begin with the former. Most of our arguments for this are based on the following two lemmas.
 
For a word $x\in \F_2$, we let $\ol{x}\in \F_2$ denote the word obtained from $x$ by switching every negative power of $a$ and $b$ to be positive.
 
\begin{lem}\label{estimatecancellation}
Let $x,y\in\F_2$, $\rho\in \FR$ and let $q$ be a cancellation of $\rho(x),\rho(y)$. Assume $N\in \N$ satisfies that for any cancellation $c$ of $x,y$ the total number of $a$'s and the total number of $b$'s in $\ol{c}$ are both less than $N$. Then
\[ |q|\leq (N+2)\left(|\rho(a)| +|\rho(b)|\right).\]
\end{lem}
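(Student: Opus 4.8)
The plan is to analyze how the automorphism $\rho\in\FR$ acts on a reduced word and to track where a long common substring $q$ of $\rho(x)$ and $\rho(y)$ can come from. The key structural fact I would use is that every $\rho\in\FR$ is a positive word in $\phi,\psi$, and both $\phi,\psi$ send $a\mapsto$ (a positive word in $a,b$) and $b\mapsto$ (a positive word in $a,b$) — more precisely $\phi,\psi$ are \emph{positive} automorphisms, so $\rho(a)$ and $\rho(b)$ are positive words in $a,b$ (no inverses), and moreover for any reduced word $z$ the image $\rho(z)$ is obtained by substituting $\rho(a),\rho(b)$ for the positive letters of $z$ and $\rho(a)^{-1},\rho(b)^{-1}$ for the negative letters, with cancellation occurring only at the interfaces between consecutive substituted blocks. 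The first step is therefore to make precise this ``no interior cancellation'' phenomenon: because $\rho(a),\rho(b)$ are positive words, when we write $\rho(z)$ as the concatenation of blocks $\rho(\ell_1)\rho(\ell_2)\cdots$ (one block per letter $\ell_i$ of $z$), cancellation between $\rho(\ell_i)$ and $\rho(\ell_{i+1})$ happens only when one is a positive block and the next is a negative block, and it cannot consume an entire block on either side (this uses that $\{\rho(a),\rho(b)\}$ is a basis, hence no $\rho(a)^{\pm1}$ is a prefix or suffix of $\rho(b)^{\pm1}$ and neither is a proper power-cancelling the other). Consequently, in the reduced form of $\rho(z)$ each original letter $\ell_i$ of $z$ still contributes a nonempty ``surviving core'' of its block $\rho(\ell_i)$.

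The second step is to locate $q$ inside $\rho(x)$ (equivalently $\rho(y)$) and read off which original letters of $x$ it overlaps. Since $q$ is a subword of the reduced word $\rho(x)$, and by Step 1 the reduced word $\rho(x)$ decomposes into the surviving cores of the blocks coming from the letters of $x$, the occurrence of $q$ meets some consecutive run of letters $\ell_i,\ell_{i+1},\dots,\ell_{i+r}$ of $x$; all but possibly the first and last of these have their \emph{entire} block $\rho(\ell_j)$ (up to the boundary cancellations, which by Step 1 are proper) contained in $q$. The substring $\ell_{i+1}\cdots\ell_{i+r-1}$ of $x$ is then, letter by letter, matched against the corresponding substring of $y$ read off from the same positions of $q$ inside $\rho(y)$; I would argue that this forces $\ell_{i+1}\cdots\ell_{i+r-1}$ (or a cyclic/inverse variant of it, matching the definition of ``cancellation'') to be a common subword of (the reduced cycles of) $x$ and $y$, i.e.\ a cancellation $c$ of $x$ and $y$ in the sense of the hypothesis. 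Here is where the bound on $N$ enters: $\bar c$ has fewer than $N$ occurrences of $a$ and fewer than $N$ of $b$, so $c$ has fewer than $2N$ letters; but then $q$ is contained in $\rho(\ell_i)\,\rho(c)\,\rho(\ell_{i+r})$, whose length is at most $|\rho(a)|+|\rho(b)|$ (for the two end letters, each bounded by $\max\{|\rho(a)|,|\rho(b)|\}\le|\rho(a)|+|\rho(b)|$, wait — I will just bound each of the two boundary blocks by $|\rho(a)|+|\rho(b)|$ jointly) plus $|\rho(c)|\le (2N)\max\{|\rho(a)|,|\rho(b)|\}\le N(|\rho(a)|+|\rho(b)|)$; combining gives $|q|\le (N+2)(|\rho(a)|+|\rho(b)|)$, after absorbing the two boundary blocks into the ``$+2$''.

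The third step is just the bookkeeping of the constant: counting occurrences of $a$ and $b$ separately in $\bar c$ (as the hypothesis is phrased that way) rather than total length, one gets $|\rho(c)|\le (\#a\text{'s})\,|\rho(a)|+(\#b\text{'s})\,|\rho(b)|< N|\rho(a)|+N|\rho(b)| = N(|\rho(a)|+|\rho(b)|)$, which is actually cleaner than going through the crude $2N$ bound; then adding the at-most-two partial boundary blocks, each of length at most $\max\{|\rho(a)|,|\rho(b)|\}$, yields the stated $(N+2)(|\rho(a)|+|\rho(b)|)$. I expect the main obstacle to be Step 1 and the first half of Step 2: rigorously controlling the boundary cancellations between adjacent $\rho$-blocks and showing that a middle stretch of $q$ genuinely records a \emph{cancellation of $x$ and $y$} in the precise sense used in the paper (reduced cycles, inverses) — one has to handle the cyclic-word aspect and the fact that $q$ might straddle the ``seam'' of a cyclic word, and be careful that the end letters $\ell_i,\ell_{i+r}$ contribute only a bounded, not arbitrary, overhang because no $\rho$-block is a prefix/suffix of another. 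Everything after that is a direct length estimate.
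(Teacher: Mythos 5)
Your Step~1 rests on a structural claim that is false. It is not true that for $\rho\in\FR$ no $\rho(a)^{\pm1}$ is a prefix or suffix of $\rho(b)^{\pm1}$, and it is not true that boundary cancellation between adjacent blocks cannot consume an entire block: already for $\rho=\phi$ we have that $\phi(b)=b$ is a suffix of $\phi(a)=ab$, so $\phi(ab^{-1})=ab\cdot b^{-1}=a$ and the block of the letter $b^{-1}$ disappears entirely; worse, $\phi^{k}(ab^{-k})=ab^{k}\cdot b^{-k}=a$, so arbitrarily many consecutive blocks can be swallowed. (Being a basis does not preclude prefix/suffix relations among the images of the generators.) This destroys the ``surviving core'' decomposition of $\rho(x)$ and, with it, the heart of Step~2: you can no longer read off from an occurrence of $q$ in $\rho(x)$ a run of letters of $x$ whose interior blocks lie wholly inside $q$, the block decompositions of the two occurrences of $q$ in $\rho(x)$ and $\rho(y)$ need not be aligned, and distinct subwords of $x$ can have images agreeing on long stretches after collapsing (e.g.\ $\phi^{k}(a)=ab^{k}$ versus $\phi^{k}(ab^{-j})=ab^{k-j}$). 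So the step ``the middle stretch of $q$ records a cancellation of $x$ and $y$'' is exactly where the argument fails, not merely where it needs polish; the reason you offer for why it should work is the false prefix/suffix claim.

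The paper avoids analyzing a general $\rho$ in one shot. It first proves, by a finite case analysis of the letters surrounding $q$, that a maximal cancellation $q$ of $S(x),S(y)$ for a \emph{single} $S\in\set{\phi,\psi}$ has one of the forms $S(c)$, $uS(c)$, $S(c)u^{-1}$, $uS(c)u^{-1}$, where $c$ is a maximal cancellation of $x,y$ and $u$ is the single letter $b$ (for $S=\phi$) or $a$ (for $S=\psi$). Iterating this along $\rho=S_N\cdots S_0$ yields a cancellation $c$ of $x,y$ plus boundary padding that telescopes to at most $2|\rho(ab)|=2\left(|\rho(a)|+|\rho(b)|\right)$; the hypothesis on $\ol{c}$ then gives $|\rho(c)|\leq N\left(|\rho(a)|+|\rho(b)|\right)$ and hence the stated bound. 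To salvage your one-shot block argument you would need a genuine unique-readability/synchronization statement for substrings of words built from $\set{\rho(a)^{\pm1},\rho(b)^{\pm1}}$, which is considerably harder than the single-generator case analysis and is not supplied by the basis property.
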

\begin{proof} First let $S\in \set{\phi,\psi}$ and $u\in \set{a,b}$ be such that $u=a\iff S=\psi$. Assume $q$ is a maximal cancellation of $S(x),S(y)$.
  Then there is a maximal cancellation $c$ of $x,y$ such that $q$ is equal to one of the following strings:
\[S(c), \quad uS(c), \quad S(c)u^{-1} \quad \text{or}\quad uS(c)u^{-1}.\]
This can be seen by considering the pre-images through $S$ of all possible strings of the form 
\[t_0t_1t_2q_0t_3t_4t_5,\]
where $t_0,\ldots, t_5\in \set{a,a^{-1},b,b^{-1}}$ and $q_0\in \F_2$ are such that $t_2q_0t_3=q$ and $t_0t_1t_2q_0t_3t_4t_5$ is a legal string in the reduced cycles induced by $S(x)$ and $S(y)$, together with the cases where $|q|\in \set{0,1}$, $\|x\|\leq |q|+3$ and $\|y\|\leq |q|+3$. The latter cases are handled by considering pre-images of strings as above where the relevant $t_i$'s are omitted.

Next, for $\rho\in \FR$, we let $S_0,\ldots ,S_N\in \set{\phi,\psi}$ and $u_0,\ldots, u_N\in \set{a,b}$ be such that $\rho=S_N\cdots S_0$ and
$u_i=a \iff S_i=\psi$, for $0\leq i\leq N$. Now  let $q$ be a maximal cancellation of $\rho(x),\rho(y)$. Then, by repeating the argument above, there is a maximal cancellation $c$ of $x,y$ such that
\begin{align*}
|q|&\leq |u_N| + \sum_{j=1}^N|(S_N\cdots S_j)(u_{j-1})| + |\rho(c)| + \sum_{j=1}^N|(S_N\cdots S_j)(u_{j-1}^{-1})| + |u_N^{-1}|\\
&\leq 2|\rho(ab)|+|\rho(c)|\\
&\leq (N+2)\left(|\rho(a)|+|\rho(b)|\right),
\end{align*}
since
\[\rho(ab)=abu_NS_N(u_{N-1})(S_NS_{N-1})(u_{N-2})\cdots (S_N\cdots S_1)(u_0)\]
and $|\rho(ab)|=|\rho(a)|+|\rho(b)|$.
\end{proof}
Note that the proof above also shows that if $C$ is the set of cancellations between $x,y$, then for any cancellation $q$ of $\rho(x),\rho(y)$ we have
\[|q|\leq \max\set{\ |\rho(c)|\ \mid c\in C} + 2(|\rho(a)|+|\rho(b)|).\]
Now we have a tool to bound the length of a cancellation between two words in $B_0$ from above. The next lemma bounds the length of a word in $B_0$ from below. We will in the following call $x\in \F_2$  \textbf{positive} if $x$ consists only  of positive powers of $a,b$. Similarly, we say $x$ is \textbf{negative} if $x$ consists only  of negative powers of $a,b$. Note that if $x$ is either positive or negative then we clearly have $|x|=\|x\|$.
\begin{lem}\label{estimatewordlength}
Let $\rho\in \FR$ and $z\in \set{w_0,v_0,\ldots, w_7,v_7}$. Then 
\[\|\rho(z)\|\geq \left(\frac{n(n-1)}{2}-2n\right)\left(|\rho(a)|+|\rho(b)|\right).\] 

\end{lem}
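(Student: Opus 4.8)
The strategy is to reduce the lower bound for $\|\rho(z)\|$ to an estimate on the cyclically reduced length of $z$ itself under a ``positive weight'' count, exploiting that every element of $\FR$ maps $a$ and $b$ to \emph{positive} words (words in $a,b$ with no inverses). First I would record the elementary fact that if $u\in\F_2$ is a positive word, then for any $\rho\in\FR$ the word $\rho(u)$ is obtained by substituting the positive words $\rho(a),\rho(b)$ for the letters of $u$ with no cancellation occurring, so $|\rho(u)| = (\#_a u)\,|\rho(a)| + (\#_b u)\,|\rho(b)|$, where $\#_a,\#_b$ count occurrences of $a,b$. This is the key algebraic input: $\phi,\psi$ send $\{a,b\}$ to $\{ab,b\}$, $\{a,ba\}$ respectively, both positive, and positivity is preserved under composition, and substitution of positive words into a positive word never creates cancellation.

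\textbf{Key steps.} Next I would handle the fact that the $w_i,v_i$ are not themselves positive (they involve $a^{-1}$ or $b^{-1}$), so I cannot apply the substitution formula directly. Here I would pass to $\rho$ applied to a word and then to its cyclically reduced form, and argue as follows. Write $z$ as a word of length $\binom{n+1}{2}\cdot 2 = n(n+1)$ in $\{a^{\pm1},b^{\pm1}\}$ (it is already cyclically reduced for each of the eight types, as one checks case by case). Group the letters of $z$ into maximal positive or maximal negative syllables; applying $\rho$ to each letter and using that $\rho$ of a single generator is positive of length $|\rho(a)|$ or $|\rho(b)|$, the only cancellation that can occur in $\rho(z)$ is \emph{between} $\rho$ of adjacent letters of opposite sign. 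A cancellation between $\rho(a)$ and $\rho(b^{-1})$ (etc.) is bounded in length by $\min\{|\rho(a)|,|\rho(b)|\}$, hence each of the at most — count them — sign changes in the cyclic word $z$ destroys at most $2\min\{|\rho(a)|,|\rho(b)|\} \le |\rho(a)|+|\rho(b)|$ letters. For $z = w_0 = aba^2b^2\cdots a^nb^n$ the sign changes number $0$; for the mixed types like $w_2 = a^{-1}ba^{-2}b^2\cdots$ the number of sign changes around the cycle is at most $2n$ (roughly one per block of $a$'s and one per block of $b$'s). Since $z$ contains $1+2+\cdots+n = \binom{n+1}{2}$ syllables of $a^{\pm1}$ and the same number of $b^{\pm1}$, i.e.\ $\#_a z + \#_b z \ge n(n+1) \ge \frac{n(n-1)}{2}\cdot 2$ in the relevant counting, the surviving length satisfies
\[
\|\rho(z)\| \;\ge\; (\#_a z)\,|\rho(a)| + (\#_b z)\,|\rho(b)| \;-\; 2n\bigl(|\rho(a)|+|\rho(b)|\bigr) \;\ge\; \Bigl(\frac{n(n-1)}{2} - 2n\Bigr)\bigl(|\rho(a)|+|\rho(b)|\bigr).
\]
I would make the syllable count precise: each of $w_0,\dots,w_7,v_0,\dots,v_7$ has, for each generator, blocks of sizes $1,2,\dots,n$, so $\min(\#_a z,\#_b z)\ge \binom{n+1}{2} \ge \frac{n(n-1)}{2}$, which after subtracting the $\le 2n$ cancellation sites gives the claimed bound uniformly in the choice of $z$.

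\textbf{Main obstacle.} The delicate point is controlling cancellation in $\rho(z)$ when $z$ is not positive: one must be sure that cancellation is \emph{localized} at the sign-change boundaries and cannot propagate (``zip'') across an entire positive syllable of $\rho$-images. This is where the structure of $\FR$ matters — I would verify that for $\rho\in\FR$, $\rho(a)$ and $\rho(b)$ are positive words neither of which is an initial or terminal segment of the other's inverse in a way that would allow deep cancellation, using that $\rho(a)$ and $\rho(b)$ always start with $a$ (inspecting $\phi,\psi$: $\phi(a)=ab,\phi(b)=b$ start with $a,b$; $\psi(a)=a,\psi(b)=ba$ start with $a,b$; and under composition the image of $a$ starts with $a$, while the image of $b$ may start with either) so that when we reduce $\rho(a)\rho(b)^{-1}$, cancellation stops after consuming at most $\min\{|\rho(a)|,|\rho(b)|\}$ letters and does not reach the far ends. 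Once this localization is established, the syllable-counting bookkeeping is routine. I would carry out the eight (really sixteen) cases of $z$ in a single table showing the $a$- and $b$-syllable block sizes and the number of sign changes, then invoke the uniform estimate above.
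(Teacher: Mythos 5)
Your overall architecture matches the paper's: positive/negative words suffer no cancellation under $\rho$, and for the mixed words one must bound the loss at the sign-change boundaries. The genuine gap is in your key quantitative claim that each boundary destroys at most $2\min\{|\rho(a)|,|\rho(b)|\}$ letters, together with the localization argument you sketch for it. This is false. Take $\rho=\phi^k$, so $\rho(a)=ab^k$, $\rho(b)=b$ and $\min\{|\rho(a)|,|\rho(b)|\}=1$. At the boundary $a^m\,b^{-m}$ (present in $w_3$) one reduces
\[
\rho(a)^m\rho(b)^{-m}=(ab^k)^m b^{-m}=(ab^k)^{m-1}a\,b^{k-m},
\]
so $\min(m,k)$ letters are destroyed on \emph{each} side: up to $2k$ letters at a single boundary, not $2$. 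The mechanism is exactly the ``zipping'' you flag as the main obstacle: since $\rho(b)$ is a suffix of $\rho(a)$ here, the cancellation consumes $\rho(b)^{-1}$ entirely and propagates into the next copy of $\rho(b)^{-1}$, $k$ times. Your proposed safeguard — that $\rho(a)$ and $\rho(b)$ ``always start with $a$'' — is factually wrong ($\phi(b)=b$, $\psi(b)=ba$), and in any case prefixes are irrelevant: for $\rho(a)\rho(b)^{-1}$ the cancellation is governed by the common \emph{suffix} of $\rho(a)$ and $\rho(b)$, which (as the example shows) can equal all of $\rho(b)$. So the step that is supposed to control the loss per boundary does not go through, even though the final total $2n\left(|\rho(a)|+|\rho(b)|\right)$ you write down happens to be a correct bound.

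The paper closes exactly this gap by not applying $\rho$ all at once. Writing $\rho=S_N\cdots S_0$ with $S_i\in\set{\phi,\psi}$, it applies one generator at a time and checks that a \emph{single} application of $\phi$ or $\psi$ to a reduced alternating product of positive and negative blocks cancels exactly one letter pair $u_iu_i^{-1}$ per boundary (this is where reducedness of the block decomposition is used, and it is easy to verify for one generator). The letters lost at stage $i$ then cost $|S_N\cdots S_{i+1}(u_i)|$ after the remaining substitutions, and the identity $\rho(ab)=ab\,u_NS_N(u_{N-1})\cdots(S_N\cdots S_1)(u_0)$ makes the per-boundary losses telescope to $|\rho(a)|+|\rho(b)|-2$, giving the total bound $2n\left(|\rho(a)|+|\rho(b)|\right)$ with at most $2n$ boundaries. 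If you want to keep your ``apply $\rho$ in one shot'' formulation, you would need to prove a correct per-boundary estimate of this telescoping type; the bound $2\min\{|\rho(a)|,|\rho(b)|\}$, and even $|\rho(a)|+|\rho(b)|$, per boundary are both violated by $\phi^k$ with $k>2$.
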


\begin{proof} It is enough to consider $w_0,\ldots, w_7$.
Moreover, it is clear that if $z\in\set{w_0,w_1, 
w_4,w_5}$, then 
\[\|\rho(z)\|=\frac{n(n-1)}{2}\left(|\rho(a)|+|\rho(b)|\right),\]
since there is no cancellation  due to the fact that $z$ is either positive or negative.  

For the remaining cases, we will begin with some observations. Assume $p,p_+,p_-\in \F_2$ satisfy that $p=p_+p_-$ is reduced, $p_+$ is positive and $p_-$ is negative. Moreover, let $S\in \set{\phi,\psi}$ and $u\in \set{a,b}$ be such that $u=a\iff S=\psi$. Then we have $S(p)=S(p_+)S(p_-)$. If both $p_+$ and $p_-$ are non-trivial, $S(p_+)$ will end with $u$ and $S(p_-)$ will begin with $u^{-1}$. Thus $uu^{-1}$ will be removed in the product. Since $p_+p_-$ is reduced there will not be any other reduction in $S(p_+)S(p_-)$. Note also that $S(p_+)$ is positive and  $S(p_-)$ is negative. Lastly, note that if instead $p=p_-p_+$ is reduced, then $S(p)=S(p_-)S(p_+)$ is also reduced.

Now let $x\in \F_2$ be neither positive or negative and let $\rho\in \FR$. Fix $S_0\ldots ,S_N\in \set{\phi,\psi}$ and $u_0,\ldots,u_N\in \set{a,b}$ such that $\rho=S_N\cdots S_0$ and $u_i=a\iff S_i=\psi$, for all $0\leq i\leq N$. 

Assume first that for all $0\leq j<N$ we have $S_j\cdots S_0(x)$ is neither positive nor negative.
 Then for each $0\leq i\leq N$, we may fix $k_i\geq 1$ together with positive  $p_{(i,+)}^1,\ldots p_{(i,+)}^{k_i}\in \F_2\setminus \set{e}$, and negative  $p_{(i,-)}^1,\ldots,p_{(i,-)}^{k_i}\in \F_2\setminus \set{e}$, such that for each $0\leq i<N$ there is a cyclically reduced cyclic conjugate of $x$ and of $S_{i}\cdots S_0(x)$ of the form
 \[p_{(0,+)}^1p_{(0,-)}^1p_{(0,+)}^2p_{(0,-)}^2\cdots p_{(0,+)}^{k_0}p_{(0,-)}^{k_0}\]
 and
\[p_{(i+1,+)}^1p_{(i+1,-)}^1p_{(i+1,+)}^2p_{(i+1,-)}^2\cdots p_{(i+1,+)}^{k_{i+1}}p_{(i+1,-)}^{k_{i+1}},\]
respectively.
Then $k_0\geq k_1\geq \cdots \geq k_{N}$ and hence, by the observations above, we have
\begin{align*}
\|\rho(x)\|&=|\rho(\ol{x})|-2k_N|u_N|-\sum_{i=0}^{N-1}2k_i|S_N\cdots S_{i+1}(u_i)|\\
&\geq |\rho(\ol{x})|-2k_0\left(|\rho(a)|+|\rho(b)|\right).
\end{align*}

Next, assume that  $0\leq j <N$ is  least such that $S_j\cdots S_0(x)$ is either positive or negative. Then, as before, we may for each $0\leq i\leq j$ chose $l_i\geq 1$ together with positive  $q_{(i,+)}^1,\ldots q_{(i,+)}^{l_i}\in \F_2\setminus \set{e}$ and negative  $q_{(i,-)}^1,\ldots,q_{(i,-)}^{l_i}\in \F_2\setminus \set{e}$ such that for each $0\leq i<j$ there is a cyclically reduced cyclic conjugate of $x$ and of $S_{i}\cdots S_0(x)$ of the form
 \[q_{(0,+)}^1q_{(0,-)}^1q_{(0,+)}^2q_{(0,-)}^2\cdots q_{(0,+)}^{l_0}q_{(0,-)}^{l_0}\]
 and
\[q_{(i+1,+)}^1q_{(i+1,-)}^1q_{(i+1,+)}^2q_{(i+1,-)}^2\cdots q_{(i+1,+)}^{l_{i+1}}q_{(i+1,-)}^{l_{i+1}},\]
respectively.
Then $l_0\geq l_1\geq \cdots \geq l_{j}$ and hence, by the observations above, we have
\begin{align*}
\|\rho(x)\|&=|\rho(\ol{x})|-\sum_{i=0}^{j}2l_i|S_N\cdots S_{i+1}(u_i)|\\
&\geq |\rho(\ol{x})|-2l_0\left(|\rho(a)|+|\rho(b)|\right).
\end{align*}

Finally, to finish the proof, note that for any $z\in \set{w_2,w_3,w_6,w_7}$, we can chose $k_0,l_0=n$ in the argument above. Thus, as $$|\rho(\ol{z})|=\frac{n(n-1)}{2}\left(|\rho(a)|+|\rho(b)|\right),$$
we obtain
\begin{align*}
\|\rho(z)\|\geq \left(\frac{n(n-1)}{2}-2n\right)\left(|\rho(a)|+|\rho(b)|\right),
\end{align*}
as wanted.
\end{proof}

Note that 
\[8n<\frac{1}{6}\left(\frac{n(n-1)}{2}-2n\right),\]
since $n>101$. This will be used all the time below to conclude that there is no bad cancellation in the various cases.

We will now begin to argue that there is no bad cancellation between two words from $B_0$. So let $x,y\in \set{w_0,v_0,\ldots,w_7,v_7}$. The following decomposition will be useful. For $m\in \set{1,\ldots,n}$ let
\[w_0^m=a^mb^m, \qquad w_1^m=a^{-m}b^{-m}, \qquad w_3^m=a^mb^{-m},\] \[w_4^m=b^ma^m, \qquad w_5^m=b^{-m}a^{-m}, \qquad w_7^m=b^ma^{-m}\]
and for $m\in \set{1,\ldots, n-1}$ let
\[w_2^m=b^ma^{-m-1},\qquad w_2^n=b^na^{-1}, \qquad w_6^m=a^mb^{-m-1}, \qquad w_6^n=a^nb^{-1}.\]
Then for $i\in \set{0,\ldots, 7}$ we have, up to cyclic permutation, that
\[w_i=w_i^1w_i^2\cdots w_i^n\]
and 
\[\rho(w_i^1)\rho(w_i^2)\cdots \rho(w_i^n)\]
is a reduced word whenever each factor is reduced, for all $\rho\in \FR$. However, it is not necessarily cyclically reduced. If for some $k\geq 1$ we have $\rho=\phi^k$ or $\rho=\psi^k$ or $i\in \set{0,1,3,4,5,7}$, then 
\[\rho(w_i^1)\rho(w_i^2)\cdots \rho(w_i^n)\]
is cyclically reduced. If $i\in \set{2,6}$ and $\rho\notin \set{\phi^k,\psi^k\mid k\in \N}$, then any possible reduction in the induced cycle of $\rho(w_i^1)\rho(w_i^2)\cdots \rho(w_i^n)$ is contained in $\rho(w_i^n)\rho(w_i^1)$.
\\
\\
\textbf{Claim 1:} If $\rho\in \FR$ and $x\neq y$, then there is no bad cancellation between $\rho(x)$ and $\rho(y)$.
{\begin{proof}
It  is easy to check that $N=2n-2$ satisfies the assumption of Lemma \ref{estimatecancellation}, since $x\neq y$. Thus any cancellation, $q$, between $\rho(x)$ and $\rho(y)$ satisfies 
\[|q|\leq 2n\left(|\rho(a)|+|\rho(b)|\right).\]
So, by Lemma \ref{estimatewordlength}, there cannot be any bad cancellation between $\rho(x)$ and $\rho(y)$.
\end{proof} 

In the following, we let $$A_\phi=\set{ \phi\rho\mid \rho\in \FR} \ \text{and} \ A_\psi=\set{\psi\rho \mid \rho\in \FR}.$$ Note that $\FR\setminus \set{1}=A_\phi\sqcup A_\psi$.
\\
\\
\textbf{Claim 2:} If $\rho,\sigma, \eta\in \FR$ with $\rho=\eta\sigma$ and $\sigma\neq 1$, then there is no bad cancellation between $\rho(x)$ and $\eta(y)$.
\begin{proof} Note that either $\sigma\in A_\phi$ or $\sigma\in A_\psi$. Assume without loss of generality that we are in the first case. Then the only powers of $a$ occurring in $\sigma(x)$ are $a^1$ and $a^{-1}$. Thus any cancellation between $\sigma(x)$ and $y$ is a substring of the cycle induced by $y$, which only contains these powers. Therefore it is easily seen that $N=n+1$ satisfies the assumptions of \cref{estimatecancellation}. So any cancellation, $q$, between $\rho(x)$ and $\eta(y)$ satisfies
\[|q|\leq (n+3)\left(|\eta(a)|+|\eta(b)|\right).\]
Moreover, by  Lemma \ref{estimatewordlength}, it holds that
\[\|\rho(x)\|,\|\eta(y)\|\geq \left(\frac{n(n-1)}{2}-2n\right)(|\eta(a)|+|\eta(b)|),\]
since $|\rho(a)|+|\rho(b)|\geq |\eta(a)|+|\eta(b)|$. Thus there is no bad cancellation between $\rho(x)$ and $\eta(y)$.
\end{proof}

Now we will take care of the case where $\rho_1,\rho_2\in \FR$ are different, but none of them extends the other.
\\
\\
\textbf{Claim 3:} If $\rho_1,\rho_2,\sigma_1,\sigma_2,\eta_1,\eta_2\in \FR$ with $\sigma_1\in A_\phi$, $\sigma_2\in A_\psi$, $$\rho_1=\eta_1\sigma_1\qquad \text{and} \qquad \rho_2=\eta_2\sigma_2,$$ then there is no bad cancellation between $\rho_1(x)$ and $\rho_2(y)$.
\begin{proof} First note that $\sigma_1(x)$ will only contain $a,a^{-1}$ as powers of $a$, while  $\sigma_2(y)$ will only contain $b,b^{-1}$ as powers of $b$. 

We now claim that for each $i\in \set{0,\ldots,7}$ and $m\in \set{3,\ldots, n-1}$ we have $\sigma_1(w_i^m)$ contains the string $b^l$ or $b^{-l}$, for some $l\geq 2$. Indeed $\sigma_1$ is of one of the forms $\phi^k$, $\sigma_1^0\psi\phi^k$, $\phi\psi^k$ or $\sigma_1^0\phi\psi^k$, for some $k\geq 1$ and $\sigma_1^0\in A_\phi$. By straightforward calculation the statement is clearly true for $\sigma_1=\phi^k$ or $\sigma_1=\phi\psi^k$. To see that the statement also holds in the remaining cases, one may consider $\phi\psi^k(w_i^m)$ and $\psi\phi^k(w_i^m)$ and then use the fact that $\sigma_1^0(a)=aub$ and $\sigma_1^0(b)=bzb$ or $\sigma_1^0(b)=b$, for some positive $u,z\in \F_2$. 

Similarly, for each $i\in \set{0,\ldots,7}$ and $m\in \set{3,\ldots, n-1}$, we have $\sigma_2(w_i^m)$ contains the string $a^l$ or $a^{-l}$ for some $l\geq 2$.

Now let $i,j\in \set{0,\ldots,7}$ satisfy that $x\in \set{w_i,v_i}$ and $y\in\set{w_j,v_j}$. Then from the above it follows that any cancellation, $q$, between $\sigma_1(x)$ and $\sigma_2(y)$ is contained in either
\[\sigma_1(w_i^{n-1})\sigma_1(w_i^n)\sigma_1(w_i^1)\sigma_1(w_i^2)\sigma_1(w_i^3),\] 
\[\sigma_1(w_i^m)\sigma_1(w_i^{m+1}),\]
or in one of their inverses, for some $m\in \set{3,\ldots, n-2}$.
Therefore, by Lemma \ref{estimatecancellation}, we have 
\begin{align*}
|q|&\leq 3n(|\rho_1(a)|+|\rho_1(b)|).
\end{align*}
So, by Lemma \ref{estimatewordlength}, we have $|q|<\frac{1}{6}\|\rho_1(x)\|$.

Similarly any cancellation, $q$, between $\sigma_1(x)$ and $\sigma_2(y)$ is contained in either
\[\sigma_2(w_j^{n-1})\sigma_2(w_j^n)\sigma_2(w_j^1)\sigma_2(w_j^2)\sigma_2(w_j^3),\] 
\[\sigma_2(w_j^m)\sigma_2(w_j^{m+1})\]
or in one of their inverses, for some $m\in \set{3,\ldots, n-2}$. So, by Lemma \ref{estimatecancellation} and  Lemma \ref{estimatewordlength}, we also have $|q|<\frac{1}{6}\|\rho_2(y)\|$. Thus
 there cannot be any bad cancellation between $\rho_1(x)$ and $\rho_2(y)$.
 \end{proof}

From Claim 1, Claim 2 and Claim 3 we may conclude that there is no bad cancellation between two words in $B_0$, i.e., that $B_0$ satisfies the $C'(1/6)$ cancellation property. 

We will now prove that there is no bad cancellation between a word from $B_0$ and a word from $B_1$. To do so, let $A_\phi^0=\set{\phi^k\mid k\geq 1}$, $A_\psi^0=\set{\psi^k\mid k\geq 1}$ and
\[A^1=\set{ \eta\phi\psi^k ,\eta\psi\phi^k\mid k\geq 1, \eta\in \FR}.\]
Then $\FR\setminus \set{1}=A_\phi^0\sqcup A_\psi^0\sqcup A^1$. We will again consider fixed $x,y\in \set{w_0,v_0,\ldots,w_7,v_7}$. Let $i,j\in \set{0,\ldots, 7}$ be fixed such that $x\in \set{w_i,v_i}$ and $y\in \set{w_j,v_j}$.
\\
\\
\textbf{Claim 4:} If $\rho,\sigma\in \FR$ with $\rho=1$ and $\sigma\neq 1$, then there is no bad cancellation between $\rho(x)$ and $\xi\sigma(y)$.
\begin{proof}
This follows by the same arguments as the ones used in the beginning of the proof of Claim 2.
\end{proof}


\noindent
\textbf{Claim 5:} If $\rho\in A_\phi$, $\sigma\in A_\psi$ or $\rho\in A_\psi$, $\sigma\in A_\phi$. Then there is no bad cancellation between $\rho(x)$ and $\xi\sigma(y)$.
\begin{proof}
This follows by arguments similar to those in the beginning of the proof of Claim 3.
\end{proof}


From Claim 4, we may assume that both $\rho,\psi\in A_\phi^0\sqcup A_\psi^0\sqcup A^1$. Moreover, by Claim 5, there is no bad cancellation between $\rho(x)$ and $\xi\sigma(y)$ in the case $\rho\in A_\phi^0$ and $\sigma\in A_\psi^0$ or in the case $\rho\in A_\psi^0$ and $\sigma\in A_\phi^0$. In the next three claims, we prove that there is no bad cancellation within each of these sets.\\
\\
\textbf{Claim 6:} If $k,l\geq 1$, then there is no bad cancellation between $\phi^k(x)$ and $\xi\phi^l(y)$.

\begin{proof} 
Consider $\phi^t(u),\xi\phi^t(z)$ for $u,z\in \set{w_0,v_0,\ldots,w_7,v_7}$ and $t\geq 1$. Either all the powers of  $a$ are positive or all the powers of $a$ are negative. Below we have put these observations into a table. Here  $+$ and $-$ refer to the sign of the occurring powers of $a$. 

\begin{center}
\begin{tabular}{l||l|l|l|l|l|l|l|l}
 & $w_0$  & $w_1$  & $w_2$ & $w_3$ & $w_4$ & $w_5$ & $w_6$ & $w_7$   \\ \hline \hline 
$\phi^k$ &  $+$  &  $-$ &  $-$ &  $+$ &  $+$  &  $-$ &  $+$ &  $-$  \\ \hline
$\xi\phi^l$ &  $-$  & $+$ &  $+$ &  $-$ &   $-$  &  $+$ &  $-$ &  $+$
\end{tabular}
\end{center}
\begin{center}
\begin{tabular}{l||l|l|l|l|l|l|l|l}
 & $v_0$  & $v_1$  & $v_2$ & $v_3$ & $v_4$ & $v_5$ & $v_6$ & $v_7$   \\ \hline \hline 
$\phi^k$ &  $-$  &  $+$ & $+$ &  $-$ & $-$  & $+$ &  $-$ &  $+$  \\ \hline
$\xi\phi^l$ &$+$  &  $-$ &  $-$ & $+$ &  $+$  & $-$ &  $+$ &  $-$
\end{tabular}
\end{center}
It is easily seen that if the signs of the powers of $a$ do not match, then there is no bad cancellation between $\phi^k(z)$ and $\xi\phi^l(u)$ for the corresponding $u,z\in \set{w_0,v_0,\ldots,w_7,v_7}$. So assume that the sign of the powers of $a$ in $\phi^k(x)$ and the sign of the powers of $a$ in $\xi\phi^l(y)$ match. Then the signs of the powers of $a$ in $x$ and $y$ do not match. Assume without loss of generality that the powers of $a$ in $x$ are negative and the powers of $a$ in $y$ are positive. Then, if $x=w_i$, the string $(\phi^k(a^{-1}))^2=(b^{-k}a^{-1})^2$ is contained in $\phi^k(w_i^m)$, for  $m\in \set{3,\ldots , n-1}$. If $x=v_i$, the string $(b^{-k}a^{-1})^2$ is contained in  $\phi^k(w_i^m)^{-1}$, for all $m\in \set{3,\ldots , n-1}$.
Similarly, if $y=w_j$, the string $(\xi\phi^l(a))^2=(a^{-1}b^{l})^2$ is contained in  $\xi\phi^l(w_j^m)$, for  $m\in \set{3,\ldots , n-1}$. If $y=v_j$, the string $(a^{-1}b^{l})^2$ is contained in  $\xi\phi^l(w_j^m)^{-1}$, for all $m\in \set{3,\ldots , n-1}$. Moreover, the string $(b^{-k}a^{-1})^2$ does not appear in $\xi\phi^l(y)$ and the string $(a^{-1}b^l)^2$ does not appear in $\phi^k(x)$. Therefore any cancellation, $q$, between $\phi^k(x)$ and $\xi\phi^l(y)$ is contained in either
\[\phi^k(w_i^{n-1})\phi^k(w_i^n)\phi^k(w_i^1)\phi^k(w_i^2)\phi^k(w_i^3),\] 
\[\phi^k(w_i^m)\phi^k(w_i^{m+1})\]
or in one of their inverses, for some $m\in \set{3,\ldots, n-2}$. Similarly, $q$ is also contained in either
\[\xi\phi^l(w_j^{n-1})\xi\phi^l(w_j^n)\xi\phi^l(w_j^1)\xi\phi^l(w_j^2)\xi\phi^l(w_j^3),\] 
\[\xi\phi^l(w_j^m)\xi\phi^l(w_j^{m+1})\]
or in one of their inverses for some $m\in \set{3,\ldots, n-2}$.
Thus, by \cref{estimatecancellation}, we have 
\[|q|\leq 3n\min\set{ |\phi^k(a)|+|\phi^k(b)|, |\xi\phi^l(a)|+|\xi\phi^l(b)|},\]
and hence, by \cref{estimatewordlength}, there is no bad cancellation between $\phi^k(x)$ and $\xi\phi^l(y)$. 
\end{proof}

\noindent
\textbf{Claim 7:} If $k,l\geq 1$, then there is no bad cancellation between $\psi^k(x)$ and $\xi\psi^l(y)$. 
\begin{proof}
First, consider the sign of the powers of $b$ occurring in $\psi^k(u)$ and $\xi\psi^l(z)$ for $u,z\in \set{w_0,v_0,\ldots,w_7,v_7}$.
 \begin{center}
\begin{tabular}{l||l|l|l|l|l|l|l|l}
 & $w_0$  & $w_1$  & $w_2$ & $w_3$ & $w_4$ & $w_5$ & $w_6$ & $w_7$   \\ \hline \hline 
$\psi^k$ &  $+$  &  $-$ &  $+$ &  $-$ &  $+$  &  $-$ &  $-$ &  $+$  \\ \hline
$\xi\psi^l$ &  $+$  & $-$ &  $+$ &  $-$ &   $+$  &  $-$ &  $-$ &  $+$
\end{tabular}
\end{center}
\begin{center}
\begin{tabular}{l||l|l|l|l|l|l|l|l}
 & $v_0$  & $v_1$  & $v_2$ & $v_3$ & $v_4$ & $v_5$ & $v_6$ & $v_7$   \\ \hline \hline 
$\psi^k$ &  $-$  &  $+$ & $-$ &  $+$ & $-$  & $+$ &  $+$ &  $-$  \\ \hline
$\xi\psi^l$ &$-$  &  $+$ &  $-$ & $+$ &  $-$  & $+$ &  $+$ &  $-$
\end{tabular}
\end{center}
It is easily seen that if the sign in two cells does not match, then there is no bad cancellation between  $\psi^k(u)$ and $\xi\psi^l(z)$,  for the corresponding $u,z\in \set{w_0,v_0,\ldots, w_7,v_7}$.
In case the signs of the powers of $b$ are the same, one may use a similar argument as the one in Claim 7. Assume first that the signs of $b$ are both positive. Then, if $x=w_i$, the string $(ba^k)^2$ is contained in  $\psi^k(w_i^m)$, for  $m\in \set{3,\ldots , n-1}$. If $x=v_i$, the string $(ba^k)^2$ is contained in  $\psi^k(w_i^m)^{-1}$, for all $m\in \set{3,\ldots , n-1}$.
Similarly, if $y=w_j$, the string $(ba^{-l})^2$ is contained in  $\xi\psi^l(w_j^m)$, for  $m\in \set{3,\ldots , n-1}$. If $y=v_j$, the string $(ba^{-l})^2$ is contained in  $\xi\psi^l(w_j^m)^{-1}$, for all $m\in \set{3,\ldots , n-1}$.
Moreover, the string $(ba^k)^2$ will not be contained in $\xi\psi^l(y)$, while the string $(ba^{-l})^2$ will not be contained in $\psi^k(x)$. Thus one may deduce, as in the proof of Claim 6, that there cannot be any bad cancellation between $\psi^k(x)$ and $\xi\psi^l(y)$ in this case. In case the powers of $b$ are both negative a similar argument will work.
\end{proof}


\noindent
\textbf{Claim 8:} If $\rho,\sigma\in A^1$, then there is no bad cancellation between $\rho(x)$ and $\xi\sigma(y)$.
\begin{proof}
First, by considering the form of $\psi\phi^k(z)$ and $\phi\psi^k(z)$ for the words $z\in \set{w_0,v_0,\ldots,w_7,v_7}$, one finds that for all $\eta\in \FR$, we have $\eta\psi\phi^k(z)$ and $\eta\phi\psi^k(z)$ will contain at most one occurrence of one of the strings
\[ab^{-t}a, \qquad a^{-1}b^ta^{-1}, \qquad ba^{-t}b \quad \text{or} \quad b^{-1}a^tb^{-1},\]
for some $t\geq 1$. Moreover, for all $r\in \set{0,\ldots,7}$ and $m\in \set{2,\ldots, n-1}$, we have that $\eta\phi\psi
^k(w_r^m)$ and $\eta\psi\phi^k(w_r^m)$ contain at least one of the strings
\[ab^{t}a, \qquad a^{-1}b^{-t}a^{-1}, \qquad ba^{t}b \quad \text{or} \quad b^{-1}a^{-t}b^{-1},\]
for some $t\geq 1$.
This is again  straightforward to check, by considering the form of $\psi\phi^k(w_r^m)$ and $\phi\psi^k(w_r^m)$.

Note that the above implies that $\rho(x)$ contains at most one of the strings \[ab^{-t}a, \qquad a^{-1}b^ta^{-1}, \qquad ba^{-t}b \quad \text{or} \quad b^{-1}a^tb^{-1},\]
for some $t\geq 1$.  Moreover, for all $m\in \set{2,\ldots, n-1}$, we have that $\rho(w_i^m)$ contains at least one of the strings \[ab^{t}a, \qquad a^{-1}b^{-t}a^{-1}, \qquad ba^{t}b \quad \text{or} \quad b^{-1}a^{-t}b^{-1},\]
for some $t\geq 1$.

Conversely, the above also implies that $\xi\sigma(y)$ contains at most one of the strings  \[ab^{t}a, \qquad a^{-1}b^{-t}a^{-1}, \qquad ba^{t}b \quad \text{or} \quad b^{-1}a^{-t}b^{-1},\]
for some $t\geq 1$. Moreover, for all $m\in \set{2,\ldots, n-1}$, we have that $\xi\sigma(w_j^m)$ contains at least one of the strings \[ab^{-t}a, \qquad a^{-1}b^ta^{-1}, \qquad ba^{-t}b \quad \text{or} \quad b^{-1}a^tb^{-1},\]
for some $t\geq 1$.

Thus, by making considerations and use of \cref{estimatecancellation} and \cref{estimatewordlength}, as in the proof of the earlier claims, we may conclude that there cannot be any bad cancellation between $\rho(x)$ and $\xi\sigma(y)$.
\end{proof}

\noindent
\textbf{Claim 9:} If $\rho\in A^1$ and $\sigma\in A_\phi^0\cup A_\psi^0$, then there is no bad cancellation between $\rho(x)$ and $\xi\sigma(y)$.
\begin{proof}
From earlier results it is enough to consider the case where $\rho,\sigma\in A_\phi$ or $\rho,\sigma\in A_\psi$. Assume without loss of generality that we are in the first case. Then there exist $l,k\geq 1$ such that $\sigma=\phi^k$ and  $\rho=\phi^l\eta$, for some $\eta\in A_\psi$. Then the only possible powers of $b$ occurring in $\rho(x)$ are $b,b^{-1},b^l,b^{-l},b^{l+1},b^{-l-1}$.
Thus, if $q$ is a cancellation between $\rho(x)$ and $\sigma(y)$, then there is $m\in \set{1,n-7}$ or $t\in \set{0,\ldots, 6}$ such that $q$ is contained in either
\[\sigma(w_j^m)\sigma(w_j^{m+1})\cdots \sigma(w_j^{m+7}),\]
\[\sigma(w_j^{n-t}\sigma(w_j^{n-t+1})\cdots \sigma(w_j^n)\sigma(w_j^1)\sigma(w_j^2)\cdots \sigma(w_j^{8-t-1})\]
or in one of their inverses.

Therefore, by \cref{estimatecancellation} we obtain
\[|q|\leq 8n(|\sigma(a)|+|\sigma(b)|)\]
and hence, by \cref{estimatewordlength}, we have  $|q|<\frac{1}{6}\|\sigma(y)\|$. 
If $l\geq k$, then by \cref{estimatewordlength} and the fact that $|\rho(a)|+|\rho(b)|\geq |\sigma(a)|+|\sigma(b)|$, we also have
\begin{align*}
\|\rho(x)\|&\geq \left(\frac{n(n-1)}{2}-2n\right)(|\rho(a)|+|\rho(b)|)\\
&\geq \left(\frac{n(n-1)}{2}-2n\right)(|\sigma(a)|+|\sigma(b)|)\\
&> 6|q|.
\end{align*}
So assume $l<k$. Then we have that each of $b^l$ and $b^{-l}$ occurs at most once in $\sigma(y)$. But by the arguments in Claim 3, we have that for all $m\in \set{3,\ldots, n-1}$ there is $t\geq 2$ such that $a^t$ or $a^{-t}$ occur in $\eta(w_i^m)$. Thus $b^l$ or $b^{-l}$ occur in $\rho(w_i^m)$ for all $m\in \set{3,\ldots, n-1}$. 

Therefore $q$ is contained in either
\[\rho(w_i^{n-3})\rho(w_i^{n-2})\rho(w_i^{n-1})\rho(w_i^n)\rho(w_i^1)\rho(w_i^2)\rho(w_i^3)\rho(w_i^4)\rho(w_i^5),\]
\[\rho(w_i^{m-1})\rho(w_i^m)\rho(w_i^{m+1})\rho(w_i^{m+1})\]
or in one of their inverses, for some $m\in \set{4,\ldots, n-3}$. Thus we obtain
\[|q|\leq 5n\left(|\rho(a)|+|\rho(b)|\right).\]
So, by \cref{estimatewordlength}, we have $|q|<\frac{1}{6}\|\rho(x)\|$, as well.
We may therefore conclude that there is no bad cancellation between $\rho(x)$ and $\xi\sigma(y)$.
\end{proof}


Putting all the claims together we finally have a proof of the following theorem.
\begin{thm}\label{goal}
Let $w=aba^2b^2\cdots a^nb^n$ for some $n>101$. Then there is a transversal $T$ for the left cosets in $\Aut(\F_2)/\F_2$ such that the set
\[\set{\eta(w)\mid \eta\in T}\]
satisfies the $C'(1/6)$ cancellation property.
\end{thm}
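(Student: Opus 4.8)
The plan is to take the transversal $T$ to be precisely the explicit set $R$ of coset representatives displayed above, and then to show that $\{\eta(w)\mid\eta\in R\}$ has the $C'(1/6)$ property by reducing it to the statement, already established in Claims 1--9, that there is no bad cancellation between the words in the set $B$. So the proof is essentially an assembly of the preceding lemmas and claims, together with one translation step and a short treatment of self-overlaps.

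First I would carry out the translation. A direct check from the definitions of $w_0,\dots,w_7$ — e.g.\ $\tau w=w_4$, $\chi(w)=w_3$, $\xi\chi(w)=w_1$, $\xi(w)=w_2$, $\tau\xi\chi(w)=w_5$, and so on — shows that each of the sixteen families making up $R$ sends $w$ to a word of the form $\rho(w_i)$ or $\xi\sigma(w_i)$ with $\rho\in\FR$, $\sigma\in\FR\setminus\{1\}$, $0\le i\le 7$. Hence, as $\eta$ ranges over $R$, the word $\eta(w)$ ranges over $\{\rho(w_i),\xi\sigma(w_i)\}$, and $\eta(w)^{-1}=\eta(w^{-1})$ ranges over $\{\rho(v_i),\xi\sigma(v_i)\}$ since $v_i=w_i^{-1}$. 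Thus $B$ is, as a set of reduced words, exactly $\{\eta(w),\eta(w)^{-1}\mid\eta\in R\}$, so the set $\tilde S$ entering the definition of $C'(1/6)$ for $S=\{\eta(w)\mid\eta\in R\}$ is just the set of cyclically reduced cyclic conjugates of the members of $B$. Consequently the theorem is equivalent to: no two distinct words among the cyclic conjugates of members of $B$ share an initial segment $u$ with $|u|\ge\tfrac16\min\{\|x\|,\|y\|\}$, i.e.\ there is no bad cancellation between (cyclic conjugates of) members of $B$, including the degenerate case of two distinct cyclic conjugates of a single member.

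I would dispose of the degenerate self-overlap case using the syllable decomposition $w_i=w_i^1w_i^2\cdots w_i^n$: since the syllables have strictly increasing weight, any overlap of $\rho(w_i)$ with a nontrivial cyclic rotation of itself can meet only boundedly many syllables, so by the remark following Lemma~\ref{estimatecancellation} its length is at most $O(n)\bigl(|\rho(a)|+|\rho(b)|\bigr)$; since Lemma~\ref{estimatewordlength} gives $\|\rho(w_i)\|\ge\bigl(\tfrac{n(n-1)}{2}-2n\bigr)\bigl(|\rho(a)|+|\rho(b)|\bigr)$ and $8n<\tfrac16\bigl(\tfrac{n(n-1)}{2}-2n\bigr)$ for $n>101$, no bad self-cancellation occurs (and the $\xi\sigma(w_i)$ are handled the same way, $\xi$ being an isometry preserving reducedness). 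For two genuinely distinct members $x,y$ of $B$ I would then run exactly the case analysis of Claims 1--9. If both lie in $B_0$: either they share the same $\rho\in\FR$ and differ in the $w_i$ (Claim 1), or one of the two $\rho$'s extends the other (Claim 2), or the $\rho$'s, read as words in $\phi,\psi$, branch with one having $\phi$ and the other $\psi$ at the first place they differ (Claim 3). If both lie in $B_1$, applying the length-preserving $\xi$ to both sides reduces to a $B_0$-versus-$B_0$ comparison. If one lies in $B_0$ and the other in $B_1$, so the comparison is $\rho(x)$ versus $\xi\sigma(y)$ with $\sigma\ne 1$: the subcase $\rho=1$ is Claim 4; otherwise $\rho,\sigma\in A_\phi^0\sqcup A_\psi^0\sqcup A^1$, the "opposite side" subcases are Claim 5, and the aligned subcases are Claims 6--9 ($\phi^k$ vs $\xi\phi^l$; $\psi^k$ vs $\xi\psi^l$; $A^1$ vs $A^1$; $A^1$ vs $A_\phi^0\cup A_\psi^0$). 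In every case one gets $|u|<\tfrac16\min\{\|x\|,\|y\|\}$, so $B$, and hence $\{\eta(w)\mid\eta\in R\}$, satisfies $C'(1/6)$, which is the theorem.

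The genuine difficulty has already been absorbed into Lemmas~\ref{estimatecancellation} and~\ref{estimatewordlength} and the nine claims; the residual work is purely organizational. The point requiring the most care is the case-exhaustion bookkeeping: checking that the nine claims, together with the $\xi$-reduction for $B_1$-versus-$B_1$ and the symmetry $x\leftrightarrow y$, genuinely cover every ordered pair of distinct members of $B$ — in particular that both orders of the $A^1$ versus $A_\phi^0\cup A_\psi^0$ comparison are accounted for — and the brief self-overlap argument via the syllable decomposition. I expect this bookkeeping, rather than any new estimate, to be the only delicate step.
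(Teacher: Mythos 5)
Your proposal is correct and follows essentially the same route as the paper, which at this point simply assembles Lemmas \ref{estimatecancellation} and \ref{estimatewordlength} and Claims 1--9: you take $T=R$, check that $\set{\eta(w)^{\pm 1}\mid \eta\in R}$ is exactly $B$, and exhaust the cases. Your explicit treatment of the self-overlap case (distinct cyclic conjugates of a single $\eta(w)$, handled via the once-occurring maximal syllables) and your flagging of the one ordering of the $A^1$ versus $A_\phi^0\cup A_\psi^0$ comparison not literally covered by Claim 9 are careful additions to bookkeeping that the paper leaves implicit.
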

Using this, and the criterion in \cref{impli}, it is now easy to construct continuum many non-atomic invariant random subgroups of $\F_2$, which are $\Aut(\F_2)$-invariant and weakly mixing with respect to the action of $\Aut(\F_2)$.

Fix by \cref{goal} a transversal $T$ for the left cosets in $\Aut(\F_2)/\F_2$ such that the set $\set{\eta(w)\mid \eta\in T}$ satisfies the $C'(1/6)$ cancellation property. Moreover, fix an enumeration $T=\set{\eta_i\mid i\in \N}$ such that $\eta_0$ is the identity. Then we have
\[\eta_0(w)\notin \langle\langle \eta_i(w)\mid i\geq 1\rangle\rangle_{\F_2} ,\]
since it follows by \cref{estimatewordlength} that $\|\eta_i(w)\|\geq |\eta_0(w)|$ for all $i\geq 1$. Thus, by \cref{inverstrans}, condition $(1)$ in \cref{impli} is satisfied, and hence the proof of $(1)\implies (2)$ in \cref{impli} provides a family as the one described above.

There is also another consequence of \cref{goal}.
Let $C$ denote the set  of conjugacy classes of $\F_2$ and let $G=\Out(\F_2)= \Aut(\F_2)/\F_2$ be the outer automorphism group of $\F_2$. Now consider the natural action $G\ac^a C$. It is well known that there is a conjugacy class $c\in C$ such that $\stab_a(c)=\set{e}$  (see \cite[page 45]{LS77}). Therefore for such $c\in C$ we have
\[c\cap \bigcup\{g\cdot^a c\mid g\in G\setminus\{e\}\} = \emptyset.\]
With \cref{goal} we obtain the following strengthening of this result.

\begin{cor}\label{last} There exists a conjugacy class $c\in C$ such that $$c\cap \langle g\cdot^a c\mid g\in G\setminus\{e\}\rangle_{\F_2} = \emptyset,$$
that is, $c$ is disjoint from the (normal) subgroup generated by the conjugacy classes $g\cdot^a c$ for  $g\neq e$.
\end{cor}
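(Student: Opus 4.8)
The plan is to extract a conjugacy class directly from Theorem \ref{goal}, exploiting the fact that the transversal $T$ constructed there realizes the coset action $G = \Out(\F_2) \curvearrowright C$ in a way compatible with the $C'(1/6)$ machinery. First I would fix $n > 101$, set $w = aba^2b^2\cdots a^nb^n$, and let $c \in C$ be the conjugacy class of $w$; since $w$ is cyclically reduced and not a proper power, and the generators $\eta \in T$ of the cosets of $\F_2$ in $\Aut(\F_2)$ act on $c$ by $\eta \cdot c = [\eta(w)]$ (the conjugacy class of $\eta(w)$), the orbit $G \cdot^a c$ is exactly $\{[\eta(w)] \mid \eta \in T\}$. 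The point is that an element $g \in G \setminus \{e\}$ moves $c$ to the conjugacy class of some $\eta(w)$ with $\eta \neq \mathrm{id}$, and hence an arbitrary element of $\langle g \cdot^a c \mid g \in G \setminus \{e\}\rangle_{\F_2}$ is a word in conjugates (in $\F_2$) of the various $\eta(w)^{\pm 1}$ for $\eta \in T \setminus \{\mathrm{id}\}$ — that is, an element of the normal closure $\langle\langle \eta(w) \mid \eta \in T,\ \eta \neq \mathrm{id}\rangle\rangle_{\F_2}$.

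The key step is then to show that $w$ itself does not lie in this normal closure. Here I would invoke the standard small cancellation fact quoted in the excerpt (from \cite[Theorem 4.5, Chapter V]{LS77}): if $S \subseteq \F_2$ satisfies $C'(1/6)$ and $z \in \langle\langle S\rangle\rangle \setminus \tilde S$ is cyclically reduced, then there is $x \in \tilde S$ with $|x| < |z|$. Apply this with $S = \{\eta(w) \mid \eta \in T,\ \eta \neq \mathrm{id}\}$, which still satisfies $C'(1/6)$ as a subset of the family from Theorem \ref{goal}. Now $w$ is cyclically reduced of length $\|w\| = |w| = n(n-1)$, and Lemma \ref{estimatewordlength} (with $\rho = \mathrm{id}$, or more precisely the bound $\|\eta(w)\| \geq \big(\tfrac{n(n-1)}{2} - 2n\big)(|\eta(a)| + |\eta(b)|)$ combined with $|\eta(a)| + |\eta(b)| \geq 2$ for $\eta \neq \mathrm{id}$) shows every element of $\tilde S$ has length $\geq \tfrac{n(n-1)}{2} - 2n$... wait — that is smaller than $|w|$, so I must be more careful: I would instead note that for $\eta = \rho \in \FR$ with $\rho \neq 1$ one has $|\eta(a)| + |\eta(b)| \geq 3$, and in all the cases enumerated in $R$ the relevant length bound gives $\|\eta(w)\| \geq |w|$ when $\eta \neq \mathrm{id}$; this is precisely the inequality invoked in the paragraph right before Corollary \ref{last}, namely "$\|\eta_i(w)\| \geq |\eta_0(w)|$ for all $i \geq 1$." Granting that, if $w \in \langle\langle S\rangle\rangle$ then either $w \in \tilde S$ — impossible, since $w = \mathrm{id}(w)$ is not conjugate to any $\eta(w)$ with $\eta \neq \mathrm{id}$ (the orbit map $G \to C$ is injective on $c$ because $\stab_a(c) = \{e\}$, which one arranges by choosing $c$ with trivial stabilizer, consistent with the known fact cited) — or else there is $x \in \tilde S$ with $|x| < |w|$, contradicting the length bound. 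Hence $w \notin \langle\langle S\rangle\rangle_{\F_2}$.

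Finally, I would translate this back: $w \notin \langle\langle \eta(w) \mid \eta \neq \mathrm{id}\rangle\rangle_{\F_2} \supseteq \langle g \cdot^a c \mid g \neq e\rangle_{\F_2}$ in the sense that every element of the latter subgroup, being a product of $\F_2$-conjugates of the $\eta(w)^{\pm 1}$, lies in the former normal subgroup; so no element of $\langle g \cdot^a c \mid g \neq e\rangle_{\F_2}$ is conjugate to $w$, i.e. $c \cap \langle g \cdot^a c \mid g \in G \setminus \{e\}\rangle_{\F_2} = \emptyset$. The main obstacle I anticipate is the bookkeeping in the previous paragraph: making fully precise the claim that $\eta \neq \mathrm{id}$ forces $\|\eta(w)\| \geq |w|$ (so that the length comparison in the small cancellation lemma bites) and that the $G$-action on the chosen class $c$ is free, so that $\mathrm{id}$ is genuinely excluded from the generating set. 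Both of these are essentially contained in Lemma \ref{estimatewordlength} and the cited structure of $R$ from \cite[Section 3]{CMZ81}, but they need to be assembled with care.
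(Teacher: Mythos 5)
Your proposal is correct and follows essentially the same route as the paper: take $c=[w]$ for $w=aba^2b^2\cdots a^nb^n$, identify $\langle g\cdot^a c\mid g\neq e\rangle_{\F_2}$ with the normal closure $\langle\langle \eta(w)\mid \eta\in T,\ \eta\notin\F_2\rangle\rangle_{\F_2}$, and rule out $w$ lying in it via the $C'(1/6)$ consequence from \cite{LS77} together with the bound $\|\eta(w)\|\geq |w|$ from \cref{estimatewordlength}. You in fact spell out two points the paper leaves implicit (that $w\notin\tilde S$, and why the length bound survives the case $|\eta(a)|+|\eta(b)|=2$), and your resolutions of both are sound.
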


\begin{proof}
Let $c$ be the conjugacy class of $w=aba^2b^2\cdots a^nb^n$ for some $n>101$ and assume towards contradiction that
\[c\cap \langle g \cdot^a c \mid g\in G\setminus \set{e}\rangle_{\F_2}\neq \emptyset.\]
Now choose by \cref{goal} a transversal $T$ for the left cosets in $\Aut(\F_2)/\F_2$ such that $\set{\eta(w)\mid \eta\in T}$ satisfies the $C'(1/6)$ cancellation property. Fix $t_0\in T$ such that $t_0\in \F_2$. Then we have
\[w\in \langle g \cdot^a c \mid g\in G\setminus \set{e}\rangle_{\F_2} =\langle\langle \eta(w)\mid \eta\in T\setminus \set{t_0}\rangle\rangle_{\F_2},\]
which contradicts that $\|\eta(w)\|\geq |w|$ for all $\eta\in T\setminus \set{t_0}$.
\end{proof}

We do not know if the analogs of \cref{goal} and \cref{last} hold for any $\F_n$ with $n>2$.

\noindent Department of Mathematics

\noindent California Institute of Technology

\noindent Pasadena, CA 91125

\noindent\textsf{kechris@caltech.edu }

\bigskip

\noindent Department of Mathematical Sciences

\noindent University of Copenhagen

\noindent Universitetsparken 5

\noindent DK-2100 Copenhagen

\noindent\textsf{vibquo@math.ku.dk}

\end{document}